\titleformat{\section}[hang]{ \bf}
  {\thesection}{0.5em}{\large}
\titleformat{\subsection}[hang]{\bf}
{\thesubsection}{0.5em}{}
\newtheorem{proposition}{Proposition}[section]
\newtheorem{theorem}[proposition]{Theorem}
\newtheorem{corollary}[proposition]{Corollary}
\newtheorem{lemma}[proposition]{Lemma}
\newtheorem{definition}[proposition]{Definition}
\newtheorem{assumption}[proposition]{Assumption}
\newtheorem{example}[proposition]{Example}
\newtheorem{remark}[proposition]{Remark}
\DeclareMathOperator{\Range}{Im}
\newcommand{\skalp}[1]{\langle #1\rangle}
\def\argmin{ \mathop{{\rm argmin}}}
\newcommand{\epi}{\mathrm{epi}\,}
\newcommand{\p}{\partial}
\newcommand{\xb}{\bar x}
\newcommand{\mv}{\, \vert \,}
\newcommand{\R}{\mathbb{R}}
\newcommand{\rbar}{\overline{\mathbb R}}
\newcommand{\bB}{\mathbb{B}}
\newcommand{\N}{\mathbb{N}}
\newcommand{\ip}[2]{\left\langle #1, #2\right\rangle}
\newcommand{\norm}[1]{\left\Vert #1\right\Vert}
\newcommand{\set}[2]{\left\{#1\,\left\vert\; #2\right.\right\}}
\newcommand{\dist}{{\rm d}}
\newcommand{\bx}{{\bar{x}}}
\def\eps{\epsilon}
\newcommand{\cX}{\mathcal{X}}
\newcommand{\AND}{\ \mbox{ and }\ }
\newcommand{\st}{\ \mbox{s.t.}\ }
\newcommand{\mli}{\delta}
\begin{document}

\title[Sufficient conditions for metric subregularity]{Sufficient conditions for metric subregularity of constraint systems with applications to disjunctive and ortho-disjunctive programs}


\author{Mat\'{u}\v{s} Benko$^{\dag,\ddag}$}
\address{}
\curraddr{}
\email{}
\thanks{$^\dag$Institute of Computational Mathematics, Johannes Kepler University Linz,
A-4040 Linz, Austria, e-mail: benko@numa.uni-linz.ac.at\\
\indent $^\ddag$Faculty of Mathematics, University of Vienna, 1090 Vienna,
Austria}

\author{Michal \v{C}ervinka$^{\flat,\sharp}$}
\address{}
\curraddr{}
\email{}
\thanks{$^\flat$Institute of Economic Studies, Faculty of Social Sciences, Charles University, Opletalova 26, 110 00, Prague 1, Czech Republic, e-mail: michal.cervinka@fsv.cuni.cz\\
\indent $^\sharp$Institute of Information Theory and Automation, Czech Academy of Sciences, Pod Vodarenskou vezi 4, 180 00 Prague 8, Czech Republic, e-mail: cervinka@utia.cas.cz}

\author{Tim Hoheisel$^\diamond$}
\address{}
\curraddr{}
\email{}
\thanks{$^\diamond$Institute of Mathematics and Statistics, McGill University, 805 Sherbrooke St West, Room 1114 
Montr\'eal, Qu\'ebec, Canada H3A 0B9}


\keywords{metric subregularity, error bound property, pseudo-/quasi-normality, MPCC, MPVC, 
disjunctive programs, ortho-disjunctive programs}

\date{\today}

\begin{abstract}
This paper is devoted to the study of the {\em metric subregularity constraint qualification (MSCQ)} for general optimization problems,
with the emphasis on the nonconvex setting.
We elaborate on notions of {\em directional pseudo- and quasi-normality},
recently introduced by Bai et al. (SIAM J. Opt., 2019), which combine the standard approach
via pseudo- and quasi-normality with modern tools of directional variational analysis. 
We focus on applications to {\em disjunctive programs}, where (directional) pseudo-normality is characterized via an  extremal condition.
This, in turn, yields efficient tools to verify pseudo-normality and MSCQ,
which include, but are not limited to,
Robinson's  result on polyhedral multifunctions and  Gfrerer's
{\em second-order sufficient condition for metric subregularity}.
Finally, we refine our study by defining the new class of {\em ortho-disjunctive programs} which comprises prominent optimization problems
such as {\em mathematical programs with complementarity, vanishing or switching constraints.}
\end{abstract}

\maketitle

\section{Introduction}\label{sec:Intro}

In this paper we study {\em constraint qualifications (CQs)}
for a general mathematical program (GMP) given by
\begin{equation}\label{eq:GMP}
\min\limits_{x \in \mathbb{R}^n} \, f(x) \quad  \st \quad x \in F^{-1}(\Gamma)=:\cX,
\end{equation}
where $f:\R^n\to \R$ and $F:\R^n\to \R^d$ are continuously differentiable and $\Gamma \subset \R^d$ is closed.
Constraint qualification are regularity conditions on the feasible set of an optimization problem and  play a crucial role  for  stationarity and optimality conditions, sensitivity analysis or exact penalization,  as well as the convergence analysis of numerical algorithms.

At the center of our attention is the  {\em metric subregularity constraint qualification} (MSCQ).
Known also under other monikers such as {\em error bound property} or
{\em calmness constraint qualification}
(in general, calmness is equivalent to metric subregularity of
the inverse mapping),
MSCQ is, to the best of our knowledge,
the weakest known CQ to ensure the full calculus for (limiting) normal cones and subdifferentials, see \cite{HenJouOut02,IoOut08}.
In particular, MSCQ guarantees that local minimizers of \eqref{eq:GMP} are  {\em Mordukhovich (M)-stationary} \cite{HenJouOut02}.
Moreover, MSCQ also yields  {\em exactness} of the penalty function
\begin{equation}\label{eq:PenaltyFunc}
P_\alpha:=f+\alpha \dist_\Gamma\circ F\quad (\alpha>0),
\end{equation}
see, e.g., \cite{Bu91, Bu91.2, Cla 83,KK02a}, which is an important tool for establishing  necessary optimality conditions, as well as for numerical methods \cite{Bu91.2}.

Apart from the area of optimality conditions and exact penalization, MSCQ turns out to be essential also in second-order
variational analysis and closely related areas of stability and sensitivity,
cf. e.g., \cite{BeGfrMor18,BeGfrOut18b,GO3,GfrMo17} and references therein.

The main drawback of MSCQ is the difficulty with efficient verification
of this property.
There exist two main approaches to ensure MSCQ.
The first one makes use of the stronger property of {\em metric regularity},
which is closely related to other concepts such as the {\em Aubin property},
{\em (generalized) Mangasarian-Fromovitz constraint qualification (GMFCQ)},
{\em no nonzero abnormal multiplier constraint qualification (NNAMCQ)}.
Metric regularity can be characterized via co-derivatives \cite[Theorem 9.40]{RoW 98} (known as the {\em Mordukhovich criterion}) or via   graphical derivatives \cite[Theorem 4B.1]{DoRo14}.
For more information as to metric regularity we refer to
the monographs \cite{DoRo14,Io17,KK02,Mo18,RoW 98}.

The second approach corresponds to Robinson's famous result on polyhedral multifunctions \cite[{Proposition 1}]{Rob81}
and is, in turn, restricted to this special case.
There are many situations, however,
in which metric subregularity is provably satisfied,
yet can not be detected by either of these approaches.
%
%
%
%

%
%

Therefore, a lot of attention has been given to conditions that lie between metric regularity and metric subregularity.
A very common approach is to provide sufficient conditions for subregularity/calmness in terms of various derivative-like objects
\cite{FabHenKruOut10,HenJouOut02,HeO 05,IoOut08,Kru15a,Penot10,WuY 03}.
An exception is the early and very interesting attempt by Klatte and Kummer \cite[Theorem 3.6]{KK02a}, where calmness of an intersection mapping is studied.
For further details on metric subregularity and related notions as well as more bibliographical pointers on the topic, we refer to the paper and the textbook by Dontchev and Rockafellar \cite{DoRo04,DoRo14}
and to the textbook by Ioffe \cite{Io17}.


We will further focus on the following two strategies:
the first one is obtained by the {\em pseudo-} and {\em quasi-normality},
first introduced for nonlinear programming in \cite{BeO 02},
and later extended to MPCCs in \cite{KaS 10, YeZ 14} as well as to general programs of the form \eqref{eq:GMP}, see \cite{GuoYeZhang13}.
The second one, based on the directional approach recently developed
by Gfrerer and co-authors \cite{BeGfrOut18,Gfr13a,Gfr13b,Gfr14a,GfrKla16,GO3},
was established and utilized in \cite{Gfr13a,Gfr13b,GfrKla16} under the name
{\em first/second-order sufficient condition for metric subregularity} (FOSCMS/SOSCMS).
FOSCMS can be viewed as a directional, less restrictive counterpart of the Mordukhovich criterion.
The main advantage of these conditions is their {\em point-based} nature,
which makes it possible to verify them efficiently.
The point-based character of these conditions can be justified by the existence of suitable
calculus rules for the (directional) limiting normal cones, despite the fact that these objects are defined with information taken from
the neighborhood by using a limiting process.

In this paper, we synthesize
the concepts of pseudo- and quasi-normality with the above mentioned  directional approach,
which also  serves as our main workhorse throughout the paper.
Hence, we study constraint qualifications called
{\em directional pseudo-/quasi-normality},
which are milder than both pseudo-/quasi-normality and FOSCMS, and imply MSCQ (cf. Theorems \ref{th:Nonexist} and \ref{The : MainDQNtoMS}).

We would like to point out, that despite working on this combined approach independently of
Bai, Ye, and Zhang \cite{BYZ19}, the exact same definitions of directional pseudo- and quasi-normality
were first published in said paper.
Here, we present alternative or simpler proofs of certain common results using
different techniques, which can further illuminate these novel tools for the reader.
Moreover, we present a thorough investigation of applicability of these new CQs,
which goes beyond the material in \cite{BYZ19}.

Although the core material of our study is valid for general programs \eqref{eq:GMP} with an arbitrary closed set $\Gamma$,
we are particularly interested in situations when $\Gamma$ is not convex.
Optimization problems with inherently nonconvex structures induced by imposing logical or combinatorial conditions on otherwise smooth or convex data \cite{Sch 04}
has been of  increasing interest in recent years.
Among the prominent examples are  {\em mathematical programs with complementarity constraints} (MPCCs) \cite{LuPaRa96,OutKoZo98},
or  {\em mathematical programs with vanishing constraints} (MPVCs)  \cite{Ho09}, etc.
For these optimization problems there are several applications in the natural and social sciences,
economics and engineering. Moreover, they are very challenging from both a theoretical and numerical perspective.
More examples of such programs are discussed in Section 4, where we apply our results to {\em disjunctive programs}
in which $\Gamma$ is a finite union of polyhedra.
In Section 5 we introduce the new notion of {\em ortho-disjunctive programs}.
Ortho-disjunctive programs are disjunctive programs with an additional product structure of $\Gamma$
which allows us to address some issues that cannot be resolved in the general disjunctive setting.
Both disjunctive and ortho-disjunctive programs provide a unified framework for the above mentioned particular problem classes.

The main contributions of the paper are as follows:

\begin{itemize}[leftmargin=*]

\item \textit{Pseudo-normality for disjunctive programs:} For disjunctive programs,
we observe that pseudo-normality can be cast in a simpler way which is, in fact,
 a proper extension of the definition of pseudo-normality that has already been used for  NLPs and  MPCCs in the literature.
 This new definition, however, reveals an interpretation of pseudo-normality via  an extremal condition, see \eqref{eq : PNdef},
which is neither visible from the general definition for \eqref{eq:GMP} nor from the specially tailored ones for NLPs and MPCCs, respectively.
This extremal condition then yields efficient tools to verify pseudo-normality.
Indeed, apart from recovering the Robinson's result and the Gfrerer's SOSCMS,
employing higher-order analysis yields a variety of new milder point-based sufficient conditions for pseudo-normality and MSCQ, see Section \ref{sec:Higher}.

\item \textit{Quasi-normality for ortho-disjunctive programs:}
A similar approach as the one to pseudo-normality can be made for (directional) quasi-normality
if one moves from the disjunctive to even more specialized ortho-disjunctive setting,
designed to utilize an underlying product structure exhibited by the standard examples of disjunctive
programs (MPCCs, MPVCs).
The corresponding extremal condition characterizing quasi-normality leads to a
surprising connection between quasi-normality and multi-objective optimization.
Again, sufficient conditions of second- or higher-order are readily available.

\item \textit{PQ-normality:}
In Section 3 we  established  the new notion of (directional) PQ-normality,
which includes both pseudo- and quasi-normality as extreme cases.
This unified notion puts us in a position to better understand  and  to exploit   certain product structures for which neither
quasi- nor  pseudo-normality  is suitable.

\end{itemize}

The rest of the paper is organized as follows. In Section 2 we present some preliminary results
and notions from variational analysis as well as key results regarding constraint qualifications.
Section 3 contains fundamental results of our study dealing with
CQs for the general program \eqref{eq:GMP}.
In Section 4, we study disjunctive programs
and obtain full results on pseudo-normality.
Section 5 deals with disjunctive programs with additional product structures
often present in the problems of interest (MPCCs, MPVCs, etc.).
In particular, the notion of ortho-disjunctive programs is introduced and
complete results on quasi-normality are obtained.
 \\
 \\
{\em Notation:} Most of the notation used is standard:
 The closed ball in $\R^n$ with center at $x$ and radius $r$ is denoted by $\bB_r(x)$
 and we use $\bB:=\bB_1(0)$ for the closed unit ball.
 The extended real line is given by $\rbar:=\R\cup\{\pm \infty\}$. For $f:\R^n\to \rbar$ its {\em epigraph} is given by
 $\epi f:=\set{(x,\alpha)\in \R^n\times \R}{f(x)\leq \alpha}$.
For a nonempty set $S\subset \R^n$ we define the (Euclidean) {\em distance function} $\dist_S:\R^n\to \R$ through
$\dist_S(x):= \inf_{y\in S}\|x-y\|$.
The {\em projection mapping} $P_S:\R^n\rightrightarrows S$ associated with $S$ is defined by
$P_S(x):=\argmin_{y\in S}\|x-y\|$.
We write $\{x_k\}$ for a sequence of scalars and $\{x^k\}$ for a sequence of vectors.
For a mapping $F:\R^n\to \R^m$ its  {\em Jacobian}  at $\bar x$  is denoted by $\nabla F(\bar x)$.  In particular, for $f$   $f:\R^n\to \R$, the Jacobian $\nabla f(\bar x)$  at $\bar x$  is a row vector, and we denote its  {\em Hessian} at $\xb$ by $\nabla^2 f(\xb)$. Moreover, for $\lambda \in \R^m$ the scalarized function $\ip{\lambda}{F}:\R^n\to \R$ is given by $\ip{\lambda}{F}(x)=\lambda^TF(x)$.
Note that for $u \in \R^n$ we have
$\nabla \ip{\lambda}{F}(\xb)^T u = \ip{\lambda}{\nabla F(\xb)u}$
and we often use the latter notation.
For a matrix $A\in \R^{m\times n}$, its  {\em range} or {\em image} is $\Range A:=\set{Ax}{x\in \R^n}$.  For some vector $v\in \R^n$ we set
$\R_+v:=\set{t v}{t\geq 0}$ and $\R_-v:=\set{t v}{t \leq  0}$.

\section{Preliminaries}

This section is divided into two parts.
First, we introduce some basic notions from variational analysis.
The second part is devoted to constraint qualifications for the general mathematical program \eqref{eq:GMP}.

\subsection{Variational analysis}

Given a closed set $C \subset \R^n$ and  $z \in C$,   the  {\em  tangent cone} to $C$ at $z$ is defined by
  \[T_{C}(z) := \set{ d \in \R^n}{\exists \{d^k\} \to d, \{t_k\} \downarrow 0:\;
  z + t_k d^k \in C \;(k\in \N) }.
   \]
  The {\em regular normal cone} to $C$ at $z$  is given  as the polar cone of the tangent cone, i.e.
  \[
    \widehat N_{C}(z) :=  \{ z^* \in \R^n \mv \skalp{z^*,d} \leq 0 \;(d \in T_{C}(z))\}.
  \]
  The {\em limiting normal cone} to $C$ at $z$ is given by
  \[
    N_{C}(z) := \set{ z^* \in \R^n}{ \exists \{\tilde z^k\} \to z^*, \{z^k\} \to z :\; z^k \in C, \tilde z^k \in \widehat N_{C}(z^k)
    \;(k\in \N)}.
  \]
If $z \notin C$ we set $T_{C}(z) := \widehat N_{C}(z) := N_{C}(z) := \emptyset$. Observe that  $\widehat N_{C}(z) \subset N_{C}(z)$ holds.    In case $C$ is a convex set, regular and limiting normal cone coincide with the classical normal cone of convex analysis, i.e.,
  \begin{equation} \label{eq : convexNC}
    \widehat N_{C}(z) = N_{C}(z) = \set{z^* \in \R^n}{ \skalp{z^*,v - z} \leq 0 \ (v \in C)},
  \end{equation}
  and we will use the notation $N_{C}(z)$ in this case.
  Finally, given a direction $d \in \R^n$, the {\em limiting normal cone} to $C$ at $z$ {\em in direction} $d$ is defined by
\begin{equation*}\label{eq : NCdirdef}
  N_{C}(z;d):=\set{z^* \in\R^n}{\exists \{t_k\} \downarrow 0,\ \{d^k\}\to d,\ \{\tilde z^k\} \to z^*:\
  \tilde z^k \in \widehat N_{C}(z+t_k d^k) \ (k\in \N)}.
\end{equation*}
Note that, by definition, we have $N_{C}(z;0)=N_{C}(z)$.
Furthermore, observe that $N_{C}(z;d) \subset N_{C}(z)$ for all $d\in \R^n$ and $N_{C}(z;d)=\emptyset$
if $d \notin T_{C}(z)$.

For $f:\R^n\to \rbar$ and $\bar x$ such that $f(\bar x)$ is finite (hence $(\bar x,f(\bar x))\in \epi f)$) the  sets
\[
\hat \partial f(\xb):=\big\{\xi\in\R^n\;\big|\;(\xi^*,-1) \in \widehat N_{{\rm\small epi}\,f}\big(\xb,f(\xb)\big)\big\}, \quad
\partial f(\xb):=\big\{\xi\in\R^n\;\big|\;(\xi^*,-1) \in N_{{\rm\small epi}\,f}\big(\xb,f(\xb)\big)\big\}
\]
denote the {\em regular} and {\em limiting subdifferential} of $f$ at $\xb$, respectively.
 Observe that, in particular, for the indicator function of a set $C\in \R^{n}$, given by
 \[
 \delta_C:x\mapsto \left\{\begin{array}{ll} 0 & \text{if} \; x \in C,\\
 +\infty& \text{else},
 \end{array}\right.
 \]
 we have
$
\hat\p\delta_C=N_{C}\AND \p\delta_C=N_{C}.
$
The distance function enjoys a rich subdifferential calculus briefly summarized in the next result.

\begin{proposition} [Subdifferentiation of distance function]\label{prop:Dist} Let $S\subset \R^d$ be closed and $F:\R^n\to \R^d$ continuously differentiable. Then the following hold:
\begin{itemize}
\item[(i)] (\cite[Example 8.53]{RoW 98}) $\partial \dist_S(y)=  \left\{\begin{array}{ll} N_S(y)\cap \bB & \text{if}\; y\in S,\\
\frac{ y-P_S(y)}{\dist_S(y)}& \text{if}\; y\notin S;
\end{array}\right.$
\item[(ii)] (\cite[Theorem 10.6]{RoW 98}) $\partial (\dist_S\circ F)(x)\subset \nabla F(x)^T \partial \dist_S(F(x)).$
%
\end{itemize}
\end{proposition}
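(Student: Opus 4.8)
The plan is short: both items are verbatim quotations from Rockafellar and Wets \cite{RoW 98}, so the proof consists of invoking \cite[Example 8.53]{RoW 98} for (i) and \cite[Theorem 10.6]{RoW 98} for (ii). For completeness I would reconstruct the arguments as follows.

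For (i) I would compute the regular subdifferential first and then pass to the limiting one via an outer limit. If $y \in S$, the $1$-Lipschitz estimate $\dist_S(z) \le \|z - y\|$ forces $\hat\partial \dist_S(y) \subset \bB$, and testing the subgradient inequality $\dist_S(z) \ge \langle \xi, z - y\rangle + o(\|z-y\|)$ against points $z \in S$, where $\dist_S(z) = 0$, yields $\langle \xi, d\rangle \le 0$ for every $d \in T_S(y)$, i.e.\ $\xi \in \widehat N_S(y)$; the reverse inclusion is a short direct check, so $\hat\partial \dist_S(y) = \widehat N_S(y) \cap \bB$. If $y \notin S$ and $P_S(y) = \{p\}$ is a singleton, then near $y$ the smooth map $z \mapsto \|z - p\|$ majorizes $\dist_S$ and agrees with it at $y$, which already pins down the first-order expansion and gives differentiability with $\nabla \dist_S(y) = (y - p)/\dist_S(y)$; when $P_S(y)$ is not a singleton the same reasoning only yields the inclusion of $\partial \dist_S(y)$ into $\{ (y-p)/\dist_S(y) \mid p \in P_S(y)\}$. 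The stated formula for $\partial \dist_S(y)$ then follows from $\partial \dist_S(y) = \Limsup_{z \to y} \hat\partial \dist_S(z)$: for $y \in S$ the truncated cones $\widehat N_S(z) \cap \bB$ (from $z \in S$) accumulate exactly in $N_S(y) \cap \bB$, while the unit vectors $(z - p)/\dist_S(z)$ arising from $z \notin S$ are proximal normals to $S$ at $p \in P_S(z)$, and since $p \to y$ as $z \to y$ their limits also lie in $N_S(y) \cap \bB$; for $y \notin S$ the conclusion is immediate.

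For (ii) I would observe that $g := \dist_S$ is globally Lipschitz with modulus $1$ and finite everywhere, so its singular subdifferential satisfies $\partial^\infty g(F(x)) = \{0\}$, and hence the qualification condition in the chain rule \cite[Theorem 10.6]{RoW 98} --- that $\nabla F(x)^T \eta = 0$, $\eta \in \partial^\infty g(F(x))$ imply $\eta = 0$ --- holds trivially. That theorem then yields exactly $\partial(\dist_S \circ F)(x) \subset \nabla F(x)^T \partial \dist_S(F(x))$.

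The only genuinely delicate point, should one insist on a self-contained treatment rather than a citation, is the differentiability claim in (i) for $y \notin S$ with $P_S(y)$ a singleton: one must show that the majorant $z \mapsto \|z - p\|$ controls the full first-order behaviour of $\dist_S$ at $y$, equivalently that $\dist_S$ is strictly differentiable there. Everything else is routine manipulation of the definitions of the regular and limiting normal cones.
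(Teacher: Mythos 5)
The paper gives no proof of this proposition: items (i) and (ii) are simply attributed to \cite[Example 8.53]{RoW 98} and \cite[Theorem 10.6]{RoW 98}, which is precisely what your proposal does. Your supplementary reconstruction (regular subdifferential first, then the outer limit for (i); the trivial qualification condition $\partial^\infty \dist_S = \{0\}$ from global Lipschitz continuity for (ii)) is consistent with the standard arguments and contains no gap that matters here.
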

%
\if{
\noindent
The following elementary observation, which  is readily derived  from first-order optimality conditions, is stated,  e.g.,  in \cite[Example 6.16]{RoW 98}.

\begin{lemma}\label{lem:ProjNorm} Let $C\subset \R^n$ be closed and $\bar x \in P_C(x)$. Then
\[
t(x-\bar x)\in \widehat N_C(\bar x)\quad (t\geq 0).
\]
\end{lemma}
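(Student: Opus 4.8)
The plan is to establish the (formally stronger) fact that $x-\bar x\in\widehat N_C(\bar x)$; the stated claim then follows immediately, since $\widehat N_C(\bar x)$ is by definition the polar cone of $T_C(\bar x)$, hence a (closed, convex) cone, and is therefore closed under multiplication by any scalar $t\ge 0$. First I would dispose of the trivial case $x\in C$, in which $\bar x=x$ and the assertion reads $0\in\widehat N_C(\bar x)$, which always holds; so one may assume $x\notin C$.

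The main step uses the characterization $\widehat N_C(\bar x)=\{z^*\in\R^n\mv\skalp{z^*,d}\le 0\ (d\in T_C(\bar x))\}$. Fix an arbitrary $d\in T_C(\bar x)$ and pick, by definition of the tangent cone, sequences $d^k\to d$ and $t_k\downarrow 0$ with $\bar x+t_k d^k\in C$ for all $k$. Since $\bar x\in P_C(x)$ minimizes $y\mapsto\|x-y\|$ over $C$ and $\bar x+t_k d^k$ is feasible, we get $\|x-\bar x\|^2\le\|x-\bar x-t_k d^k\|^2=\|x-\bar x\|^2-2t_k\skalp{x-\bar x,d^k}+t_k^2\|d^k\|^2$. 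Cancelling $\|x-\bar x\|^2$ and dividing by $2t_k>0$ yields $\skalp{x-\bar x,d^k}\le\tfrac{t_k}{2}\|d^k\|^2$. Letting $k\to\infty$, using $d^k\to d$, $t_k\to 0$, and the boundedness of $\{\|d^k\|\}$, gives $\skalp{x-\bar x,d}\le 0$. As $d\in T_C(\bar x)$ was arbitrary, $x-\bar x\in\widehat N_C(\bar x)$, and the cone property then gives $t(x-\bar x)\in\widehat N_C(\bar x)$ for all $t\ge 0$.

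I do not expect a genuine obstacle here: the only point requiring a modicum of care is the elementary limiting passage in the displayed chain of (in)equalities. An alternative, even shorter route would be to observe that $\bar x$ minimizes the smooth function $\tfrac12\|x-\cdot\|^2$ over $C$ and to invoke the first-order optimality condition $0\in\nabla\big(\tfrac12\|x-\cdot\|^2\big)(\bar x)+\widehat N_C(\bar x)$, i.e. $x-\bar x\in\widehat N_C(\bar x)$; I would nonetheless prefer the direct computation above, as it keeps the proof self-contained and does not presuppose the normal-cone form of the optimality condition.
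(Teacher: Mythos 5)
Your proof is correct. Note, however, that the paper does not actually prove this lemma: it appears (in a portion of the source that is commented out in the final version) as an ``elementary observation, readily derived from first-order optimality conditions,'' with a pointer to \cite[Example 6.16]{RoW 98}. So the paper's route is exactly the \emph{alternative} you sketch in your last paragraph -- apply the normal-cone form of the first-order optimality condition to the minimization of $\tfrac12\|x-\cdot\|^2$ over $C$ -- whereas your main argument is a self-contained verification straight from the definitions. Your computation is sound: since the paper defines $\widehat N_C(\bar x)$ as the polar of $T_C(\bar x)$, it suffices to check $\skalp{x-\bar x,d}\le 0$ for each tangent direction $d$, and the expansion $\|x-\bar x-t_kd^k\|^2=\|x-\bar x\|^2-2t_k\skalp{x-\bar x,d^k}+t_k^2\|d^k\|^2$ together with the minimality of $\bar x$ and the passage $t_k\downarrow 0$, $d^k\to d$ does exactly that; the reduction to $t=1$ via the cone property of $\widehat N_C(\bar x)$ is also fine. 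What your approach buys is independence from the optimality-condition machinery (which itself rests on the tangent/normal polarity you use directly); what the citation route buys is brevity. Either is acceptable.
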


\noindent
We will make use of {\em Ekeland's variational principle} \cite{Eke74}, which we provide for the reader's convenience in the form
given in \cite[Proposition 1.43]{RoW 98}.

\begin{proposition}[Ekeland's variational principle]\label{prop:Ekeland}
Let $f: \R^n \to \bar \R$ has closed epigraph $\epi f$ with $\inf_{x \in \R^n} f(x)$ finite, and let $\xb$
be an $\varepsilon$-minimizer of $f$ for some $\varepsilon>0$, i.e., $f(\xb) \leq \inf_{x} f(x)+\varepsilon$. Then for any $\delta>0$ there exists a point $\tilde x$ such that
\[
\norm{\tilde x - \xb} \leq \varepsilon/\delta,\quad f(\tilde x) \leq f(\xb)\AND
\argmin\left\{f + \delta \norm{(\cdot)- \tilde x}\right\} = \{\tilde x\}.
\]
\end{proposition}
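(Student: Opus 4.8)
The plan is to run the classical proof of Ekeland's principle, which is self-contained and relies only on the completeness of $\R^n$ and on the lower semicontinuity of $f$ (equivalently, on the closedness of $\epi f$); none of the variational-analytic machinery above is needed.

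First I would equip $\R^n$ with the relation $u \preceq v :\Longleftrightarrow f(u) + \delta\norm{u - v} \le f(v)$. The triangle inequality shows $\preceq$ is a partial order — only transitivity is not immediate, and it follows upon adding the two defining inequalities — and for each $v$ the section $S(v) := \set{u \in \R^n}{u \preceq v}$ is nonempty (it contains $v$) and closed (it is a sublevel set of the lower semicontinuous function $u \mapsto f(u) + \delta\norm{u-v}$). The decisive observation is that \emph{any} $\preceq$-minimal $\tilde x$ satisfying $\tilde x \preceq \xb$ already delivers all three conclusions simultaneously: from $\tilde x \preceq \xb$ we read off $f(\tilde x) \le f(\xb)$ and $\delta\norm{\tilde x - \xb} \le f(\xb) - f(\tilde x) \le f(\xb) - \inf_x f(x) \le \varepsilon$, hence $\norm{\tilde x - \xb} \le \varepsilon/\delta$; and minimality says that no $u \ne \tilde x$ satisfies $u \preceq \tilde x$, that is $f(\tilde x) < f(u) + \delta\norm{u - \tilde x}$ for every $u \ne \tilde x$, which is precisely $\argmin\{f + \delta\norm{(\cdot)-\tilde x}\} = \{\tilde x\}$.

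It then remains to construct such a minimal element, which I would do by the standard iteration: set $x^0 := \xb$ and, given $x^k$, choose $x^{k+1} \in S(x^k)$ with $f(x^{k+1}) \le \inf_{S(x^k)} f + 2^{-k}$, which is possible because $f$ is bounded below. Transitivity of $\preceq$ yields nested nonempty closed sets $S(x^0) \supseteq S(x^1) \supseteq \cdots$, and for any $u \in S(x^{k+1}) \subseteq S(x^k)$ one has $\delta\norm{u - x^{k+1}} \le f(x^{k+1}) - f(u) \le 2^{-k}$, so their diameters shrink to $0$; Cantor's intersection theorem then gives $\bigcap_k S(x^k) = \{\tilde x\}$ for some $\tilde x$, and $\tilde x$ is $\preceq$-minimal since $u \preceq \tilde x$ forces $u \in \bigcap_k S(x^k)$ by transitivity. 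As $\tilde x \in S(x^0) = S(\xb)$ we get $\tilde x \preceq \xb$, and the proof is complete. The only point that needs care is that the whole scheme hinges on the sections $S(v)$ being \emph{closed} — this is exactly where the closed-epigraph hypothesis enters — and on reading the radius bound directly off the single relation $\tilde x \preceq \xb$ rather than accumulating increments along the sequence; the rest is routine manipulation of the triangle inequality.
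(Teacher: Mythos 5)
Your proof is correct, but note that the paper does not actually prove this statement: it is quoted verbatim from Rockafellar--Wets \cite[Proposition~1.43]{RoW 98} and used as a black box (in the proof of Lemma~\ref{Lem : DirViolMSCQ}). What you supply is the classical self-contained argument — the Br{\o}ndsted/Bishop--Phelps ordering $u \preceq v \Leftrightarrow f(u)+\delta\norm{u-v}\le f(v)$, the observation that any $\preceq$-minimal point below $\xb$ delivers all three conclusions at once, and the construction of such a point by almost-minimizing $f$ over the nested closed sections and invoking Cantor's intersection theorem in the complete space $\R^n$. All steps check out: transitivity via the triangle inequality, closedness of the sections from lower semicontinuity (i.e., closedness of $\epi f$), the diameter estimate $\delta\norm{u-x^{k+1}}\le f(x^{k+1})-f(u)\le 2^{-k}$ for $u\in S(x^{k+1})\subseteq S(x^k)$, and reading the radius bound $\delta\norm{\tilde x-\xb}\le f(\xb)-\inf_x f(x)\le\varepsilon$ directly off the single relation $\tilde x\preceq\xb$. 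The only cosmetic caveat is that $\preceq$ fails antisymmetry between two points where $f=+\infty$, so it is a genuine partial order only on $\dom f$; this is harmless because $f(\xb)<\infty$ and your iteration never leaves $\set{x}{f(x)\le f(\xb)}\subset\dom f$ — and in fact antisymmetry is never used, since minimality and the singleton intersection from Cantor's theorem do all the work. Your approach buys a fully elementary, citation-free proof at the cost of about a page; the paper's choice to cite the result is the standard economy, since Ekeland's principle is textbook material.
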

}\fi

\subsection{Constraint qualifications}

The purpose of this section is to recall several well-established CQs for the general program \eqref{eq:GMP}
and to highlight some basic relations between them. We commence with the CQ that is most important to our study.

\begin{definition}[MSCQ]\label{def:MSCQ}
Let $\bar x$ be feasible for \eqref{eq:GMP}. We say that the {\em metric subregularity constraint qualification (MSCQ)}
holds at $\bar x$ if there exists a neighborhood $U$ of $\bar x$ and $\kappa>0$ such that
\[
\dist_{\cX}(x)\leq \kappa \dist_{\Gamma}(F(x))\quad (x\in U).
\]
\end{definition}
Note that MSCQ is exactly metric subregularity in the set-valued sense
of the {\em feasibility mapping} for the  constraint system $\cX=F^{-1}(\Gamma)$
 which is given by  $M(x) := F(x) - \Gamma$, see e.g. \cite{GfrKla16}.

The stronger property of metric regularity holds for $M$ around $(\bar x,0)$ if and only if
 there are neighborhoods $U$ of $\bar x$ and $V$ of $0$ and $\kappa>0$ such that
\[
\dist_{M^{-1}(y)}(x) \leq \kappa \dist_{M(x)}(y) = \kappa \dist_{\Gamma}(F(x)-y) \quad ((x,y) \in U \times V).
\]
It is well-known that metric regularity of a multifunction is equivalent to the {\em Aubin property}
of the inverse multifunction \cite[Theorem 9.43]{RoW 98}.
Applying the Mordukhovich criterion to the feasibility mapping $M$ yields a condition that
there is no nonzero multiplier $\bar \lambda\in N_{\Gamma}(F(\bar x))$ such that
\begin{equation}\label{eq:MStat}
\nabla F(\bar x)^T\bar \lambda=0,
\end{equation}
which is often known as {\em generalized Mangasarian-Fromovitz constraint qualification (GMFCQ)} at $\xb$.
In the rest of the paper, we mainly stick to the GMFCQ terminology, but sometimes
refer to GMFCQ also as the Mordukhovich criterion.
Thanks to the calculus rules for limiting normal cones and subdifferentials,
the Mordukhovich criterion often provides an efficient tool for verifying  metric regularity.
There are still plenty of situations, however, where GMFCQ is not fulfilled but MSCQ is.
It is therefore an important and worthwhile endeavor to fill the gap between GMFCQ and MSCQ,
ideally with verifiable conditions at that.
Let us proceed with the next list of constraint qualifications for \eqref{eq:GMP}
relevant for our study, see, e.g., \cite{GuoYeZhang13,GfrKla16}.

\begin{definition}[Constraint qualifications]\label{def:CQ} Let $\bar x\in\cX$ be feasible for \eqref{eq:GMP}.We say that
\begin{itemize}
\item[(i)] \underline{\em pseudo-normality} holds at $\bar x$ if there is no nonzero $\bar \lambda\in N_{\Gamma}(F(\bar x))$ such that \eqref{eq:MStat} holds and that satisfies the following condition:
There exists a sequence $\{(x^k,y^k,\lambda^k)\in \R^n\times \Gamma\times \R^d\}\to (\bar x,F(\bar x), \bar \lambda)$ with
\[
\lambda^k\in \widehat N_{\Gamma}(y^k)\; \AND \ip{\bar \lambda}{F(x^k)-y^k}>0\quad (k\in \N);
\]
\item[(ii)]\underline{\em quasi-normality} holds at $\bar x$ if there is no nonzero $\bar \lambda \in N_{\Gamma}(F(\bar x))$ such that \eqref{eq:MStat} holds and that satisfies the following condition:
There exists a sequence $\{(x^k,y^k,\lambda^k)\in \R^n\times \Gamma\times \R^d\}\to (\bar x,F(\bar x),\bar \lambda )$ with
\[
\lambda^k\in \widehat N_{\Gamma}(y^k)\; \AND \bar \lambda_i (F_i(x^k)-y_i^k)>0 \quad\text{if}\quad \bar \lambda_{i}\neq 0\quad (k\in\N);
\]
\item[(iii)] \underline{\em first-order sufficient condition for metric subregularity (FOSCMS)} holds at $\xb$
if for every $0 \neq u \in \R^n$ with $\nabla F(\xb) u \in T_{\Gamma}(F(\xb))$ one has
\begin{equation*}
\nabla F(\xb)^T \lambda = 0, \ \lambda \in N_{\Gamma}(F(\xb);\nabla F(\xb) u) \ \Longrightarrow \
\lambda = 0;
\end{equation*}
\item[(iv)] \underline{\em second-order sufficient condition for metric subregularity (SOSCMS)} holds at $\xb$
if $F$ is twice differentiable at $\xb$, $\Gamma$ is the union of finitely many convex polyhedra, and for every $0 \neq u \in \R^n$ with $\nabla F(\xb) u \in T_{\Gamma}(F(\xb))$ one has
\begin{equation*}
\nabla F(\xb)^T \lambda = 0, \ \lambda \in N_{\Gamma}(F(\xb);\nabla F(\xb) u), \
u^T \nabla^2 \ip{\lambda}{F}(\xb) u \geq 0 \ \Longrightarrow \
\lambda = 0.
\end{equation*}
\end{itemize}
\end{definition}

We point out that imposing that the (nonexisting) multiplier $\bar \lambda$  is in $N_\Gamma (F(\bar x))$ in the definition of pseudo-/quasi-normality is, clearly, redundant, since
it follows from $\lambda^k\in \widehat N_{\Gamma}(y^k)$.
Nevertheless, in order to be consistent with the literature
and to emphasize the connection to GMFCQ and other CQs, we stick to the original definition.
In particular, it is obvious from the definition that GMFCQ
implies both pseudo- and hence quasi-normality.
The concepts of pseudo- and quasi-normality are well-established in the literature.
Note that in \cite{GuoYeZhang13}, the condition $\lambda^k\in \widehat N_{\Gamma}(y^k)$ in (i) and (ii)
is replaced by $\lambda^k\in N_{\Gamma}(y^k)$.
In order to see that no difference arises, consider the following elementary lemma
which follows readily from the definitions of continuity and of the limiting normal cone, respectively.

\begin{lemma}\label{lem:Aux}
Let $\Gamma \subset \R^d$ be closed,   $y \in \Gamma, \lambda \in N_{\Gamma}(y)$ and let $a: \R^d \times \R^d \to \R^q$ be continuous. Then for every  $\varepsilon > 0$ there exist
$\tilde y \in \Gamma$ and $\tilde \lambda \in \widehat N_{\Gamma}(\tilde y)$ such that $\norm{a(\tilde y, \tilde \lambda) - a(y, \lambda)} < \epsilon$.
\end{lemma}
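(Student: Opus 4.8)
The plan is to unwind the definition of the limiting normal cone $N_\Gamma(y)$ and then invoke continuity of $a$ to make the perturbation error small. First I would recall that, since $\lambda \in N_\Gamma(y)$, by the very definition of the limiting normal cone there exist sequences $\{y^k\} \to y$ with $y^k \in \Gamma$ and $\{\lambda^k\} \to \lambda$ with $\lambda^k \in \widehat N_\Gamma(y^k)$ for all $k \in \N$. (In the degenerate case $\lambda = 0$ one may simply take $y^k \equiv y$, $\lambda^k \equiv 0$, since $0 \in \widehat N_\Gamma(y)$ always; but the general limiting-cone definition covers this too.)

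Next, since $a: \R^d \times \R^d \to \R^q$ is continuous at the point $(y,\lambda)$ and $(y^k, \lambda^k) \to (y,\lambda)$, we have $a(y^k, \lambda^k) \to a(y,\lambda)$. Hence, given $\varepsilon > 0$, there is an index $k$ large enough that $\norm{a(y^k,\lambda^k) - a(y,\lambda)} < \varepsilon$. Setting $\tilde y := y^k$ and $\tilde \lambda := \lambda^k$ yields $\tilde y \in \Gamma$, $\tilde \lambda \in \widehat N_\Gamma(\tilde y)$, and $\norm{a(\tilde y, \tilde\lambda) - a(y,\lambda)} < \varepsilon$, which is exactly the claim.

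There is essentially no obstacle here — the statement is, as the authors say, an elementary consequence of the definitions, and the only thing to be careful about is making sure the limiting normal cone is used in the sequential form that directly supplies the required approximating pair $(y^k, \lambda^k)$ with $\lambda^k \in \widehat N_\Gamma(y^k)$, rather than a closure-type characterization that would only give $\lambda^k \in N_\Gamma(y^k)$. As a remark on how this is used in the paper: applying the lemma coordinatewise (or with $q = d+1$ and $a(y,\lambda) = (\ip{\bar\lambda}{F(x)-y}, \lambda)$ for fixed $x$, combined with continuity of $F$ in $x$) shows that in Definition \ref{def:CQ}(i),(ii) one may equivalently require $\lambda^k \in N_\Gamma(y^k)$ or $\lambda^k \in \widehat N_\Gamma(y^k)$, reconciling the present formulation with that of \cite{GuoYeZhang13}.
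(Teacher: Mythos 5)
Your proof is correct and follows exactly the route the paper intends: the paper gives no explicit proof, noting only that the lemma ``follows readily from the definitions of continuity and of the limiting normal cone,'' which is precisely your argument of extracting the approximating sequence $(y^k,\lambda^k)$ with $\lambda^k\in\widehat N_\Gamma(y^k)$ from the definition of $N_\Gamma(y)$ and then invoking continuity of $a$.
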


\begin{corollary}\label{cor:Aux} Under the assumptions of Definition \ref{def:CQ} let   $\{(x^k,y^k,\lambda^k)\in \R^n\times \Gamma\times \R^d\}\to (\bar x,F(\bar x), \bar \lambda)$. Then the following hold:
\begin{itemize}
\item[(i)] If $\lambda^k\in N_{\Gamma}(y^k)$ and $ \ip{\bar \lambda}{F(x^k)-y^k}>0$ for all $k\in \N$ then there exists  $\{(\tilde y^k,\tilde \lambda^k)\}\to (F(\bar x),\bar \lambda) $ such that  $\tilde \lambda^k\in \widehat N_\Gamma(\tilde y^k)$ and $\ip{\bar \lambda}{F(x^k)-\tilde y^k}>0$ for all $k\in \N$.

\item[(ii)]  If $\lambda^k\in  N_{\Gamma}(y^k)$ and $\bar \lambda_i (F_i(x^k)-y_i^k)>0  \;(i:\bar \lambda_{i}\neq 0)$ for all $k\in \N$ then there exists $\{(\tilde y^k,\tilde \lambda^k)\}\to (F(\bar x),\bar \lambda) $ such that
$\tilde \lambda^k\in \widehat N_\Gamma(\tilde y^k)$ and  $\bar \lambda_i (F_i(x^k)-\tilde y_i^k)>0  \;(i:\bar \lambda_{i}\neq 0)$ for all $k\in \N$.

\end{itemize}

\end{corollary}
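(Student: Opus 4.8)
The plan is to derive both items as essentially immediate consequences of Lemma \ref{lem:Aux}, applied index by index along the given sequence. The key observation is that the strict inequalities $\ip{\bar\lambda}{F(x^k)-y^k}>0$ and $\bar\lambda_i(F_i(x^k)-y_i^k)>0$ are \emph{open} conditions in the variable $y$ (with $x^k$ and $\bar\lambda$ held fixed), so a sufficiently small perturbation of $y^k$ preserves them, while Lemma \ref{lem:Aux} lets us simultaneously pass from $\lambda^k\in N_\Gamma(y^k)$ to a regular normal $\tilde\lambda^k\in\widehat N_\Gamma(\tilde y^k)$ at a nearby point.

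For item (i): fix $k$. Since $\ip{\bar\lambda}{F(x^k)-y^k}>0$, pick $\eta_k>0$ with $\ip{\bar\lambda}{F(x^k)-y^k}>\eta_k$, and set $\varepsilon_k:=\min\{1/k,\ \eta_k/(\norm{\bar\lambda}+1)\}$. Apply Lemma \ref{lem:Aux} with the closed set $\Gamma$, the point $y^k\in\Gamma$, the limiting normal $\lambda^k\in N_\Gamma(y^k)$, and the continuous map $a(y,\lambda):=(y,\lambda)\in\R^{d}\times\R^{d}$ (so $q=2d$); this produces $\tilde y^k\in\Gamma$ and $\tilde\lambda^k\in\widehat N_\Gamma(\tilde y^k)$ with $\norm{(\tilde y^k,\tilde\lambda^k)-(y^k,\lambda^k)}<\varepsilon_k$. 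Then $\norm{\tilde y^k-y^k}<\varepsilon_k$ and $\norm{\tilde\lambda^k-\lambda^k}<\varepsilon_k$, so from $\varepsilon_k\le 1/k$ and $(y^k,\lambda^k)\to(F(\bar x),\bar\lambda)$ we get $(\tilde y^k,\tilde\lambda^k)\to(F(\bar x),\bar\lambda)$. Moreover,
\[
\ip{\bar\lambda}{F(x^k)-\tilde y^k}=\ip{\bar\lambda}{F(x^k)-y^k}+\ip{\bar\lambda}{y^k-\tilde y^k}>\eta_k-\norm{\bar\lambda}\,\varepsilon_k\ge\eta_k-\frac{\norm{\bar\lambda}}{\norm{\bar\lambda}+1}\,\eta_k>0,
\]
which gives the desired sequence $\{(\tilde y^k,\tilde\lambda^k)\}$.

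For item (ii): proceed analogously, but control each active coordinate separately. Fix $k$ and let $I:=\{i:\bar\lambda_i\neq 0\}$. For each $i\in I$ choose $\eta_k^i>0$ with $\bar\lambda_i(F_i(x^k)-y_i^k)>\eta_k^i$, and set $\varepsilon_k:=\min\big(\{1/k\}\cup\{\eta_k^i/(|\bar\lambda_i|+1):i\in I\}\big)$ (finite minimum since $I$ is finite). Apply Lemma \ref{lem:Aux} exactly as above with $a(y,\lambda):=(y,\lambda)$ to obtain $\tilde y^k\in\Gamma$, $\tilde\lambda^k\in\widehat N_\Gamma(\tilde y^k)$ with $\norm{(\tilde y^k,\tilde\lambda^k)-(y^k,\lambda^k)}<\varepsilon_k$; the convergence $(\tilde y^k,\tilde\lambda^k)\to(F(\bar x),\bar\lambda)$ follows as before. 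For each $i\in I$, since $|\tilde y_i^k-y_i^k|\le\norm{\tilde y^k-y^k}<\varepsilon_k$,
\[
\bar\lambda_i(F_i(x^k)-\tilde y_i^k)=\bar\lambda_i(F_i(x^k)-y_i^k)+\bar\lambda_i(y_i^k-\tilde y_i^k)>\eta_k^i-|\bar\lambda_i|\,\varepsilon_k\ge\eta_k^i-\frac{|\bar\lambda_i|}{|\bar\lambda_i|+1}\,\eta_k^i>0,
\]
so all active strict inequalities are preserved, completing the proof.

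There is no real obstacle here; the only point requiring a modicum of care is the uniform-in-$k$ choice of $\varepsilon_k$, which must simultaneously be small enough to preserve the (finitely many) strict inequalities and go to zero so that convergence to $(\bar x,F(\bar x),\bar\lambda)$ is not destroyed. Since in (ii) the index set $I$ is fixed and finite along the whole sequence, taking a finite minimum suffices and no issue arises.
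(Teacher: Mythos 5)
Your proof is correct and follows essentially the same route as the paper: both reduce the statement to Lemma~\ref{lem:Aux} applied along the sequence with a $k$-dependent tolerance $\varepsilon_k$ small enough to preserve the finitely many strict inequalities while still tending to zero. The only cosmetic difference is that the paper folds the quantity $\ip{\bar\lambda}{F(x^k)-y}$ into the continuous map $a_k$ itself (taking $\varepsilon_k=\min\{1/k,\tfrac12\ip{\bar\lambda}{F(x^k)-y^k}\}$), whereas you take $a(y,\lambda)=(y,\lambda)$ and recover the same control via a Cauchy--Schwarz estimate.
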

\begin{proof} We only prove part (i); part (ii) can be shown analogously. To this end, define the continuous maps
\[
a_{k}:(y,\lambda)\mapsto (y,\lambda,  \ip{\bar \lambda}{F(x^k)-y})\quad (k\in \N),
\]
and set $\eps_k:=\min \left \{ \frac{1}{k}, \frac{1}{2}\ip{\bar \lambda}{F(x^k)-y^k} \right \}$. Applying Lemma \ref{lem:Aux} then generates the desired sequences.
\end{proof}

\noindent
Corollary \ref{cor:Aux} guarantees that using $\lambda^k\in \widehat N_{\Gamma}(y^k)$ instead of
$\lambda^k\in N_{\Gamma}(y^k)$ in the  definition of pseudo- and quasi-normality does not matter.
We note that this is also true for the directional versions of these CQs to be established in Definition \ref{def:DirCQ}.

The obvious drawback of pseudo- and quasi-normality is that they
are expressed via sequences, which makes it is quite difficult to check their validity and apply them.
Another way of relaxing GMFCQ is provided by FOSCMS and SOSCMS, which can be easier to verify
due to the calculus for directional limiting objects  \cite{BeGfrOut18}.

In order to simplify the notation, given $\bar x$ feasible for \eqref{eq:GMP}, we define
\begin{equation}\label{eq : LamdbaDef}
 \Lambda^0(\xb;u) := \ker \nabla F(\xb)^T \cap N_{\Gamma}(F(\xb);\nabla F(\xb)u) \quad (u\in \R^n)
 \end{equation}
and set
\[
\Lambda^0(\xb) := \Lambda^0(\xb;0)=\ker \nabla F(\xb)^T \cap N_{\Gamma}(F(\xb)),
\]
i.e., the directional normal cone is replaced by the standard one.
With these conventions, GMFCQ at $\bar x$ reads
\[
\Lambda^0(\xb) =\{0\},
\]
while FOSCMS now reads
\[
\Lambda^0(\xb;u)=\{0\}\quad (u:\nabla F(\xb)u\in T_{\Gamma}(F(\bar x))).
\]
The fact that GMFCQ implies FOSCMS is clear from the inclusion
\[
N_{\Gamma}(F(\xb);\nabla F(\xb)u)\subset N_{\Gamma}(F(\xb))\quad (u\in \R^n).
\]
The following example shows that this implication can be strict. In addition, it also illustrates that MSCQ is strictly weaker than quasi-normality,
cf. Proposition \ref{The : CQsToMS}(i).

\begin{example}\label{ex: DirCQ}
Let  $\Gamma :=\{y\in \R^2 \mv y_2 \geq \vert y_1 \vert\}\subset \R^2, \;F: \R \to \R^2,\;F(x) := (x,-x^2)^T$ and set $\xb := 0$.
Clearly $\nabla F(\xb)=(1,0)^T$ and $N_{\Gamma}(F(\xb))=\{y\in \R^2 \mv y_2 \leq -\vert y_1 \vert\}$,
hence $0 \neq \lambda:=(0,-1)^T \in \Lambda^0(\xb)$
and the Mordukhovich criterion (GMFCQ) is violated at $\bar x$.

Moreover, setting $x_k := 1/k$, $y^k:=F(\xb)=(0,0)^T$ and $\lambda^k := \lambda=(0,-1)^T$
we obtain $\lambda_{2}(F_2(x_k) - y^k_2)=-1(-1/(k^2)) > 0$,
showing that also quasi-normality is violated at $\bar x$.

On the other hand, since $N_{\Gamma}(F(\xb);\nabla F(\xb) u) = \emptyset$ for all $u \neq 0$,
FOSCMS and hence MSCQ are satisfied at $\bar x$.
\end{example}

\noindent
We point out that the set $\Gamma$ in Example \ref{ex: DirCQ} is convex, thus illustrating that even in the convex case one may not be able to
verify MSCQ using the non-directional conditions (GMFCQ, pseudo- and quasi-normality), but one may invoke a directional one (here FOSCMS).

Although the directional conditions FOSCMS and SOSCMS are similar in flavor, we point out that SOSCMS is only applicable in the case where $\Gamma$
has {\em disjunctive} structure. In this setting, there is yet another condition
due to Robinson \cite{Rob81} that ensures MSCQ.

The following proposition summarizes several important sufficient conditions for MSCQ,
other than GMFCQ, which have already been established in the literature and that are important to our study.
We point out, however, that the validity of these results will be a simple corollary of our refined analysis in Section \ref{sec:NewCQ}.

\begin{proposition}[Sufficient conditions for MSCQ]\label{The : CQsToMS}
 Let $\bar x$ be feasible for \eqref{eq:GMP}. Then under either of the following conditions MSCQ
holds at $\bar x$.
\begin{itemize}
\item[(i)] ({\cite[Theorem 5.2]{GuoYeZhang13}})quasi-normality (or pseudo-normality) holds at $\bar x$;
\item[(ii)] (\cite[Corollary 1]{GfrKla16}) FOSCMS holds at $\xb$;
\item[(iii)] (\cite[Corollary 1]{GfrKla16}) SOSCMS holds at $\xb$;
\item[(iv)] ({\cite[{Proposition 1}]{Rob81}}) $F$ is affine and $\Gamma$ is the union of finitely many convex polyhedra.
\end{itemize}
\end{proposition}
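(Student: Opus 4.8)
The plan is to prove each of the four items by showing they are instances of the directional pseudo-/quasi-normality framework that is developed in Section~\ref{sec:NewCQ}, since the paper announces that ``the validity of these results will be a simple corollary of our refined analysis.'' The backbone is Theorem~\ref{The : MainDQNtoMS}, asserting that directional pseudo-/quasi-normality implies MSCQ, together with the trivial fact that the non-directional pseudo-/quasi-normality of Definition~\ref{def:CQ} implies the directional versions (the directional normal cone is contained in the ordinary one, and every admissible sequence in the non-directional setting can be read as one in the directional setting along the zero direction, or along any direction $u$ since $N_\Gamma(F(\xb);\nabla F(\xb)u)\subset N_\Gamma(F(\xb))$).

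For item~(i), once directional pseudo-normality is known to imply MSCQ, it suffices to observe that pseudo-normality implies quasi-normality implies directional quasi-normality; the first implication is immediate from the definitions (the componentwise sign conditions $\bar\lambda_i(F_i(x^k)-y_i^k)>0$ for all $i$ with $\bar\lambda_i\neq 0$ sum up, after multiplying by the signs, to $\ip{\bar\lambda}{F(x^k)-y^k}>0$), and the second is the passage to the directional version noted above. For items~(ii) and~(iii), FOSCMS and SOSCMS should be shown to imply directional pseudo-normality: given $0\neq u$ with $\nabla F(\xb)u\in T_\Gamma(F(\xb))$ and the hypothesis of directional pseudo-normality producing a nonzero $\bar\lambda\in\Lambda^0(\xb;\nabla F(\xb)u)$ with an approximating sequence, one derives a contradiction with the FOSCMS implication directly in the first-order case; in the SOSCMS case one additionally needs the curvature inequality $u^T\nabla^2\ip{\lambda}{F}(\xb)u\geq 0$, which is exactly where the polyhedrality of $\Gamma$ and the refined second-order argument of Section~\ref{sec:NewCQ} enter (the sequence $x^k=\xb+t_ku+o(t_k)$ with $y^k\in\Gamma$, $\lambda^k\in\widehat N_\Gamma(y^k)$ and $\ip{\bar\lambda}{F(x^k)-y^k}>0$ forces the second-order term to be nonnegative by a Taylor expansion, since the first-order term vanishes). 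Finally, item~(iv) follows because, when $F$ is affine and $\Gamma$ is a union of finitely many convex polyhedra, the feasibility map $M(x)=F(x)-\Gamma$ is a polyhedral multifunction, so Robinson's Proposition~1 gives calmness, equivalently MSCQ; alternatively it will follow from the pseudo-normality characterization for disjunctive programs in Section~\ref{sec:NewCQ}.

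Concretely, I would carry out the steps in this order: (1) record the elementary implications GMFCQ $\Rightarrow$ pseudo-normality $\Rightarrow$ quasi-normality and pseudo-/quasi-normality $\Rightarrow$ their directional versions; (2) invoke Theorem~\ref{The : MainDQNtoMS} to get MSCQ from directional pseudo-/quasi-normality, which settles~(i); (3) prove FOSCMS $\Rightarrow$ directional pseudo-normality by contradiction, using the definition of $N_\Gamma(F(\xb);\nabla F(\xb)u)$ to identify the limiting multiplier, which settles~(ii); (4) prove SOSCMS $\Rightarrow$ directional pseudo-normality by the same scheme but with the extra second-order Taylor estimate on $\ip{\bar\lambda}{F(x^k)-y^k}>0$ forcing $u^T\nabla^2\ip{\lambda}{F}(\xb)u\geq 0$, which settles~(iii); (5) for~(iv), note affine $F$ plus union-of-polyhedra $\Gamma$ makes $M$ polyhedral and apply Robinson's result, or cite the disjunctive characterization.

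The main obstacle is step~(4): the second-order argument needs the expansion of $\dist_\Gamma\circ F$ (or of the perturbation $F(x^k)-y^k$) along the critical direction $u$ to be controlled precisely enough that the sign condition $\ip{\bar\lambda}{F(x^k)-y^k}>0$ yields the curvature inequality in the limit, and this relies crucially on $\Gamma$ being a union of convex polyhedra so that $y^k$ can be taken in the same polyhedral piece as $F(\xb)$ with $\lambda^k$ lying in its (locally constant) normal cone; making this rigorous is where the bulk of the work sits, and it is exactly the content deferred to Section~\ref{sec:NewCQ}. The other steps are essentially bookkeeping with the definitions. I would therefore present~(i)--(iii) as consequences of the Section~\ref{sec:NewCQ} machinery and~(iv) as either a direct appeal to \cite{Rob81} or, again, a special case of the disjunctive analysis, keeping the proof here short and forward-referencing.
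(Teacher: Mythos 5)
Your proposal is correct and follows essentially the same route as the paper, which likewise gives no self-contained proof at the statement itself but derives (i) and (ii) from Theorem~\ref{The : MainDQNtoMS} via the elementary implications pseudo-normality $\Rightarrow$ quasi-normality $\Rightarrow$ directional quasi-normality and FOSCMS $\Rightarrow$ directional quasi-normality, and obtains (iii) and (iv) from the disjunctive analysis (Corollaries~\ref{Cor:SOSCPN} and~\ref{Cor : MPN_Flin}, resting on the polyhedral subsequence argument of Lemma~\ref{lem:PolyProp} to eliminate the sequence $\{y^k\}$, exactly as you anticipate). Only trivial slips: write $\Lambda^0(\xb;u)$ rather than $\Lambda^0(\xb;\nabla F(\xb)u)$, and the componentwise conditions simply sum (no sign adjustment needed) to give $\ip{\bar\lambda}{F(x^k)-y^k}>0$.
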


As we can see, two of these conditions are applicable for the general program \eqref{eq:GMP} and are strictly milder than GMFCQ.
The other two are restricted to the special structure of disjunctive constraints and hence are in general not
comparable with GMFCQ.
Interestingly, all four conditions are mutually incomparable and were obtained by different approaches.
The only available comparison is for the disjunctive constraints, where FOSCMS clearly implies SOSCMS.

We will refer to (iv) in Proposition \ref{The : CQsToMS} as {\em Robinson's result}.
We point out, however, that \cite[{Proposition 1}]{Rob81} in fact contains a stronger statement.

\if{
\subsection{Exact penalization under MSCQ}
\noindent
Let us briefly discuss the role of  MSCQ in the context of exact penalization.
Note that the general mathematical program \eqref{eq:GMP}
is equivalent to the unconstrained (but extended real-valued) problem
\begin{equation}\label{eq:GMPUncon}
\min\; f(x)+\delta_\Gamma (F(x)).
\end{equation}

\noindent
A natural approximation for \eqref{eq:GMPUncon} (and hence \eqref{eq:GMP}) is given by minimization of the following penalty function
\begin{equation}\label{eq:PenaltyFunc}
P_\alpha:=f+\alpha \dist_\Gamma\circ F\quad (\alpha>0),
\end{equation}
 which is a classical technique employed to tackle program \eqref{eq:GMP}, see, e.g.,   \cite{Bu91, Bu91.2, GuoYeZhang13, HoKaOut10, HuRa04, KaS 10, LuPaRaWu96, QHYM18}.

The crucial issue is the exactness of the penalty function,
which holds true under MSCQ as is stated in the following theorem.
The proof essentially coincides with the proof of Theorem \cite[Theorem 4.5]{KaS 10}, but we provide it for the sake of completeness and also to realize that the special structure underlying in \cite{KaS 10} is not needed at all.

Note also that for general programs the following result was first
established in \cite[Proposition 3.5]{HoKaOut10} and it was based
on results by Burke \cite[Theorem 1.1]{Bu91} and Clarke \cite[Proposition 6.4.3]{Cla 83}.

\begin{theorem}\label{cor:ExPenMSCQ}
Let $\bar x$ be a local minimizer of \eqref{eq:GMP} such that
MSCQ holds at $\bar x$. Then the penalty function $P_\alpha$ from \eqref{eq:PenaltyFunc} is exact at $\bar x$, i.e., $\bar x$ is a local minimizer of $P_\alpha$ for all $\alpha >0$ sufficiently large. In particular, $\bar x$ is an M-stationary point of \eqref{eq:GMP}.
\end{theorem}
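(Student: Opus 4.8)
The plan is to prove the statement in two stages: first establish \emph{exactness} of $P_\alpha$ near $\bar x$ by a direct estimate that combines the local minimality of $\bar x$, the local Lipschitz continuity of $f$ (which follows from $f\in C^1$), and the MSCQ inequality of Definition~\ref{def:MSCQ}; then deduce \emph{M-stationarity} by applying Fermat's rule to the now unconstrained minimization of $P_\alpha$ together with the subdifferential calculus for the distance function recalled in Proposition~\ref{prop:Dist}.

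For the exactness part, I would fix a radius $R>0$ small enough that simultaneously (a) $\bB_R(\bar x)\subset U$, where $U$ is the MSCQ neighborhood with modulus $\kappa$; (b) $f$ is $L$-Lipschitz on $\bB_R(\bar x)$ for some $L>0$; and (c) $f(x)\geq f(\bar x)$ for all $x\in\cX\cap\bB_R(\bar x)$. Since $\cX=F^{-1}(\Gamma)$ is closed (as $F$ is continuous and $\Gamma$ closed) and contains $\bar x$, for any $x\in\bB_{R/2}(\bar x)$ we may pick $y\in P_\cX(x)$; then $\norm{x-y}=\dist_\cX(x)\leq\norm{x-\bar x}\leq R/2$, whence $\norm{y-\bar x}\leq R$, so $y\in\cX\cap\bB_R(\bar x)$ and both $x,y$ lie in the Lipschitz ball. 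Setting $\alpha_0:=L\kappa$, for every $\alpha\geq\alpha_0$ and every $x\in\bB_{R/2}(\bar x)$ one chains
$P_\alpha(x)=f(x)+\alpha\dist_\Gamma(F(x))\geq f(y)-L\norm{x-y}+\alpha\dist_\Gamma(F(x))\geq f(\bar x)-L\kappa\,\dist_\Gamma(F(x))+\alpha\dist_\Gamma(F(x))=f(\bar x)+(\alpha-L\kappa)\dist_\Gamma(F(x))\geq f(\bar x)$,
using $f(y)\geq f(\bar x)$, the MSCQ bound $\norm{x-y}=\dist_\cX(x)\leq\kappa\dist_\Gamma(F(x))$, and $\alpha\geq L\kappa$. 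Since $F(\bar x)\in\Gamma$ gives $P_\alpha(\bar x)=f(\bar x)$, this shows $\bar x$ is a local minimizer of $P_\alpha$ for all $\alpha\geq L\kappa$, which is the asserted exactness.

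For the M-stationarity part, local minimality of $\bar x$ for the unconstrained function $P_\alpha$ and Fermat's rule give $0\in\partial P_\alpha(\bar x)$. Because $f$ is continuously differentiable, the elementary sum rule yields $\partial P_\alpha(\bar x)\subset\nabla f(\bar x)^T+\alpha\,\partial(\dist_\Gamma\circ F)(\bar x)$, and Proposition~\ref{prop:Dist}(ii) followed by Proposition~\ref{prop:Dist}(i) (applicable since $F(\bar x)\in\Gamma$) gives $\partial(\dist_\Gamma\circ F)(\bar x)\subset\nabla F(\bar x)^T\big(N_\Gamma(F(\bar x))\cap\bB\big)$. Hence there is $\lambda\in N_\Gamma(F(\bar x))\cap\bB$ with $0=\nabla f(\bar x)^T+\alpha\nabla F(\bar x)^T\lambda$; putting $\bar\lambda:=\alpha\lambda$, which still lies in the cone $N_\Gamma(F(\bar x))$, yields $0=\nabla f(\bar x)^T+\nabla F(\bar x)^T\bar\lambda$, i.e.\ $\bar x$ is M-stationary for \eqref{eq:GMP}.

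The only delicate point is the radius bookkeeping in the second paragraph: one must ensure the Lipschitz estimate for $f$ applies at $x$ \emph{and} at its projection $y\in P_\cX(x)$ while $y$ still falls inside the region where $\bar x$ is known to minimize $f$ over $\cX$, which is exactly what the factor-of-two choice $x\in\bB_{R/2}(\bar x)$ secures. Everything else is routine given Proposition~\ref{prop:Dist}.
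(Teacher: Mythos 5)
Your proposal is correct and follows essentially the same route as the paper's own proof: the exactness estimate via a projection $y\in P_\cX(x)$, the local Lipschitz constant $L$ of $f$, the MSCQ bound $\dist_\cX(x)\le\kappa\dist_\Gamma(F(x))$, and the same factor-of-two radius bookkeeping (the paper uses $\varepsilon$ and $2\varepsilon$ where you use $R/2$ and $R$). The M-stationarity step is likewise identical in substance, combining Fermat's rule, the sum rule for locally Lipschitz functions, and Proposition~\ref{prop:Dist}.
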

\begin{proof} By MSCQ at $\bar x$ there exist $\delta,\kappa>0$ such that
\[
\dist_{\cX}(x)\leq \kappa \dist_{\Gamma}(F(x))\quad (x\in \bB_{\delta}(\bar x)).
\]
As $\bar x$ is a local minimizer of $f$ over $\cX$, we can choose $0<\varepsilon<\delta/2$ such that $\bar x\in \argmin_{x\in \bB_{2\varepsilon}(\bar x)\cap\cX} f(x)$. Since $f$ is locally Lipschitz, by compactness, $f$ is $L$-Lipschitz on $\bB_{2\varepsilon}(\bar x)$ for some $L>0$. Now let  $x\in \bB_{\varepsilon}(\bar x)$. In particular, we find $y\in P_{\cX}(x) \subset \cX\cap \bB_{2\varepsilon}(\bar x)$, hence  it follows that
\[
f(\bar x)\leq f(y)\leq f(x)+L\|y-x\|=f(x)+L\dist_{\cX}(x)\leq f(x)+\kappa L \dist_{\Gamma}(F(x)).
\]
This shows that $\bar x$ is a local minimizer of $f+\kappa L\dist_{\Gamma}\circ F$
and the exactness of $P_\alpha$ follows.
Moreover, applying a nonsmooth Fermat's rule (cf. \cite[Theorem 10.1]{RoW 98}), invoking \cite[Exercise 10.10]{RoW 98} and Proposition \ref{prop:Dist} yields
\[
0\in \partial (f+\kappa L\dist_{\Gamma}\circ F)(\bar x) = \nabla f(\bar x)+\nabla F(\bar x)^TN_{\Gamma}(F(\bar x)),
\]
which gives the M-stationarity at $\xb$.
\end{proof}
}\fi

\section{New constraint qualifications for GMP}\label{sec:NewCQ}

\noindent
In this section we are primarily concerned with constraint qualifications for the general mathematical program  \eqref{eq:GMP}.
In particular, we investigate directional counterparts of pseudo- and quasi-normality introduced in \cite{BYZ19},
and introduce a new CQ called PQ-normality that unifies pseudo- and quasi-normality. We then show that each of these CQs implies MSCQ,
and hence recover statements (i) and (ii) of Proposition \ref{The : CQsToMS}.
Afterwards, we propose various sufficient conditions for these CQs under some additional structural assumptions.
Hence, when applied to the disjunctive constraints in Section \ref{sec:Disjunctive Sets},
these conditions also recover statements (iii) and (iv) of Proposition \ref{The : CQsToMS}.

\subsection{Directional constraint qualifications and PQ-normality}\label{sec:DirCQ}

\noindent
In \cite[Corollary 4.1]{BYZ19}, it was shown that metric subregularity is implied by directional quasi-normality, see Definition \ref{def:DirCQ}.
Here, we propose a different proof that follows the techniques used, e.g., in \cite[Lemma 4.4]{KaS 10} and \cite[Lemma 5.1.]{GuoYeZhang13}.
Note that our main tools are the Ekeland's variational principle and the rich subdifferential calculus for the distance function from Proposition \ref{prop:Dist},
and this approach is novel even is the nondirectional setting.
We start with the following observation, where we invoke definitions of $\Lambda^0(\xb;u)$ and $\Lambda^0(\xb)$.

\begin{lemma} \label{Lem : DirViolMSCQ}
Let $\bar x$ be feasible for \eqref{eq:GMP} such that MSCQ is violated at $\xb$.
Then there exist sequences $\{x^k\notin \cX\} \to \xb$ and
$\{\xi^k \in \p \left(\dist_\Gamma\circ F\right)(x^k)\}\to 0$ as well as
 $u\in \R^n\setminus\{0\}$ with $\|u\|=1$ such that
\begin{equation}\label{eq : DirVms}
\frac{x^k - \xb}{\norm{x^k - \xb}} \to u,\quad \frac{y^k - F(\xb)}{\norm{x^k - \xb}} \to \nabla F(\xb)u\quad (y^k\in P_\Gamma(F(x^k)))\AND  \nabla F(\xb) u \in T_{\Gamma}(F(\xb)).
\end{equation}
\end{lemma}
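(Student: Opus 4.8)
The plan is to exploit the failure of MSCQ to construct a sequence of points $x^k \notin \cX$ converging to $\xb$ along which the penalty-type function $\dist_\Gamma \circ F$ decreases too slowly, and then to apply Ekeland's variational principle to each such point to obtain nearby points at which this function is \emph{minimized} up to a small linear perturbation; the subdifferential of that perturbed function at the Ekeland point produces the desired $\xi^k \to 0$. Concretely, negating MSCQ with $U = \bB_{1/k}(\xb)$ and $\kappa = k$ yields $\hat x^k \to \xb$ with $\dist_\cX(\hat x^k) > k\, \dist_\Gamma(F(\hat x^k))$; note $\hat x^k \notin \cX$ and set $r_k := \dist_\Gamma(F(\hat x^k)) > 0$, so that $\hat x^k$ is an $r_k$-minimizer of $g := \dist_\Gamma \circ F$ over $\R^n$ (since $g \geq 0$ and $g$ is small at $\hat x^k$).

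Next I would apply Ekeland's variational principle to $g$ at $\hat x^k$ with $\varepsilon = r_k$ and $\delta = \sqrt{r_k}$ (or more carefully $\delta_k \downarrow 0$ chosen so that $\varepsilon/\delta \to 0$ but $\delta_k$ still dominates in the relevant sense), obtaining $x^k$ with $\|x^k - \hat x^k\| \leq \sqrt{r_k} \to 0$, $g(x^k) \leq g(\hat x^k)$, and $x^k$ the unique minimizer of $g + \delta_k \|\cdot - x^k\|$. Hence $0 \in \p(g + \delta_k\|\cdot - x^k\|)(x^k) \subset \p g(x^k) + \delta_k \bB$, which gives $\xi^k \in \p g(x^k)$ with $\|\xi^k\| \leq \delta_k \to 0$; moreover $x^k \to \xb$ and $x^k \notin \cX$ for large $k$ (since $g(x^k) > 0$ — one must check the Ekeland point is still infeasible, which follows because $g(x^k) \leq g(\hat x^k) = r_k$ is compatible with $g(x^k) > 0$, but strictly one should argue $g(x^k) \geq$ something positive; alternatively replace $x^k$ by $\hat x^k$ itself in the part of the claim asking $x^k \notin \cX$, being careful that the same $x^k$ must serve both roles). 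The directional information in \eqref{eq : DirVms} is then extracted by passing to a subsequence: $\frac{x^k - \xb}{\|x^k - \xb\|}$ lies on the unit sphere, hence subconverges to some $u$ with $\|u\| = 1$; since $F$ is $C^1$, $\frac{F(x^k) - F(\xb)}{\|x^k - \xb\|} \to \nabla F(\xb) u$, and taking $y^k \in P_\Gamma(F(x^k))$ one has $\|y^k - F(x^k)\| = \dist_\Gamma(F(x^k)) = g(x^k) \leq r_k + o(\|x^k - \xb\|)$, which (after checking $g(x^k) = o(\|x^k - \xb\|)$, the crux) forces $\frac{y^k - F(\xb)}{\|x^k - \xb\|} \to \nabla F(\xb) u$ as well, and since $y^k \in \Gamma$ with $\frac{y^k - F(\xb)}{\|x^k-\xb\|}$ converging, the limit $\nabla F(\xb) u$ lies in $T_\Gamma(F(\xb))$.

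The main obstacle I anticipate is the quantitative bookkeeping that makes the normalized differences converge to the \emph{same} direction $u$: one needs $\dist_\Gamma(F(x^k)) = o(\|x^k - \xb\|)$, i.e. that the penalty value is negligible compared to the distance to $\xb$. This is where the MSCQ violation $\dist_\cX(\hat x^k) > k\, \dist_\Gamma(F(\hat x^k))$ must be used decisively: it forces $\dist_\Gamma(F(\hat x^k)) < \frac{1}{k}\dist_\cX(\hat x^k) \leq \frac{1}{k}\|\hat x^k - \xb\|$, and then one must carefully track how the Ekeland perturbation (size $\sqrt{r_k}$) compares with $\|x^k - \xb\|$ to ensure $r_k = o(\|x^k - \xb\|)$ survives the passage from $\hat x^k$ to $x^k$. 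A clean way to handle this is to choose the Ekeland parameter $\delta_k$ so that $\varepsilon_k/\delta_k = r_k/\delta_k$ is $o(\|\hat x^k - \xb\|)$ — e.g. $\delta_k := \sqrt{r_k / \|\hat x^k - \xb\|}$ when the denominators behave — and then verify $\|x^k - \xb\| \sim \|\hat x^k - \xb\|$, so the normalized direction is inherited. I would also double-check the edge case where $\hat x^k = \xb$ is impossible (it is, since $g(\hat x^k) > 0 = g(\xb)$) and that Proposition \ref{prop:Dist}(ii) is what licenses writing $\xi^k \in \p(\dist_\Gamma \circ F)(x^k)$ directly rather than through $\nabla F(x^k)^T \p\dist_\Gamma(F(x^k))$.
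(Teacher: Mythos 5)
Your overall strategy is the paper's: negate MSCQ to get $\hat x^k$ with $\dist_{\cX}(\hat x^k)>k\,\dist_\Gamma(F(\hat x^k))$, note that $\hat x^k$ is an $r_k$-minimizer of $g:=\dist_\Gamma\circ F$ with $r_k:=\dist_\Gamma(F(\hat x^k))$, apply Ekeland plus Fermat's rule to produce $\xi^k\in\p g(x^k)\to 0$, and normalize. However, the two points you flag as ``obstacles to check'' are genuine gaps, and the parameter choices you propose do not close them. First, with $\delta=\sqrt{r_k}$ (or $\delta_k=\sqrt{r_k/\|\hat x^k-\xb\|}$) Ekeland only gives $\|x^k-\hat x^k\|\le r_k/\delta_k$, and there is no reason this is smaller than $\dist_{\cX}(\hat x^k)$; so $x^k\notin\cX$ is not established (``compatible with $g(x^k)>0$'' is not an argument, and $x^k\in\cX$ would also make the normalization $\frac{x^k-\xb}{\|x^k-\xb\|}$ potentially meaningless if $x^k=\xb$). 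The correct choice is $\delta_k=1/k$: then $\|x^k-\hat x^k\|\le k r_k<\dist_{\cX}(\hat x^k)$, which forces $x^k\notin\cX$ directly from the MSCQ violation.

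Second, the ``crux'' $\dist_\Gamma(F(x^k))=o(\|x^k-\xb\|)$ should not be obtained by bookkeeping between $\hat x^k$ and $x^k$ (your suggestion to verify $\|x^k-\xb\|\sim\|\hat x^k-\xb\|$ and transport the bound $r_k<\tfrac1k\|\hat x^k-\xb\|$ is both unnecessary and, with your $\delta_k$, still does not rescue infeasibility). The paper gets it in one line from the Ekeland conclusion itself: since $x^k$ globally minimizes $g+\tfrac1k\|\cdot-x^k\|$ and $g(\xb)=0$, comparing values at $x^k$ and at $\xb$ gives
\begin{equation*}
\dist_\Gamma(F(x^k))=g(x^k)\le g(\xb)+\tfrac1k\|\xb-x^k\|=\tfrac1k\|x^k-\xb\|,
\end{equation*}
whence $\frac{\|y^k-F(x^k)\|}{\|x^k-\xb\|}\le\frac1k\to0$ for $y^k\in P_\Gamma(F(x^k))$, and the convergence $\frac{y^k-F(\xb)}{\|x^k-\xb\|}\to\nabla F(\xb)u$ follows from the triangle inequality and differentiability of $F$. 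With these two corrections your argument coincides with the paper's proof; your final remark is also slightly off, since Proposition \ref{prop:Dist}(ii) is not needed in this lemma at all (it only enters in the proof of Theorem \ref{th:Nonexist}) --- here one only needs $\xi^k\in\p(\dist_\Gamma\circ F)(x^k)+\tfrac1k\bB$ from Fermat's rule and the Lipschitz sum rule.
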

\begin{proof} Violation of MSCQ at $\bar x$ readily yields a sequence $\{\tilde x^k\} \to \xb$ with $\dist_\cX(\tilde x^k)>k \dist_\Gamma(F(\tilde x^k))$.
We put $\varepsilon_k:=\dist_{\Gamma}(F(\tilde x^k))$ and find that $\tilde x^k$ is an $\varepsilon_k$-minimizer of $\dist_\Gamma \circ F$ for all $k\in \N$.
Hence by Ekeland's variational principle \cite[Proposition 1.43]{RoW 98} with $\delta=\frac{1}{k}\;(k\in \N)$, there exists a sequence $\{x^k\}$ such that
$x^k=\argmin\left\{\dist_\Gamma\circ F+\frac{1}{k}\|(\cdot)- x^k\|\right\}$  and $\|x^k-\tilde x^k\|\leq k\varepsilon_k<\dist_{\cX}(\tilde x^k)$ for all $k\in \N$.
This implies $\{x^k\notin \cX\}\to \bar x$ as well as
$0\in \p(\dist_\Gamma\circ F)(x^k)+\frac{1}{k}\bB$ for all $k\in \N$
by applying a nonsmooth Fermat's rule (cf. \cite[Theorem 10.1]{RoW 98})
and invoking a sum rule for locally Lipschitz functions (cf. \cite[Exercise 10.10]{RoW 98}).
In particular, there exists a sequence $\{\xi^k\in \p(\dist_\Gamma\circ F)(x^k)\}\to 0$. As $x^k\neq \bar x$, w.l.o.g. we may assume that $\frac{x^k-\bar x}{\|x^k-\bar x\|}\to u$  with $\|u\|=1$. Now let $y^k\in P_{\Gamma}(F(x^k))$ for all $k\in \N$. Then
\begin{equation}\label{eq:AuxIneq}
\left\|\frac{y^k-F(\bar x)}{\|x^k-\bar x\|}-\nabla F(\bar x)u\right\|  \leq  \frac{\|y^k-F(x^k)\|}{\|x^k-\bar x\|}+\left\|\frac{F(x^k)-F(\bar x)}{\|x^k-\bar x\|}-\nabla F(\bar x)u\right\|\quad (k\in \N).
\end{equation}
As $x^k$ minimizes $\dist_\Gamma\circ F+\frac{1}{k}\|(\cdot)- x^k\|$ for all $k\in \N$, we find that
$\dist_{\Gamma}(F(x^k)) \leq 1/k \|\bar x-x^k\|$.
Hence we infer that the first term on the right in \eqref{eq:AuxIneq} satisfies
\[
\frac{\|y^k-F(x^k)\|}{\|x^k-\bar x\|}=\frac{\dist_{\Gamma} (F(x^k))}{\|x^k-\bar x\|}\leq \frac{1}{k}\to 0.
\]
The second term on the right in \eqref{eq:AuxIneq} goes to zero by differentiability of $F$ and we conclude from \eqref{eq:AuxIneq} that $\frac{y^k-F(\bar x)}{\|x^k-\bar x\|}\to\nabla F(\bar x)u$. Finally, as $y^k\in \Gamma$ for all $k\in \N$, we have $\nabla F(\bar x)u\in T_{\Gamma}(F(\bar x))$.
\end{proof}

\begin{theorem}\label{th:Nonexist}
 Let $\xb$ be feasible for \eqref{eq:GMP} and assume that the following holds: For every
 $ u\in \R^n$  with $\|u\|=1$ and  $\nabla F(\xb) u \in T_{\Gamma}(F(\xb))$
 there does \underline{not} exist a nonzero $\bar \lambda \in \Lambda^0(\xb;u)$
 that satisfies the following condition:
 There exists a sequence $\{(x^k,y^k,\lambda^k) \in \R^n\times \Gamma\times \R^d\}\to (\bar x,F(\bar x), \bar \lambda)$ such that  for all $k\in \N$ we have
 \begin{eqnarray*}
 (x^k - \xb)/\norm{x^k - \xb} \to u, && (y^k - F(\xb))/\norm{x^k - \xb} \to \nabla F(\xb) u,\\
 \lambda^k\in \widehat N_{\Gamma}(y^k), &  & \bar \lambda_i (F_i(x^k)-y_i^k)>0 \quad(\bar \lambda_{i}\neq 0).
 \end{eqnarray*}
  Then MSCQ is fulfilled at $\xb$.
\end{theorem}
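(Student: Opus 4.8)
The plan is to argue by contradiction: suppose MSCQ fails at $\xb$, and derive from Lemma \ref{Lem : DirViolMSCQ} a unit vector $u$ with $\nabla F(\xb)u \in T_\Gamma(F(\xb))$ together with sequences $\{x^k \notin \cX\} \to \xb$, $\{\xi^k \in \p(\dist_\Gamma \circ F)(x^k)\} \to 0$, and projections $y^k \in P_\Gamma(F(x^k))$ satisfying the two fractional-convergence relations in \eqref{eq : DirVms}. The goal is then to manufacture, out of this data, a nonzero $\bar\lambda \in \Lambda^0(\xb;u)$ and an approximating sequence $\{(x^k,y^k,\lambda^k)\}$ witnessing the forbidden condition, contradicting the hypothesis of the theorem.

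First I would unpack the subgradient $\xi^k$. Since $x^k \notin \cX$, we have $F(x^k) \notin \Gamma$, and Proposition \ref{prop:Dist}(i) gives $\p \dist_\Gamma(F(x^k)) = \{(F(x^k) - P_\Gamma(F(x^k)))/\dist_\Gamma(F(x^k))\}$; combined with the chain-rule inclusion in Proposition \ref{prop:Dist}(ii), $\xi^k \in \nabla F(x^k)^T \lambda^k$ where
\[
\lambda^k := \frac{F(x^k) - y^k}{\dist_\Gamma(F(x^k))} = \frac{F(x^k) - y^k}{\|F(x^k) - y^k\|},
\]
a unit vector. By Lemma \ref{lem:ProjNorm} (the projection characterization, $t(z - P_\Gamma(z)) \in \widehat N_\Gamma(P_\Gamma(z))$), we get $\lambda^k \in \widehat N_\Gamma(y^k)$. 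Since $\|\lambda^k\| = 1$, after passing to a subsequence $\lambda^k \to \bar\lambda$ with $\|\bar\lambda\| = 1$, so $\bar\lambda \neq 0$. Because $y^k \to F(\xb)$ (as $\dist_\Gamma(F(x^k)) \to 0$) and $\lambda^k \in \widehat N_\Gamma(y^k)$, the definition of the limiting normal cone yields $\bar\lambda \in N_\Gamma(F(\xb))$; moreover, using $y^k \to F(\xb)$ along the direction $\nabla F(\xb)u$ — precisely the relation $(y^k - F(\xb))/\|x^k - \xb\| \to \nabla F(\xb)u$ from \eqref{eq : DirVms} — one upgrades this to $\bar\lambda \in N_\Gamma(F(\xb); \nabla F(\xb)u)$, since $y^k = F(\xb) + t_k d^k$ with $t_k := \|x^k - \xb\| \downarrow 0$ and $d^k := (y^k - F(\xb))/t_k \to \nabla F(\xb)u$. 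Next, $\xi^k = \nabla F(x^k)^T\lambda^k \to 0$ together with $\nabla F(x^k) \to \nabla F(\xb)$ (continuous differentiability) and $\lambda^k \to \bar\lambda$ gives $\nabla F(\xb)^T\bar\lambda = 0$, i.e. $\bar\lambda \in \ker\nabla F(\xb)^T$. Hence $\bar\lambda \in \Lambda^0(\xb;u) \setminus \{0\}$.

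It remains to check the sign condition $\bar\lambda_i(F_i(x^k) - y_i^k) > 0$ for each index $i$ with $\bar\lambda_i \neq 0$, possibly after modifying the sequence. We have $F_i(x^k) - y_i^k = \dist_\Gamma(F(x^k))\,\lambda^k_i$, so $\bar\lambda_i(F_i(x^k) - y_i^k) = \dist_\Gamma(F(x^k))\,\bar\lambda_i\lambda^k_i$, and since $\lambda^k_i \to \bar\lambda_i \neq 0$ we have $\bar\lambda_i\lambda^k_i > 0$ for all large $k$; as $\dist_\Gamma(F(x^k)) > 0$, the product is strictly positive for all large $k$. The same algebra shows the first-coordinate relations $(x^k - \xb)/\|x^k - \xb\| \to u$ and $(y^k - F(\xb))/\|x^k - \xb\| \to \nabla F(\xb)u$ are already in hand from Lemma \ref{Lem : DirViolMSCQ}. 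Passing to the relevant subsequence, the triple $\{(x^k,y^k,\lambda^k)\}$ then exhibits exactly the forbidden configuration for this $u$ and this $\bar\lambda$, contradicting the standing hypothesis. Therefore MSCQ holds at $\xb$.

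I expect the main obstacle to be the directional refinement of the normal-cone membership — i.e. verifying that $\bar\lambda$ lands in the \emph{directional} limiting normal cone $N_\Gamma(F(\xb); \nabla F(\xb)u)$ rather than merely in $N_\Gamma(F(\xb))$. This hinges on the precise rate information $(y^k - F(\xb))/\|x^k - \xb\| \to \nabla F(\xb)u$ supplied by Lemma \ref{Lem : DirViolMSCQ}, which is what makes the directional (as opposed to non-directional) conclusion available; the bookkeeping of which sequences are the $t_k$'s and $d^k$'s in the definition of $N_\Gamma(\cdot;\cdot)$ is the one place where care is genuinely required. Everything else — the norming of $\lambda^k$, the passage to subsequences, the sign check — is routine.
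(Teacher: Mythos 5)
Your proposal is correct and follows essentially the same route as the paper's own proof: contradiction via Lemma \ref{Lem : DirViolMSCQ}, unpacking $\xi^k$ through Proposition \ref{prop:Dist}, normalizing $\lambda^k:=(F(x^k)-y^k)/\dist_\Gamma(F(x^k))\in\widehat N_\Gamma(y^k)$, extracting a nonzero limit $\bar\lambda\in\Lambda^0(\xb;u)$ (with the directional membership coming from the rate $(y^k-F(\xb))/\|x^k-\xb\|\to\nabla F(\xb)u$), and verifying the sign condition componentwise for large $k$. The only cosmetic difference is that you invoke the projection--normal-cone fact as a separate lemma where the paper cites \cite[Example 6.16]{RoW 98} directly; the argument is otherwise identical.
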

\begin{proof} Assume that MSCQ is not satisfied at $\xb$. Consider
sequences $\{x^k\notin \cX\}\to \bar x$, $\{\xi^k\in \p \left(\dist_\Gamma\circ F\right)(x^k)\}\to 0$ and $u\in \R^n$  with $\|u\|=1$ provided by Lemma \ref{Lem : DirViolMSCQ}.
Recall that
\begin{equation*}
\partial (\dist_\Gamma\circ F)(x)\subset \nabla F(x)^T\partial \dist_\Gamma(F(x)) \quad (x\in \R^n),
\end{equation*}
see Proposition \ref{prop:Dist} (ii). Moreover, by Proposition \ref{prop:Dist} (i), it holds that
\begin{equation*}
\p \dist_\Gamma(F(x^k))=\frac{F(x^k)-P_\Gamma(F(x^k))}{\dist_\Gamma(F(x^k))}\quad (k\in \N),
\end{equation*}
since $x^k\notin \cX\;(k\in \N)$. Consequently, there exists $\{y^k\in P_\Gamma(F(x^k))\}$ such that with
\begin{equation}\label{eq:lambda_k}
\lambda^k:=\frac{F(x^k)-y^k}{\dist_{\Gamma}(F(x^k))}
\end{equation}
we have
\begin{equation}\label{eq:xi_k}
 \xi^k=\nabla F(x^k)^T \lambda^k \AND\|\lambda^k\|=1\quad  (k\in \N).
\end{equation}
Moreover, by the definition of $\lambda^k$ in \eqref{eq:lambda_k} and the fact that $y^k\in P_{\Gamma}(F(x^k))\, (k\in \N)$,
\cite[Example 6.16]{RoW 98} implies that
\begin{equation*}
\lambda^k\in \widehat N_\Gamma(y^k)\quad (k\in \N).
\end{equation*}
Since $\{\lambda^k\}$ is bounded, we may assume w.l.o.g. that
$\lambda^k\to \bar \lambda$ for some $\bar \lambda \neq 0 $.
Then from   \eqref{eq:lambda_k} we  infer that $y^k\to F(\bar x)$.
 Hence, passing to the limit in \eqref{eq:xi_k} we obtain
\begin{equation*}
0=\nabla F(\bar x)^T \bar \lambda \AND \bar \lambda \neq 0.
\end{equation*}
Now, if $\bar \lambda_i>0$ then w.l.o.g. $F_i(x^k)-y^k_i=\dist_\Gamma(F(x^k)) \lambda_i^k>0$ and hence $\bar \lambda_i(F_i(x^k)-y_i^k)>0$. Analogously, we argue for $\bar \lambda_i<0$. Altogether, we find that
\begin{equation*}
\bar \lambda_i(F_i(x^k)-y_i^k)>0\quad \text{if}\quad \bar \lambda_i\neq 0\quad (k\in \N).
\end{equation*}
Finally, Lemma \ref{Lem : DirViolMSCQ} yields that
$(y^k - F(\xb))/\norm{x^k - \xb} \to \nabla F(\xb) u$,
showing $\bar \lambda \in N_{\Gamma}(F(\xb),\nabla F(\xb)u)$, which establishes a contradiction.
\end{proof}

\noindent
Instead of directly extracting directional versions of quasi- and pseudo-normality from Theorem \ref{th:Nonexist}, we
introduce the notion of PQ-normality which serves as a bridge between pseudo- and quasi-normality,
which are then identified as the two extreme cases of PQ-normality. We strongly emphasize that introducing PQ-normality
does not merely serve the academic purpose of unifying
the two concepts. In fact, it has important consequences for the class of programs in Section \ref{sec:DisjProd}
where the set $\Gamma$ possesses an underlying product structure in addition to its disjunctive nature.

First, we introduce additional notation.
For $z \in \R^d$ we denote by $z_i\;(i \in I := \{1,\ldots,d\})$ its scalar components.
More generally, suppose that $\R^d$ is expressed via $l (\leq d)$ factors as $\R^{d_1} \times \ldots \times \R^{d_l}$
and introduce the $d$ multi-indices $\mli:= (d_1, \ldots, d_l) \in \N^l$
with $\vert \mli \vert := d_1 + \ldots + d_l = d$.
Note that there is a one-to-one correspondence between such multi-indices
and factorizations of $\R^d$.
The components of some $z \in \R^d$ we denote as $z_\nu$
for $\nu \in I_{\mli}$, where $I_{\mli}$ is some
(abstract) index set of $l$ elements.
Note that we do not identify $I_{\mli}$ with $\{1,\ldots,l\}$ in order
to avoid ambiguity of notation,
e.g., $z_1 \subset \R$ stands only for the first, scalar, component of $z$.
Moreover, we use a Greek letter to indicate the vector components $z_\nu$ of $z$
and a Latin letter to indicate the scalar components $z_i$.

Given a multi-index $\mli$ fix $\nu \in I_{\mli}$.
The component $z_\nu$, vector in general, can also be written via its
scalar components, i.e., there exists an index set, denoted by $I^{\nu}$,
such that $z_\nu = (z_i)_{i \in I^{\nu}}$.
Note that $\cup_{\nu \in I_{\mli}} I^{\nu} = I$.
Finally, given two multi-indices $\mli,\mli^{\prime}$
with $\vert \mli \vert = \vert \mli^{\prime} \vert = d$,
we say that $\mli^{\prime}$ is a {\em refinement} of
$\mli$ and write
$\mli^{\prime} \subset \mli$, provided
for every $\nu \in I_{\mli}$ there exists an index set
$I^{\nu}_{\mli^{\prime}}$ such that
\[z_\nu=(z_{\nu^{\prime}})_{\nu^{\prime} \in I^{\nu}_{\mli^{\prime}}}
 \ \textrm{ and } \ I_{\mli^{\prime}} = \cup_{\nu \in I_{\mli}} I^{\nu}_{\mli^{\prime}}.
\]
Note that the special multi-indices
$\mli^P:= d \in \N^1$ and $\mli^Q:= (1,\ldots,1) \in \N^{d}$
are in fact maximal and minimal in the sense that
for any multi-index $\mli \in \N^l$ with $\vert \mli \vert = d$
one has $\mli^Q \subset \mli \subset \mli^P$.

The following example illustrates the use of the above notation.

\begin{example}
Let $d=7$, $I:=\{1,\ldots,7\}$ and consider a multi-index $\mli:=(1,4,2)$ corresponding
to the factorization $\R^7 = \R \times \R^4 \times \R^2$.
Consider also an element $z=(z_1,\ldots,z_7) \in \R^7$.
Since $\delta$ has three components,  we may set, e.g.,
$I_{\mli} = \{a,b,c\}$ yielding
$z_a = z_1$, $z_b = (z_2,z_3,z_4,z_5)$ and $z_c = (z_6,z_7)$.
Clearly, we have $I^{a} = \{1\}$, $I^{b} = \{2,3,4,5\}$ and $I^{c} = \{6,7\}$.

Moreover, the multi-index $\mli^{\prime}:=(1,3,1,1,1)$
is a refinement of $\mli$, since we may set
\[I^{a}_{\mli^{\prime}} := \{a\}, \ I^{b}_{\mli^{\prime}}:= \{b_1,b_2\} \textrm{ and } I^{c}_{\mli^{\prime}}:= \{c_1,c_2\}\]
to obtain
\begin{eqnarray*}
&z_a = z_1, \ z_{b_1} = (z_2,z_3,z_4), \ z_{b_2}=z_5,
\ z_{c_1} = z_6, \ z_{c_2} = z_7,& \\
\textrm{and } & I_{\mli^{\prime}} = I^{a}_{\mli^{\prime}} \cup
I^{b}_{\mli^{\prime}} \cup I^{c}_{\mli^{\prime}}
=\{a,b_1,b_2,c_1,c_2\}, \quad
z_a = z_a, \ z_b = (z_{b_1},z_{b_2}), \ z_c = (z_{c_1},z_{c_2}).&
\end{eqnarray*}
\end{example}

We now proceed with the definition of PQ-normality which embeds quasi- and pseudo-normality as extremal cases in a whole family of constraint qualifications.

\begin{definition}[PQ-normality]\label{def:DirCQ} Let $\bar x\in\cX$ be feasible for \eqref{eq:GMP}, consider $u \in \R^n$ with $\|u\|=1$,  and let  $\mli \in \N^{l}$ be a multi-index such that $|\delta|=d$.
We say that
\begin{itemize}
\item[(i)] {\em PQ-normality w.r.t. $\mli$} holds at $\bar x$, if \underline{there is no nonzero}
$\bar \lambda \in \Lambda^0(\xb)$ that satisfies the following condition:
 There exists a sequence $\{(x^k,y^k,\lambda^k) \in \R^n\times \Gamma\times \R^d\}\to (\bar x,F(\bar x), \bar \lambda)$ with $\lambda^k\in \widehat N_{\Gamma}(y^k)$
 and
\begin{equation} \label{eqn : DirPQN2}
 \ip{\bar \lambda_{\nu}}{F_{\nu}(x^k)-y_{\nu}^k} >0 \quad \
 (\nu \in I_{\mli}(\bar\lambda) := \{\nu \in I_{\mli} \mv
 \bar \lambda_{\nu} \neq 0\}, \, k\in\N).
 \end{equation}
\item[(ii)] {\em PQ-normality w.r.t. $\mli$ in direction $u$} holds at $\bar x$, if \underline{there is no nonzero} $\bar \lambda \in \Lambda^0(\xb;u)$ that satisfies the following condition:
 There exists a sequence $\{(x^k,y^k,\lambda^k) \in \R^n\times \Gamma\times \R^d\}\to (\bar x,F(\bar x), \bar \lambda)$ with $\lambda^k\in \widehat N_{\Gamma}(y^k)$, \eqref{eqn : DirPQN2} and
 \begin{equation} \label{eqn : DirPN1}
 (x^k - \xb)/\norm{x^k - \xb} \to u, \quad (y^k - F(\xb))/\norm{x^k - \xb} \to \nabla F(\xb) u.
 \end{equation}
\end{itemize}
We say that {\em directional PQ-normality w.r.t. $\mli$} holds at $\bar x$, if
PQ-normality w.r.t. $\mli$ in direction $u$ holds at $\xb$ for all $u \in \R^n$ with $\|u\|=1$.
In particular, we refer to PQ-normality w.r.t. $\mli^P$ (in direction $u$)
as pseudo-normality (in direction $u$), while PQ-normality w.r.t. $\mli^Q$
we call quasi-normality.
\end{definition}

It is clear from the definition that PQ-normality w.r.t. $\mli$ implies PQ-normality w.r.t. $\mli^{\prime}$
provided $\mli^{\prime} \subset \mli$.
In particular, since $\mli^Q \subset \mli \subset \mli^P$ for all
$\mli \in \N^l$ with $\vert \mli \vert = d$,
we conclude that pseudo-normality implies
PQ-normality w.r.t. any $\mli$ and this further implies quasi-normality.
Naturally, all of the above comments remain true for the corresponding directional CQs.

For the sake of completeness, we reformulate Theorem \ref{th:Nonexist} in terms of directional PQ-normality.

\begin{theorem} \label{The : MainDQNtoMS}
 Let $\xb$ be feasible for \eqref{eq:GMP} and let the directional PQ-normality w.r.t. any $\mli \in \N^l$,
 in particular directional pseudo- or quasi-normality, hold at $\xb$.
 Then MSCQ is fulfilled at $\xb$.
\end{theorem}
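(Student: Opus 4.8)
The plan is to derive Theorem \ref{The : MainDQNtoMS} directly from Theorem \ref{th:Nonexist}, by observing that the hypothesis of Theorem \ref{th:Nonexist} is exactly directional quasi-normality, i.e.\ directional PQ-normality w.r.t.\ the minimal multi-index $\mli^Q = (1,\ldots,1)$. First I would unpack the statement of Theorem \ref{th:Nonexist}: it says that if for every unit $u$ with $\nabla F(\xb)u \in T_\Gamma(F(\xb))$ there is no nonzero $\bar\lambda \in \Lambda^0(\xb;u)$ admitting a sequence $\{(x^k,y^k,\lambda^k)\} \to (\xb,F(\xb),\bar\lambda)$ with $\lambda^k \in \widehat N_\Gamma(y^k)$, the directional scaling \eqref{eqn : DirPN1}, and the componentwise sign conditions $\bar\lambda_i(F_i(x^k)-y_i^k)>0$ whenever $\bar\lambda_i \neq 0$, then MSCQ holds at $\xb$. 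Comparing with Definition \ref{def:DirCQ}, the scalar index set $I = \{1,\ldots,d\}$ is precisely $I_{\mli^Q}$, the scalar sign conditions $\bar\lambda_i(F_i(x^k)-y_i^k)>0$ coincide with \eqref{eqn : DirPQN2} for $\mli = \mli^Q$ (since for $\nu = i$ scalar, $\ip{\bar\lambda_\nu}{F_\nu(x^k)-y_\nu^k} = \bar\lambda_i(F_i(x^k)-y_i^k)$), and the requirement ``$\nabla F(\xb)u \in T_\Gamma(F(\xb))$'' is automatic whenever $\Lambda^0(\xb;u) \ni \bar\lambda \neq 0$ is nonempty, because $N_\Gamma(F(\xb);\nabla F(\xb)u) = \emptyset$ when $\nabla F(\xb)u \notin T_\Gamma(F(\xb))$ (as recalled in the Preliminaries). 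Hence the hypothesis of Theorem \ref{th:Nonexist} is literally the statement that directional quasi-normality (= directional PQ-normality w.r.t.\ $\mli^Q$) holds at $\xb$.

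Next I would invoke the monotonicity of the PQ-normality family already recorded after Definition \ref{def:DirCQ}: PQ-normality w.r.t.\ $\mli$ implies PQ-normality w.r.t.\ $\mli^{\prime}$ whenever $\mli^{\prime} \subset \mli$, and since $\mli^Q \subset \mli \subset \mli^P$ for every multi-index $\mli$ with $\vert\mli\vert = d$, directional PQ-normality w.r.t.\ any $\mli$ (in particular directional pseudo-normality, the case $\mli = \mli^P$, or directional quasi-normality, the case $\mli = \mli^Q$) implies directional quasi-normality. The one-line argument for the monotonicity itself, which I would include for completeness, is that if $\bar\lambda \in \Lambda^0(\xb;u)$ together with a sequence violates PQ-normality w.r.t.\ $\mli^{\prime}$, then the same $\bar\lambda$ and sequence violate PQ-normality w.r.t.\ $\mli$: indeed for each scalar $i$ with $\bar\lambda_i \neq 0$, letting $\nu \in I_{\mli}$ be the (unique) block containing $i$, we have $\bar\lambda_\nu \neq 0$, hence $\ip{\bar\lambda_\nu}{F_\nu(x^k)-y_\nu^k} > 0$; summing the contributions $\bar\lambda_i(F_i(x^k)-y_i^k)$ over $i$ in a block with $\bar\lambda_\nu \neq 0$ is bounded below by a positive quantity minus the (vanishing, as $y^k \to F(\xb)$ and $x^k \to \xb$) cross terms — more cleanly, one simply observes directly that $\mli^{\prime}$-feasibility of the sign conditions is stronger than $\mli$-feasibility when $\mli^{\prime}$ refines $\mli$, because each block inner product in $\mli$ is a sum of block inner products in $\mli^{\prime}$ over the sub-blocks, at least one of which is the relevant refinement; but this is already asserted in the text, so I would merely cite it.

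Finally, I would conclude: directional PQ-normality w.r.t.\ any $\mli$ at $\xb$ $\Rightarrow$ directional quasi-normality at $\xb$ $\Rightarrow$ (by the identification above, which puts us exactly in the hypothesis of Theorem \ref{th:Nonexist}) MSCQ holds at $\xb$. I do not anticipate a genuine obstacle here: the theorem is essentially a repackaging of Theorem \ref{th:Nonexist} into the PQ-normality language, and the only point requiring a moment's care is the bookkeeping that the scalar sign conditions in Theorem \ref{th:Nonexist} match \eqref{eqn : DirPQN2} for $\mli^Q$ and that the tangency requirement $\nabla F(\xb)u \in T_\Gamma(F(\xb))$ is subsumed by nonemptiness of $N_\Gamma(F(\xb);\nabla F(\xb)u)$. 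Everything else is the already-established monotonicity of the family. The proof is therefore short; I would write:

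\begin{proof}
By the monotonicity of the PQ-normality family noted after Definition \ref{def:DirCQ}, since $\mli^Q \subset \mli$ for every multi-index $\mli$ with $\vert\mli\vert = d$, directional PQ-normality w.r.t.\ $\mli$ at $\xb$ implies directional PQ-normality w.r.t.\ $\mli^Q$, i.e.\ directional quasi-normality, at $\xb$. It thus suffices to show that directional quasi-normality at $\xb$ implies the hypothesis of Theorem \ref{th:Nonexist}. Let $u \in \R^n$ with $\|u\| = 1$ and $\nabla F(\xb)u \in T_\Gamma(F(\xb))$, and suppose, towards the hypothesis of Theorem \ref{th:Nonexist}, that some nonzero $\bar\lambda \in \Lambda^0(\xb;u)$ together with a sequence $\{(x^k,y^k,\lambda^k)\} \to (\xb,F(\xb),\bar\lambda)$ satisfies, for all $k \in \N$, $\lambda^k \in \widehat N_\Gamma(y^k)$, the directional conditions \eqref{eqn : DirPN1}, and $\bar\lambda_i(F_i(x^k)-y_i^k) > 0$ whenever $\bar\lambda_i \neq 0$. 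Since $\mli^Q = (1,\ldots,1)$, the scalar indices $i \in I$ are exactly the elements of $I_{\mli^Q}$, and for such $\nu = i$ one has $\ip{\bar\lambda_\nu}{F_\nu(x^k)-y_\nu^k} = \bar\lambda_i(F_i(x^k)-y_i^k)$; hence the last displayed inequalities are precisely \eqref{eqn : DirPQN2} with $\mli = \mli^Q$. Consequently $\bar\lambda$ and the sequence $\{(x^k,y^k,\lambda^k)\}$ witness the violation of PQ-normality w.r.t.\ $\mli^Q$ in direction $u$, contradicting directional quasi-normality at $\xb$. Therefore no such $\bar\lambda$ exists for any admissible $u$, the hypothesis of Theorem \ref{th:Nonexist} holds, and MSCQ is fulfilled at $\xb$.
\end{proof}
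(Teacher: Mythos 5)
Your proof is correct and follows exactly the route the paper intends: the paper states Theorem \ref{The : MainDQNtoMS} as a mere reformulation of Theorem \ref{th:Nonexist}, relying implicitly on the monotonicity remark after Definition \ref{def:DirCQ} (namely $\mli^Q \subset \mli$, so directional PQ-normality w.r.t.\ any $\mli$ implies directional quasi-normality) and on the observation that the hypothesis of Theorem \ref{th:Nonexist} is precisely directional quasi-normality, with the tangency requirement $\nabla F(\xb)u \in T_{\Gamma}(F(\xb))$ subsumed by $\Lambda^0(\xb;u)\neq\emptyset$. You have simply made explicit the bookkeeping the paper leaves to the reader, and all of it checks out.
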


We point out that directional quasi-normality is strictly weaker than both
FOSCMS (clear from the definition of the respective CQs) as well as quasi-normality, see Example \ref{ex: DirCQ}.
Hence it constitutes, to the best of our knowledge, one of the weakest conditions to imply MSCQ for the general optimization problem \eqref{eq:GMP},
which can still be efficiently verified in some very important cases as shown in Section \ref{sec:SimpStan} and Section \ref{sec:SimpDir} below.

The same directional versions of pseudo- and quasi-normality were independently introduced in a recent paper by Bai et al. \cite{BYZ19}.
In order to show that these imply MSCQ, Bai et al. build on  results from \cite[Corollary 1, Remarks 1 and 2]{Gfr14b}.
Hence, we believe our alternative proof can provide additional insight on the role of pseudo- and quasi-normality in verifying  MSCQ.
More importantly, in what follows, we focus on simplifying these conditions  under specific structural assumptions on the feasible set,
which is a crucial step to facilitate their use.

\subsection{Simplified CQs and second-order sufficient conditions: The standard case}\label{sec:SimpStan}

For some important instances of the general program \eqref{eq:GMP}, the concepts of pseudo- and quasi-normality were introduced
without the undesirable additional sequence $\{y^k\}$, see \cite{BeO 02} for standard NLPs and \cite{KaS 10} for MPCCs.
In the remaining part of this section, we address the question as to when this is possible for more general instances of \eqref{eq:GMP},
working with the generalized notion of PQ-normality.
In turn, in the remainder of this section, $\mli$
denotes a multi-index in $\N^{l}$ for some $l \in \{1,\ldots,d\}$
with $\vert \mli \vert = d$ unless stated otherwise.
As a result of dropping the sequence $\{y^k\}$, we obtain a characterization of PQ-normality via  an extremal condition,
which, in turn, yields several sufficient conditions for PQ-normality.

For clarity of exposition, we split our analysis into the standard (non-directional) and the directional case.

We begin our study of the non-directional case by the following straightforward result, which follows readily from
definition of PQ-normality using the sequences $y^k:=F(\bar x)$
and $\lambda^k := \bar \lambda$,
taking also into account Lemma \ref{lem:Aux}
and the arguments in the proof of Corollary \ref{cor:Aux}.

\begin{lemma} \label{Pro:ModCQ}
 Let $\bar x $ be feasible for \eqref{eq:GMP}.
 If PQ-normality w.r.t. $\mli$ holds at $\bar x$ then there is no nonzero
 $\bar \lambda \in \Lambda^0(\xb)$ that satisfies the following condition:
 There exists a sequence $\{x^k\} \to \bar x$ with
\begin{equation} \label{eq : MPQNdef}
\ip{\bar \lambda_{\nu}}{F_{\nu}(x^k)-F_{\nu}(\xb)} >0 \quad \ (\nu \in I_{\mli}(\bar \lambda), \;k\in K).
\end{equation}
\end{lemma}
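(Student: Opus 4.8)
The plan is to derive the statement directly from the definition of PQ-normality w.r.t. $\mli$ (Definition \ref{def:DirCQ}(i)) by contraposition, exhibiting the specific ``trivial'' sequences $\{(x^k,y^k,\lambda^k)\}$ that turn the sequential condition in Lemma \ref{Pro:ModCQ} into the one in the definition. Concretely, suppose the conclusion of Lemma \ref{Pro:ModCQ} fails: there is a nonzero $\bar\lambda \in \Lambda^0(\xb)$ and a sequence $\{x^k\} \to \bar x$ satisfying \eqref{eq : MPQNdef}. I want to show that then PQ-normality w.r.t. $\mli$ is violated at $\bar x$, i.e.\ that the same $\bar\lambda$ serves as the forbidden multiplier in Definition \ref{def:DirCQ}(i).

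The first step is the naive choice $y^k := F(\bar x)$ and $\lambda^k := \bar\lambda$ for all $k$. With this choice, $(x^k, y^k, \lambda^k) \to (\bar x, F(\bar x), \bar\lambda)$ holds trivially, and \eqref{eqn : DirPQN2} becomes exactly $\ip{\bar\lambda_\nu}{F_\nu(x^k) - F_\nu(\xb)} > 0$ for $\nu \in I_{\mli}(\bar\lambda)$, which is precisely \eqref{eq : MPQNdef}. The one thing that fails is the membership requirement $\lambda^k \in \widehat N_\Gamma(y^k)$: we only know $\bar\lambda \in N_\Gamma(F(\bar x))$ (the limiting, not regular, normal cone), since $\bar\lambda \in \Lambda^0(\xb)$. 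This is the only genuine obstacle, and it is exactly what Lemma \ref{lem:Aux} (together with the perturbation argument from the proof of Corollary \ref{cor:Aux}) is designed to fix.

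The second step is therefore to repair the normal-cone membership by a diagonal perturbation. For each fixed $k$, apply Lemma \ref{lem:Aux} with $\Gamma$, the point $F(\bar x) \in \Gamma$, the multiplier $\bar\lambda \in N_\Gamma(F(\bar x))$, and the continuous map $a_k(y,\lambda) := \big(y, \lambda, (\ip{\bar\lambda_\nu}{F_\nu(x^k) - y_\nu})_{\nu \in I_{\mli}(\bar\lambda)}\big)$. Choosing the tolerance $\varepsilon_k := \min\{1/k,\ \tfrac12 \min_{\nu \in I_{\mli}(\bar\lambda)} \ip{\bar\lambda_\nu}{F_\nu(x^k) - F_\nu(\xb)}\} > 0$ (positive by \eqref{eq : MPQNdef}, and the minimum over the finite set $I_{\mli}(\bar\lambda)$ is fine), Lemma \ref{lem:Aux} produces $\tilde y^k \in \Gamma$ and $\tilde\lambda^k \in \widehat N_\Gamma(\tilde y^k)$ with $\|a_k(\tilde y^k, \tilde\lambda^k) - a_k(F(\bar x), \bar\lambda)\| < \varepsilon_k$. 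The first two blocks of $a_k$ force $\tilde y^k \to F(\bar x)$ and $\tilde\lambda^k \to \bar\lambda$ as $k \to \infty$ (since $\varepsilon_k \le 1/k \to 0$), while the last block, combined with the choice of $\varepsilon_k$, guarantees $\ip{\bar\lambda_\nu}{F_\nu(x^k) - \tilde y^k_\nu} > \ip{\bar\lambda_\nu}{F_\nu(x^k) - F_\nu(\xb)} - \varepsilon_k \ge \tfrac12 \ip{\bar\lambda_\nu}{F_\nu(x^k) - F_\nu(\xb)} > 0$ for every $\nu \in I_{\mli}(\bar\lambda)$ and every $k$.

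Putting it together, the sequence $\{(x^k, \tilde y^k, \tilde\lambda^k)\} \to (\bar x, F(\bar x), \bar\lambda)$ satisfies $\tilde\lambda^k \in \widehat N_\Gamma(\tilde y^k)$ and \eqref{eqn : DirPQN2}, so $\bar\lambda$ is exactly a nonzero element of $\Lambda^0(\xb)$ witnessing the violation of PQ-normality w.r.t.\ $\mli$ at $\bar x$ — contradicting the hypothesis. Hence the conclusion of Lemma \ref{Pro:ModCQ} must hold. I do not expect any real difficulty beyond bookkeeping; the only subtle point, already flagged, is the regular-vs-limiting normal cone discrepancy, handled cleanly by Lemma \ref{lem:Aux}, and one should note that the finiteness of $I_{\mli}(\bar\lambda)$ is what makes the minimum defining $\varepsilon_k$ well-behaved. (One small caveat for the write-up: the statement of Lemma \ref{Pro:ModCQ} as displayed uses an index set ``$K$'' in \eqref{eq : MPQNdef} rather than ``$k \in \N$'' — I would simply read this as the full sequence, or equivalently pass to the subsequence indexed by $K$ throughout the argument above, which changes nothing.)
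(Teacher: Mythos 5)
Your argument is correct and is exactly the route the paper takes: the paper dispenses with a formal proof by noting that the claim ``follows readily from the definition of PQ-normality using the sequences $y^k:=F(\bar x)$ and $\lambda^k:=\bar\lambda$, taking also into account Lemma \ref{lem:Aux} and the arguments in the proof of Corollary \ref{cor:Aux}'' — which is precisely your contraposition plus the $\varepsilon_k$-perturbation via Lemma \ref{lem:Aux}. Your write-up just makes the tolerance $\varepsilon_k$ and the repair of the regular-vs-limiting normal cone membership explicit, which is a faithful elaboration of the paper's one-line justification.
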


\noindent
Note that in case of MPCCs, by the geometry of the feasible set and the resulting normal cones, one always has $\ip{\bar \lambda}{F(\bx)} = 0$.
Thus the conditions used in \cite{KaS 10}
simplify to $\ip{\bar \lambda}{F(x^k)}>0$ and $\bar \lambda_i F_i(x^k) > 0$ if $\bar \lambda_i \neq 0$, respectively.
However, in the general setting of problem \eqref{eq:GMP}, as well as
in the case of general disjunctive constraints,
we cannot make this simplification.
In order to obtain the reverse implication, however, we have to impose some additional assumptions on the constraints of \eqref{eq:GMP}.

\begin{assumption} \label{Ass} Let $\mli$ be a multi-index and let $\bar x$ be feasible for \eqref{eq:GMP}.
Assume that for every $\bar \lambda \in \Lambda^0(\xb)$
 and every sequence $\{(y^k,\lambda^k) \in \Gamma\times \R^d\}\to (F(\xb), \bar \lambda)$
 with $\lambda^k\in \widehat N_{\Gamma}(y^k)$, there exists a subsequence $K \subset \N$ such that
  \begin{equation}\label{eq:SuffCond2}
\ip{\bar \lambda_{\nu}}{y_{\nu}^k - F_{\nu}(\xb)} \geq 0 \quad \ (\nu \in I_{\mli}(\bar \lambda), \;k\in K).
  \end{equation}
\end{assumption}

\begin{theorem}[Simplified PQ-normality under Ass. \ref{Ass}] \label{The:ModCQ}
 Let $\bar x$ be feasible for \eqref{eq:GMP} and $\mli$ such that Assumption \ref{Ass} holds.
 Then PQ-normality w.r.t. $\mli$ at $\xb$ is equivalent to the following {\em simplified PQ-normality} w.r.t. $\mli$ at $\xb$, i.e.
\begin{center}
There is no nonzero $\bar \lambda \in \Lambda^0(\xb)$
   such that there exists a sequence $\{x^k\} \to \bar x$
   fulfilling \eqref{eq : MPQNdef}.
      \end{center}
\end{theorem}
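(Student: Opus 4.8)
The statement is an equivalence. One direction is immediate from Lemma \ref{Pro:ModCQ}: PQ-normality w.r.t. $\mli$ implies simplified PQ-normality w.r.t. $\mli$, with no need for Assumption \ref{Ass}. So the whole content is the reverse implication, and I would prove its contrapositive: assuming PQ-normality w.r.t. $\mli$ \emph{fails} at $\xb$, I must produce a nonzero $\bar\lambda \in \Lambda^0(\xb)$ and a sequence $\{x^k\}\to\xb$ satisfying \eqref{eq : MPQNdef}, i.e. $\ip{\bar\lambda_\nu}{F_\nu(x^k)-F_\nu(\xb)} > 0$ for every $\nu \in I_{\mli}(\bar\lambda)$ and all $k$.

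\begin{proof}
The implication ``PQ-normality w.r.t. $\mli$ $\Rightarrow$ simplified PQ-normality w.r.t. $\mli$'' is exactly the content of Lemma \ref{Pro:ModCQ} and holds without Assumption \ref{Ass}.

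Conversely, assume that PQ-normality w.r.t. $\mli$ is violated at $\xb$ under Assumption \ref{Ass}. Then there exists a nonzero $\bar\lambda\in\Lambda^0(\xb)$ together with a sequence $\{(x^k,y^k,\lambda^k)\in\R^n\times\Gamma\times\R^d\}\to(\xb,F(\xb),\bar\lambda)$ such that $\lambda^k\in\widehat N_\Gamma(y^k)$ and $\ip{\bar\lambda_\nu}{F_\nu(x^k)-y_\nu^k}>0$ for all $\nu\in I_{\mli}(\bar\lambda)$ and all $k\in\N$. Since $\bar\lambda\in\Lambda^0(\xb)$ and $\{(y^k,\lambda^k)\}\to(F(\xb),\bar\lambda)$ with $\lambda^k\in\widehat N_\Gamma(y^k)$, Assumption \ref{Ass} yields a subsequence $K\subset\N$ along which $\ip{\bar\lambda_\nu}{y_\nu^k-F_\nu(\xb)}\geq 0$ for all $\nu\in I_{\mli}(\bar\lambda)$. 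Adding the two inequalities, for every $\nu\in I_{\mli}(\bar\lambda)$ and every $k\in K$ we obtain
\[
\ip{\bar\lambda_\nu}{F_\nu(x^k)-F_\nu(\xb)}
= \ip{\bar\lambda_\nu}{F_\nu(x^k)-y_\nu^k}
+ \ip{\bar\lambda_\nu}{y_\nu^k-F_\nu(\xb)}
> 0 .
\]
Thus the nonzero multiplier $\bar\lambda\in\Lambda^0(\xb)$ together with the subsequence $\{x^k\}_{k\in K}\to\xb$ fulfills \eqref{eq : MPQNdef}, which means simplified PQ-normality w.r.t. $\mli$ is violated at $\xb$. This establishes the equivalence.
\end{proof}

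**Where the difficulty lies.** There is essentially no obstacle here: the proof is a two-line ``add the inequalities'' argument once Assumption \ref{Ass} is invoked in the correct direction, and the only subtlety is bookkeeping — one must pass to the subsequence $K$ supplied by Assumption \ref{Ass} rather than hoping the inequality $\ip{\bar\lambda_\nu}{y_\nu^k - F_\nu(\xb)}\ge 0$ holds for \emph{all} $k$, since \eqref{eq : MPQNdef} itself is stated along a subsequence. (One minor care point: the forward direction via Lemma \ref{Pro:ModCQ} is stated for the full sequence in its conclusion, but inspection of that lemma shows its conclusion is already a subsequence statement, so there is nothing to reconcile.) The real work of the section has already been front-loaded into Assumption \ref{Ass} and Lemma \ref{Pro:ModCQ}; this theorem is just the clean packaging of those pieces into an ``if and only if.''
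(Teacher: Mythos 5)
Your proof is correct and follows exactly the paper's own argument: the forward direction is delegated to Lemma \ref{Pro:ModCQ}, and the reverse direction is proved by contraposition, passing to the subsequence $K$ supplied by Assumption \ref{Ass} and adding the inequality \eqref{eq:SuffCond2} to $\ip{\bar\lambda_\nu}{F_\nu(x^k)-y_\nu^k}>0$. The subsequence bookkeeping you flag is precisely what the paper handles by ``relabeling $\{x^k\}$''; there is nothing further to add.
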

\begin{proof}
 The fact that PQ-normality implies the simplified PQ-normality follows from Lemma \ref{Pro:ModCQ}.

 In turn, if PQ-normality w.r.t. $\mli$ is violated, there exist $\bar \lambda \in \Lambda^0(\xb)\setminus\{0\}$ and
 $\{(x^k,y^k,\lambda^k) \in \R^n\times \Gamma\times \R^d\}\to (\bar x,F(\bar x),\bar \lambda)$ with $\lambda^k\in \widehat N_{\Gamma}(y^k)$ and
 $\ip{\bar \lambda_{\nu}}{F_{\nu}(x^k)-y_{\nu}^k} >0$
 for all $\nu \in I_{\mli}(\bar\lambda)$. Relabeling $\{x^k\}$ by only using the indices $k\in K$ and then summing up the above expression with \eqref{eq:SuffCond2} for all $k\in K$ shows that the simplified PQ-normality is then violated as well.
\end{proof}

As the above theorem shows, under Assumption \ref{Ass}, the simplified PQ-normality is equivalent to PQ-normality, hence sufficient for MSCQ.
Without Assumption \ref{Ass} this is, in general, false, see Example \ref{ex:MSviol}.
In the following sections, however, we deal with various types of optimization problems which automatically satisfy Assumption \ref{Ass} for a suitable multi-index,
including $\mli^P$ and $\mli^Q$, at every feasible point.

As we will now show, Theorem \ref{The:ModCQ} also reveals an interesting connection between PQ-normality and vector optimization.
This, in turn, paves the way to a variety of sufficient conditions for PQ-normality, hence also for MSCQ.

Let us recall some standard terminology from multiobjective  optimization \cite{Ehr 05,Jah 11}. Given  $\varphi : \R^n \to \R^q$, a point $\xb$ is called a
{\em local weak efficient solution} of the unconstrained vector optimization problem
$\max_{x \in \R^n} \varphi(x)$ if there exists a neighborhood $U$ of $\xb$
such that no $x \in U$ satisfies $\varphi_j(x) > \varphi_j(\xb)$ for all $j = 1,\ldots,q$.
Given $\mli=(d_1, \ldots, d_l) \in \N^{l}$ and $\lambda=(\lambda_{\nu})_{\nu \in I_{\mli}} \in \R^{d_1} \times \ldots \times \R^{d_l} = \R^d$,
we define the function
\begin{equation}\label{eq:varphi}
\varphi^{\lambda}:\R^n\to \R^{|I_{\mli}(\lambda)|},\quad \varphi^{\lambda}(x):=(\ip{\lambda_{\nu}}{F_{\nu}}(x))_{\nu\in I_{\mli}(\lambda)}.
\end{equation}

\begin{theorem} \label{Pro : PQNormVSMax}
Let $\bar x$ be feasible for \eqref{eq:GMP} and let Assumption \ref{Ass}
for some $\mli$ be fulfilled.
Then PQ-normality w.r.t. $\mli$ holds at $\bx$ if and only if for every $\bar \lambda \in \Lambda^0(\xb)$, the vector $\xb$ is a local weak efficient solution of the unconstrained vector optimization problem
$ \max_{x \in \R^n} \varphi^{\bar \lambda}(x)$
 for $\varphi^{\bar \lambda}$ given by \eqref{eq:varphi}.
\end{theorem}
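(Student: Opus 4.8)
The plan is to translate the "simplified PQ-normality" characterization from Theorem~\ref{The:ModCQ} into the language of weak efficiency, observing that the two statements are essentially negations of each other once one unwinds the definitions. Concretely, recall that $\varphi^{\bar\lambda}(x) = (\ip{\bar\lambda_\nu}{F_\nu}(x))_{\nu \in I_{\mli}(\bar\lambda)}$, and note that $\varphi^{\bar\lambda}(\xb) = (\ip{\bar\lambda_\nu}{F_\nu}(\xb))_{\nu \in I_{\mli}(\bar\lambda)}$, so that for any $x$ we have the equivalence
\[
\varphi^{\bar\lambda}_\nu(x) > \varphi^{\bar\lambda}_\nu(\xb) \text{ for all } \nu \in I_{\mli}(\bar\lambda)
\quad \Longleftrightarrow \quad
\ip{\bar\lambda_\nu}{F_\nu(x) - F_\nu(\xb)} > 0 \text{ for all } \nu \in I_{\mli}(\bar\lambda).
\]
The right-hand side is exactly condition \eqref{eq : MPQNdef} evaluated at a single point $x$.

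First I would handle the direction "simplified PQ-normality $\Rightarrow$ weak efficiency". Suppose, for contradiction, that simplified PQ-normality w.r.t.\ $\mli$ holds at $\xb$ but $\xb$ fails to be a local weak efficient solution of $\max_{x\in\R^n}\varphi^{\bar\lambda}(x)$ for some $\bar\lambda \in \Lambda^0(\xb)$. If $\bar\lambda = 0$ then $I_{\mli}(\bar\lambda) = \emptyset$, $\varphi^{\bar\lambda}$ maps into $\R^0$, and $\xb$ is trivially weakly efficient (there are no components to strictly improve), so necessarily $\bar\lambda \neq 0$. Failure of local weak efficiency means that in every neighborhood of $\xb$ there is a point $x$ with $\varphi^{\bar\lambda}_\nu(x) > \varphi^{\bar\lambda}_\nu(\xb)$ for all $\nu \in I_{\mli}(\bar\lambda)$; shrinking the neighborhoods to a sequence of balls $\bB_{1/k}(\xb)$ yields $\{x^k\}\to\xb$ satisfying \eqref{eq : MPQNdef} (with $K = \N$), and since $\bar\lambda \in \Lambda^0(\xb)\setminus\{0\}$ this contradicts simplified PQ-normality.

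Conversely, for "weak efficiency $\Rightarrow$ simplified PQ-normality", suppose simplified PQ-normality fails: there is a nonzero $\bar\lambda \in \Lambda^0(\xb)$ and a sequence $\{x^k\}\to\xb$ with $\ip{\bar\lambda_\nu}{F_\nu(x^k) - F_\nu(\xb)} > 0$ for all $\nu \in I_{\mli}(\bar\lambda)$ and all $k$. By the displayed equivalence above this means $\varphi^{\bar\lambda}_\nu(x^k) > \varphi^{\bar\lambda}_\nu(\xb)$ for all $\nu \in I_{\mli}(\bar\lambda)$, and since $x^k \to \xb$, every neighborhood $U$ of $\xb$ contains some $x^k$; hence $\xb$ is not a local weak efficient solution of $\max_{x\in\R^n}\varphi^{\bar\lambda}(x)$. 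This $\bar\lambda$ witnesses the failure of the right-hand condition in the theorem. Finally, I would invoke Theorem~\ref{The:ModCQ} to replace "simplified PQ-normality w.r.t.\ $\mli$" by "PQ-normality w.r.t.\ $\mli$" throughout, which is legitimate precisely because Assumption~\ref{Ass} is in force. I do not anticipate a genuine obstacle here — the proof is essentially a bookkeeping exercise matching definitions — but the one point deserving care is the degenerate case $\bar\lambda = 0$, where $I_{\mli}(\bar\lambda)$ is empty and the vector optimization problem has an empty-dimensional objective; one must be explicit that weak efficiency holds vacuously there so that the "for every $\bar\lambda \in \Lambda^0(\xb)$" quantifier in the statement is consistent with the "no nonzero $\bar\lambda$" phrasing of PQ-normality.
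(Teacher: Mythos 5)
Your proof is correct and follows essentially the same route as the paper: both reduce the statement to the simplified PQ-normality of Theorem \ref{The:ModCQ} (valid under Assumption \ref{Ass}) and then observe that its negation is precisely the negation of local weak efficiency of $\xb$ for $\varphi^{\bar\lambda}$. Your explicit treatment of the degenerate case $\bar\lambda=0$ is a welcome clarification of the paper's terse remark that failure of weak efficiency forces $\bar\lambda\neq 0$.
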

\begin{proof}
If there exists $\bar \lambda\in \Lambda^0(\bar x)$ such that $\bar x$ is not a local weak efficient solution of $\max_{x \in \R^n} \varphi^{\bar \lambda}(x)$, then $\bar \lambda\neq 0$ and there exists $\{x^k\} \to \bar x$ such that
$\ip{\bar \lambda_{\nu}}{F_{\nu}(x^k)} > \ip{\bar \lambda_{\nu}}{F_{\nu}(\xb)}$ for all $\nu \in I_{\mli}(\bar \lambda)$ and all $k\in \N$. This shows that PQ-normality w.r.t. $\mli$ is violated due to Theorem \ref{The:ModCQ}.

In turn, if PQ-normality w.r.t. $\mli$ is violated, there exists $\bar \lambda \in \Lambda^0(\bar x)\setminus \{0\}$ and a sequence $\{x^k\} \to \bar x$ such that
$\ip{\bar \lambda_{\nu}}{F_{\nu}(x^k) - F_{\nu}(\xb)}>0$ for all $\nu \in I_{\mli}(\bar \lambda)$ and all $k\in \N$, which shows that $\bar x$ is not a local weak efficient solution of $\max_{x \in \R^n} \varphi^{\bar \lambda}(x)$.
\end{proof}

This simple observation has some significant consequences.
In particular, it allows us to use the standard sufficient conditions
for a local weak efficient solution to obtain the following point-based sufficient condition for PQ-normality.
\begin{corollary}[Sufficient condition for PQ-normality] \label{Cor : PQNormSC}
Let $\bar x$ be feasible for \eqref{eq:GMP} with $F$ twice differentiable at $\xb$
and let Assumption \ref{Ass} for some $\mli$ be fulfilled.
Then PQ-normality w.r.t. $\mli$, in particular MSCQ, holds at $\bx$
under the following condition:
For every $\bar\lambda \in \Lambda^0(\xb)\setminus\{0\}$, every $u \in \R^n\setminus\{0\}$ with $\ip{\bar\lambda_{\nu}}{\nabla F_{\nu}(\xb) u} = 0$ for all $\nu \in I_{\mli}(\bar\lambda)$ and every $w$ with $\ip{w}{u} = 0$ one has
\begin{equation} \label{eq : SSOSCMPQN}
\min_{\nu \in I_{\mli}(\bar\lambda)} \left( \ip{\bar\lambda_{\nu}}{\nabla F_{\nu}(\xb) w} + u^T \nabla^2 \ip{\bar\lambda_{\nu}}{F_{\nu}}(\xb) u \right) < 0.
\end{equation}
\end{corollary}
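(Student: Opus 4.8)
The plan is to invoke Theorem~\ref{Pro : PQNormVSMax}, which reduces the claim to showing that the second-order condition \eqref{eq : SSOSCMPQN} is sufficient for $\xb$ to be a local weak efficient solution of the unconstrained vector problem $\max_{x\in\R^n}\varphi^{\bar\lambda}(x)$ for each fixed $\bar\lambda\in\Lambda^0(\xb)\setminus\{0\}$. (The case $\bar\lambda=0$ is excluded from $\Lambda^0(\xb)$-usage here but in any event is vacuous for the efficiency statement, or rather only nonzero multipliers matter; note every $\bar\lambda\in\Lambda^0(\xb)$ with $\bar\lambda=0$ trivially satisfies the efficiency conclusion.) So the core is a standard second-order sufficiency result for local weak efficiency, phrased in our notation with $\varphi=\varphi^{\bar\lambda}$ and $q=|I_{\mli}(\bar\lambda)|$.

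First I would argue by contradiction: suppose $\xb$ is not a local weak efficient solution of $\max \varphi^{\bar\lambda}$. Then there is a sequence $\{x^k\}\to\xb$, $x^k\neq\xb$, with $\varphi^{\bar\lambda}_\nu(x^k)>\varphi^{\bar\lambda}_\nu(\xb)$ for all $\nu\in I_{\mli}(\bar\lambda)$, i.e. $\ip{\bar\lambda_\nu}{F_\nu(x^k)-F_\nu(\xb)}>0$ for all such $\nu$ and all $k$. Write $x^k=\xb+t_k u^k$ with $t_k:=\|x^k-\xb\|\downarrow 0$ and $\|u^k\|=1$; passing to a subsequence, $u^k\to u$ with $\|u\|=1$. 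A first-order Taylor expansion gives, for each $\nu$,
\[
0<\ip{\bar\lambda_\nu}{F_\nu(x^k)-F_\nu(\xb)}=t_k\,\ip{\bar\lambda_\nu}{\nabla F_\nu(\xb)u^k}+o(t_k),
\]
so dividing by $t_k$ and letting $k\to\infty$ yields $\ip{\bar\lambda_\nu}{\nabla F_\nu(\xb)u}\geq 0$ for all $\nu\in I_{\mli}(\bar\lambda)$. To upgrade this to equality for all $\nu$, I use that $\bar\lambda\in\Lambda^0(\xb)$ means $\nabla F(\xb)^T\bar\lambda=0$, hence $\sum_{\nu\in I_{\mli}(\bar\lambda)}\ip{\bar\lambda_\nu}{\nabla F_\nu(\xb)u}=\ip{\bar\lambda}{\nabla F(\xb)u}=0$; a sum of nonnegative terms being zero forces each $\ip{\bar\lambda_\nu}{\nabla F_\nu(\xb)u}=0$. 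Thus $u$ is an admissible critical direction for the hypothesis \eqref{eq : SSOSCMPQN}.

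Next, decompose $u^k=\alpha_k u + w^k$ with $w^k\perp u$ — more precisely write $u^k = (\ip{u^k}{u})\,u + w^k$ where $\ip{w^k}{u}=0$, so $\alpha_k=\ip{u^k}{u}\to 1$ and $w^k\to 0$ (since $\|u^k\|^2=\alpha_k^2+\|w^k\|^2=1$). A second-order Taylor expansion of $\ip{\bar\lambda_\nu}{F_\nu}$ at $\xb$ along $x^k-\xb=t_k u^k$, together with $\ip{\bar\lambda_\nu}{\nabla F_\nu(\xb)u}=0$, gives
\[
0<\ip{\bar\lambda_\nu}{F_\nu(x^k)-F_\nu(\xb)}
= t_k\ip{\bar\lambda_\nu}{\nabla F_\nu(\xb)w^k}+\tfrac{t_k^2}{2}\,\alpha_k^2\,u^T\nabla^2\ip{\bar\lambda_\nu}{F_\nu}(\xb)u + o(t_k^2+t_k\|w^k\|).
\]
The delicate point — and the main obstacle — is controlling the relative size of $t_k$ and $\|w^k\|$: the linear-in-$w^k$ term is formally of larger order than the quadratic term, so one must show that after normalization the surviving inequality still captures the sign information. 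The standard device is to set $r_k := \|w^k\|/t_k$ (or rather examine $w^k/t_k$), distinguish the cases where $w^k/t_k$ is bounded versus unbounded along subsequences, and in each regime divide the displayed inequality by the dominant scale ($t_k^2$ if $\|w^k\|=O(t_k)$, giving a term $\ip{\bar\lambda_\nu}{\nabla F_\nu(\xb)\,\tilde w}$ with $\tilde w=\lim w^k/t_k\perp u$ plus the Hessian term; or by $t_k\|w^k\|$ if $\|w^k\|/t_k\to\infty$, in which case only $\ip{\bar\lambda_\nu}{\nabla F_\nu(\xb)\hat w}\geq 0$ survives with $\hat w=\lim w^k/\|w^k\|\perp u$, and then again summing against $\bar\lambda$ via $\nabla F(\xb)^T\bar\lambda=0$ forces equality, allowing one to pass to the next order of expansion, i.e. re-run the argument with a genuinely second-order $w$). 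In every case one produces a direction $w$ with $\ip{w}{u}=0$ such that $\ip{\bar\lambda_\nu}{\nabla F_\nu(\xb)w}+u^T\nabla^2\ip{\bar\lambda_\nu}{F_\nu}(\xb)u\geq 0$ for all $\nu\in I_{\mli}(\bar\lambda)$, hence $\min_{\nu\in I_{\mli}(\bar\lambda)}(\cdots)\geq 0$, contradicting \eqref{eq : SSOSCMPQN}. (Should the authors prefer, this is precisely a known second-order sufficient optimality condition for weak efficiency from \cite{Jah 11} or \cite{Ehr 05} applied to $\varphi^{\bar\lambda}$, and one may simply cite it; I would present the self-contained contradiction argument to keep the paper modular.) Finally, since this holds for every $\bar\lambda\in\Lambda^0(\xb)\setminus\{0\}$ — and trivially for $\bar\lambda=0$ — Theorem~\ref{Pro : PQNormVSMax} yields PQ-normality w.r.t.\ $\mli$ at $\xb$, and Theorem~\ref{The : MainDQNtoMS} (or Proposition~\ref{The : CQsToMS}(i)) gives MSCQ.
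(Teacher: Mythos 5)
Your overall strategy is exactly the paper's: reduce via Theorem~\ref{Pro : PQNormVSMax} to showing that \eqref{eq : SSOSCMPQN} forces $\xb$ to be a local weak efficient solution of $\max\varphi^{\bar\lambda}$, and use the identity $\sum_{\nu\in I_{\mli}(\bar\lambda)}\ip{\bar\lambda_{\nu}}{\nabla F_{\nu}(\xb)z}=0$ (the paper's \eqref{eq : LinL0}) to upgrade admissible directions to critical ones. At that point the paper simply cites the second-order sufficient condition for weak efficiency from \cite[Theorem 4]{Bi06}; since that is your fallback option, the citation route coincides with the published proof and is fine.

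The self-contained argument you prefer to present, however, has a genuine gap precisely at the point you flag, namely the case $\norm{w^k}/t_k\to\infty$. Knowing that $\hat w=\lim w^k/\norm{w^k}$ satisfies $\ip{\bar\lambda_{\nu}}{\nabla F_{\nu}(\xb)\hat w}=0$ for all $\nu$ gives no control on the size of the linear term $t_k\ip{\bar\lambda_{\nu}}{\nabla F_{\nu}(\xb)w^k}$ relative to $t_k^2$: writing $w^k=\norm{w^k}\hat w+\norm{w^k}\eps^k$ with $\eps^k\to 0$, that term equals $t_k\norm{w^k}\ip{\bar\lambda_{\nu}}{\nabla F_{\nu}(\xb)\eps^k}$, which is $o(t_k\norm{w^k})$ but may still dominate $t_k^2$. ``Passing to the next order of expansion'' is not available ($F$ is only assumed twice differentiable), and the peeling-off iteration you hint at would need an explicit termination argument. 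The clean repair --- the one the paper uses for the directional analogue in the proof of Proposition~\ref{Cor: SOSCdirPQN} --- is to normalize in the \emph{image} space rather than the domain: set $v^k:=\nabla\varphi^{\bar\lambda}(\xb)w^k$ (or work directly with $\varphi^{\bar\lambda}(x^k)-\varphi^{\bar\lambda}(\xb)$). If $\norm{v^k}/t_k\to\infty$, then $p:=\lim v^k/\norm{v^k}$ is a unit vector with $p\geq 0$ lying in the closed subspace $\Range\nabla\varphi^{\bar\lambda}(\xb)$, all of whose elements have zero component sum by \eqref{eq : LinL0}; a nonnegative vector with zero component sum is zero, contradicting $\norm{p}=1$. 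If instead $v^k/t_k$ stays bounded, its limit lies in $\Range\nabla\varphi^{\bar\lambda}(\xb)$ by closedness, hence equals $\nabla\varphi^{\bar\lambda}(\xb)w$ for some $w$ that may be taken orthogonal to $u$, and after rescaling $w$ by $2$ to absorb the Taylor factor $\tfrac12$ this contradicts \eqref{eq : SSOSCMPQN}. With that modification your case analysis closes; as written, the unbounded case does not.
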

\begin{proof} Consider $\bar\lambda \in \Lambda^0(\xb)\setminus\{0\}$
and $\varphi^{\bar\lambda}$ given by \eqref{eq:varphi} and let $z \in \R^n$ be arbitrary. Then
 \begin{equation} \label{eq : LinL0}
  \sum_{\nu \in I_{\mli}(\bar\lambda)} \nabla \varphi^{\bar\lambda}_\nu(\xb) z =
 \sum_{\nu \in I_{\mli}(\bar\lambda)} \ip{\bar\lambda_{\nu}}{\nabla F_{\nu}(\xb) z} = \ip{\bar\lambda}{\nabla F(\xb) z} = 0,
 \end{equation}
 since $\bar\lambda \in \Lambda^0(\xb)$. Hence, every $u$ with $\nabla \varphi^{\bar\lambda}_\nu(\xb) u \geq 0$
 for all $\nu \in I_{\mli}(\bar\lambda)$ in fact fulfills $\nabla \varphi^{\bar\lambda}_\nu(\xb) u = \ip{\bar\lambda_{\nu}}{\nabla F_{\nu}(\xb) u} = 0$ for all $\nu \in I_{\mli}(\bar\lambda)$.
 The result thus follows from \cite[Theorem 4]{Bi06} and Theorem \ref{Pro : PQNormVSMax}.
\end{proof}

\begin{remark}
Note that $\bar\lambda \in \Lambda^0(\xb)$
 implies the first-order necessary conditions for local efficient solution,
 $\min_{\nu \in I_{\mli}(\bar\lambda)}
 \ip{\bar\lambda_{\nu}}{\nabla F_{\nu}(\xb) w} \leq 0$ for all $w \in \R^n$,
 as can be seen from \eqref{eq : LinL0}.
\end{remark}

The above corollary motivates the following definition.
\begin{definition}
Given a feasible point $\bar x$ for \eqref{eq:GMP} and a
multi-index $\mli$,
we say that the second-order sufficient condition for PQ-normality
w.r.t. $\mli$, SOSCPQN($\mli$) for short,
holds at $\xb$ provided:
For every $\bar\lambda \in \Lambda^0(\xb)\setminus\{0\}$, every $u \in \R^n\setminus\{0\}$ with $\ip{\bar\lambda_{\nu}}{\nabla F_{\nu}(\xb) u} = 0$ for all $\nu \in I_{\mli}(\bar\lambda)$ and every $w$ with $\ip{w}{u} = 0$ one has \eqref{eq : SSOSCMPQN}.

Moreover, we refer to SOSCPQN($\mli^P$) and SOSCPQN($\mli^Q$)
as second-order sufficient condition for pseudo-/quasi-normality
(SOSCPN and SOSCQN), respectively.
\end{definition}

Naturally, one can also consider higher-order sufficient conditions. We do so in Section 4,
where we focus on pseudo-normality.
Note that pseudo-normality is connected
to standard maximality since $\varphi^{\lambda}$ is a scalar function in that case.

The following example shows that SOSCPN on its own, i.e., without Assumption \ref{Ass} for $\mli^P$, does not guarantee pseudo-normality, not even MSCQ.

\begin{example} \label{ex:MSviol}
Consider $\Gamma \subset \R^2$ given by $\Gamma:=\{y \in \R^2 \mv y_2 \geq \vert y_1 \vert^{3/2}\}$
and $F: \R \to \R^2$ defined by $F(x) := (x,x^2)^T$ and let $\xb := 0$.
Clearly $\nabla F(\xb)=(1,0)^T$ and $\Lambda^0(\xb) = \R_+ (0,-1)^T.$
Thus, for every
$\lambda \in \Lambda^0(\xb) \setminus \{0\}$
and every $u \in \R\setminus \{0\}$ we have
$u^T \nabla^2 \ip{\lambda}{F}(\xb) u = -2 \alpha u^2 < 0$,
where $\alpha>0$ is such that $\lambda = (0,-\alpha)$,
showing that SOSCPN holds at $\xb$.
On the other hand, for a sequence $\{x_k\} \to 0$ we obtain $\dist_{F^{-1}(\Gamma)}(x_k) = \vert x_k \vert$,
while
\[\dist_\Gamma(F(x_k)) \leq \norm{(x_k,x_k^2) - (x_k,\vert x_k \vert^{3/2})} \leq \vert x_k \vert^{3/2},\]
showing the violation of MSCQ and consequently of pseudo-normality as well.
\end{example}

\noindent
We point out that the set $\Gamma$ in Example \ref{ex:MSviol} equals $\epi |\cdot|^{3/2}$ and is therefore convex, yet SOSCPN still does not imply MSCQ.
\begin{theorem} \label{Pro:SOSCMPNtoSOSCMPQN}
 Let $\bar x$ be feasible for \eqref{eq:GMP} with $F$ twice differentiable at $\xb$ and consider two multi-indices $\mli \in \N^l, \mli^{\prime} \in \N^{l^{\prime}}$
 with $\mli^{\prime} \subset \mli$.
 Then SOSCPQN($\mli$) implies SOSCPQN($\mli^{\prime}$).
 In particular, we have
 SOSCPN $\Rightarrow$ SOSCPQN($\mli$) $\Rightarrow$ SOSCQN.
\end{theorem}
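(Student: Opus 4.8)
The plan is to reduce SOSCPQN($\mli'$) directly to SOSCPQN($\mli$) by exploiting the additivity, over the block decomposition induced by the refinement $\mli'\subset\mli$, of the two expressions that enter the definition. Fix $\bar\lambda\in\Lambda^0(\xb)\setminus\{0\}$. Since $\mli'\subset\mli$, for each $\nu\in I_{\mli}$ we have $\bar\lambda_\nu=(\bar\lambda_{\nu'})_{\nu'\in I^\nu_{\mli'}}$ and $F_\nu=(F_{\nu'})_{\nu'\in I^\nu_{\mli'}}$, so that
\[
\ip{\bar\lambda_\nu}{\nabla F_\nu(\xb)z}=\sum_{\nu'\in I^\nu_{\mli'}}\ip{\bar\lambda_{\nu'}}{\nabla F_{\nu'}(\xb)z}\quad(z\in\R^n),
\]
and likewise $u^T\nabla^2\ip{\bar\lambda_\nu}{F_\nu}(\xb)u=\sum_{\nu'\in I^\nu_{\mli'}}u^T\nabla^2\ip{\bar\lambda_{\nu'}}{F_{\nu'}}(\xb)u$, using here that $F$ is twice differentiable at $\xb$. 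I would also record the elementary bookkeeping fact that $\nu\in I_{\mli}(\bar\lambda)$ iff $\bar\lambda_{\nu'}\neq 0$ for some $\nu'\in I^\nu_{\mli'}$; consequently $I_{\mli'}(\bar\lambda)=\bigcup_{\nu\in I_{\mli}(\bar\lambda)}\{\nu'\in I^\nu_{\mli'}\mv\bar\lambda_{\nu'}\neq 0\}$, and $\bar\lambda_{\nu'}=0$ whenever $\nu'\in I^\nu_{\mli'}$ with $\nu\notin I_{\mli}(\bar\lambda)$.

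First I would verify that every admissible direction for SOSCPQN($\mli'$) is admissible for SOSCPQN($\mli$). Let $u\in\R^n\setminus\{0\}$ satisfy $\ip{\bar\lambda_{\nu'}}{\nabla F_{\nu'}(\xb)u}=0$ for all $\nu'\in I_{\mli'}(\bar\lambda)$. For any $\nu\in I_{\mli}(\bar\lambda)$, apply the first displayed identity with $z=u$ and split the sum over $I^\nu_{\mli'}$: a summand with $\nu'\in I_{\mli'}(\bar\lambda)$ vanishes by hypothesis, and a summand with $\nu'\notin I_{\mli'}(\bar\lambda)$ has $\bar\lambda_{\nu'}=0$ and hence vanishes trivially. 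Thus $\ip{\bar\lambda_\nu}{\nabla F_\nu(\xb)u}=0$ for all $\nu\in I_{\mli}(\bar\lambda)$, i.e., $u$ is admissible for SOSCPQN($\mli$).

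Next, fix such a $u$ together with an arbitrary $w$ satisfying $\ip{w}{u}=0$. By SOSCPQN($\mli$) there exists $\nu_0\in I_{\mli}(\bar\lambda)$ with $\ip{\bar\lambda_{\nu_0}}{\nabla F_{\nu_0}(\xb)w}+u^T\nabla^2\ip{\bar\lambda_{\nu_0}}{F_{\nu_0}}(\xb)u<0$. Expanding this quantity via the two additive identities as a sum over $\nu'\in I^{\nu_0}_{\mli'}$, at least one summand, say for $\nu_0'\in I^{\nu_0}_{\mli'}$, is negative; since a summand with $\bar\lambda_{\nu_0'}=0$ equals $0$, we must have $\bar\lambda_{\nu_0'}\neq 0$, i.e., $\nu_0'\in I_{\mli'}(\bar\lambda)$. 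Therefore the minimum over $\nu'\in I_{\mli'}(\bar\lambda)$ appearing in the defining inequality of SOSCPQN($\mli'$) (namely \eqref{eq : SSOSCMPQN} with $\mli$ replaced by $\mli'$) is bounded above by this negative value, which establishes SOSCPQN($\mli'$) at $\xb$. The "in particular" assertion then follows immediately, since $\mli^Q\subset\mli\subset\mli^P$ for every admissible multi-index and SOSCPQN($\mli^P$), SOSCPQN($\mli^Q$) are by definition SOSCPN, SOSCQN.

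I do not anticipate a genuine obstacle; the only delicate point is the index bookkeeping around $I^\nu_{\mli'}$, and in particular the two places where one must use that a vanishing multiplier sub-block forces the corresponding term to vanish — once to transfer admissibility of $u$ from $\mli'$ up to $\mli$, and once to guarantee that the negative summand produced by SOSCPQN($\mli$) is indexed by some $\nu_0'\in I_{\mli'}(\bar\lambda)$ rather than by a zero block.
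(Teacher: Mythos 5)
Your proposal is correct and follows essentially the same route as the paper's proof: transfer admissibility of $u$ from $\mli'$ to $\mli$ by summing the vanishing inner products over each block $I^\nu_{\mli'}$, apply SOSCPQN($\mli$) to get a negative block sum, and extract a negative summand indexed by a nonzero sub-block. You are in fact slightly more careful than the paper about the two bookkeeping points (summands with $\bar\lambda_{\nu'}=0$ vanish, so admissibility transfers and the negative summand lies in $I_{\mli'}(\bar\lambda)$), which the paper leaves implicit.
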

\begin{proof}
 Consider $0 \neq \bar\lambda \in \Lambda^0(\xb)$, $0 \neq u \in \R^n$ and $w \in \R^n$ with
 $\ip{\bar\lambda_{\nu^{\prime}}}{\nabla F_{\nu^{\prime}}(\xb) u} = 0$
 for all $\nu^{\prime} \in I_{\mli^{\prime}}(\bar\lambda)$ and $\ip{w}{u} = 0$.
 For any $\nu \in I_{\mli}(\bar\lambda)$, we find some
 index set $I^\nu_{\mli^{\prime}} \subset I_{\mli^{\prime}}$
 such that $z_{\nu} = (z_{\nu^{\prime}})_{\nu^{\prime} \in I^\nu_{\mli^{\prime}}}$
 by $\mli^{\prime} \subset \mli$.
 Summing up $\ip{\bar\lambda_{\nu^{\prime}}}{\nabla F_{\nu^{\prime}}(\xb) u} = 0$
 over $I^\nu_{\mli^{\prime}}$ yields
 $\ip{\bar\lambda_{\nu}}{\nabla F_{\nu}(\xb) u} = 0$.
 Thus, we can apply SOSCPQN($\mli$) in order to infer the existence of
 $\bar\nu \in I_{\mli}(\bar\lambda)$ such that
 \if{
 \[\ip{\bar\lambda_{\bar\nu}}{\nabla F_{\bar\nu}(\xb) w} + u^T \nabla^2 \ip{\bar\lambda_{\bar\nu}}{F_{\bar\nu}}(\xb) u < 0.\]
 Again, $\mli^{\prime} \subset \mli$,
 yields the existence of index set $I^{\bar\nu}_{\mli^{\prime}} \subset I_{\mli^{\prime}}$
 such that $z_{\bar\nu} = (z_{\bar\nu^{\prime}})_{\bar\nu^{\prime} \in I^{\bar\nu}_{\mli^{\prime}}}$ and we obtain
 }\fi
 \begin{equation*}
  \sum_{\bar\nu^{\prime} \in I^{\bar\nu}_{\mli^{\prime}}} \left( \ip{\bar\lambda_{\bar\nu^{\prime}}}{\nabla F_{\bar\nu^{\prime}}(\xb) w} + u^T \nabla^2 \ip{\bar\lambda_{\bar\nu^{\prime}}}{F_{\bar\nu^{\prime}}}(\xb) u \right) =
  \ip{\bar\lambda_{\bar\nu}}{\nabla F_{\bar\nu}(\xb) w} + u^T \nabla^2 \ip{\bar\lambda_{\bar\nu}}{F_{\bar\nu}}(\xb) u < 0.
 \end{equation*}
 This yields, however, that SOSCPQN($\mli^{\prime}$) is fulfilled.

 The second statement now follows from the obvious relation
 $\mli^{Q} \subset \mli \subset \mli^{P}$ valid for any $\mli$.
\end{proof}
\noindent
The above theorem holds regardless of Assumption \ref{Ass}.
If one seeks to use any of the sufficient conditions
to get metric subregularity, however,
one clearly needs it, see Examples
\ref{ex:MSviol} and \ref{ex:MSviolUnderMQN}.
Note also that if Assumption \ref{Ass} holds for $\mli^{\prime}$,
then it also holds for any $\mli \supset \mli^{\prime}$.

The following example shows that SOSCQN is, in fact, strictly milder than SOSCPN.
Moreover, it demonstrates that one can effectively verify MSCQ by means of SOSCQN
even when pseudo-normality is not fulfilled.

\begin{example} \label{ex:PQNyPNn}
Let $\Gamma := \Gamma_1 \times \Gamma_2 \subset \R^2$
for two convex polyhedral sets $\Gamma_1 = \Gamma_2:= \R_-$
and let $F:=(F_1,F_2)^T: \R \to \R^2$ for
$F_1(x) := -x$ and $F_2(x) := x + x^2$ and let $\xb := 0$.
In particular, Assumption \ref{Ass} for $\mli^Q$ is fulfilled by Corollary \ref{Pro : QusinormEquiv}.
Clearly, $\nabla F_1(\xb)=-1$, $\nabla F_2(\xb)=1$ and hence
$\Lambda^0(\bar x) = \R_+ (1,1)^T.$

SOSCQN is fulfilled since for any $\lambda = (\lambda_1,\lambda_2) = \alpha(1,1)^T$ for some $\alpha > 0$ and for $u = \pm 1$
one has $\vert \lambda_i \nabla F_i(\xb) u \vert = \alpha \neq 0$, $i=1,2$.
In particular, quasi-normality and MSCQ follows.

On the other hand, let $\bar \lambda:= (1,1)^T$ and consider a sequence $\{x_k\} \downarrow 0$. We obtain
\[\ip{\bar \lambda}{F(x_k) - F(\bx)}=-x_k + x_k + x_k^2 >0,\]
showing the violation of pseudo-normality.
\end{example}

The next example shows that, without Assumption \ref{Ass} for $\mli^Q$,
the simplified form of quasi-normality from Lemma \ref{Pro:ModCQ}
does not imply MSCQ even if $\Gamma$ is a convex polyhedral set.

\begin{example} \label{ex:MSviolUnderMQN}
Let $\Gamma \subset \R^2$ be convex polyhedral set given by
$\Gamma:=\{y \in \R^2 \mv y_2 \geq y_1 \}$
and $F: \R \to \R^2$ given by $F(x) := (x,\sin x)^T$ and let $\xb := 0$.
Clearly $\nabla F(\xb)=(1,1)^T$ and we find that $\Lambda^0(\bar x) = \R_+ (1,-1)^T.$
For every $\lambda = (\lambda_1,\lambda_2) = \alpha(1,-1)^T$ for some $\alpha > 0$
and every $x \in \R$ close to $\xb$ we have
$\lambda_1 (F_1(x)-F_1(\xb)) = \alpha x < 0$ if $x<0$ and
$\lambda_2 (F_2(x)-F_1(\xb)) = - \alpha \sin x \leq 0$ if $x\geq0$, showing that
the simplified form of quasi-normality holds at $\xb$.
On the other hand, for a sequence $\{x_k\} \downarrow 0$ we obtain $\dist_{F^{-1}(\Gamma)}(x_k) = \vert x_k \vert$,
while
\[\dist_\Gamma(F(x_k)) \leq \norm{(x_k,\sin x_k) - (x_k, x_k )} = o(\vert x_k \vert),\]
showing the violation of MSCQ.
\end{example}

\subsection{Simplified CQs and second-order sufficient conditions: The directional case}\label{sec:SimpDir}

In this subsection, we consider the directional case, where the situation is slightly different.

\begin{theorem} \label{The:DirModCQ}
 Let $\bar x$ be feasible for \eqref{eq:GMP} and consider $u \in \R^n$ with $\|u\|=1$. Then
under Assumption \ref{Ass} for $\mli$, PQ-normality w.r.t. $\mli$ at $\xb$ in direction $u$
holds if:
   there is no nonzero $\bar \lambda \in \Lambda^0(\xb;u)$
   such that there exists a sequence $\{x^k\} \to \bar x$
   with $(x^k - \xb)/\norm{x^k - \xb} \to u$ fulfilling \eqref{eq : MPQNdef}.
\end{theorem}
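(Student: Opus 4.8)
The plan is to mimic the contrapositive argument used in the proof of Theorem~\ref{The:ModCQ}, but carry the directional information along throughout. First I would assume that PQ-normality w.r.t. $\mli$ in direction $u$ fails at $\xb$. By Definition~\ref{def:DirCQ}(ii) this produces a nonzero $\bar\lambda\in\Lambda^0(\xb;u)$ together with a sequence $\{(x^k,y^k,\lambda^k)\in\R^n\times\Gamma\times\R^d\}\to(\xb,F(\xb),\bar\lambda)$ satisfying $\lambda^k\in\widehat N_\Gamma(y^k)$, the normalization conditions \eqref{eqn : DirPN1} (in particular $(x^k-\xb)/\norm{x^k-\xb}\to u$), and the strict inequalities $\ip{\bar\lambda_\nu}{F_\nu(x^k)-y^k_\nu}>0$ for $\nu\in I_{\mli}(\bar\lambda)$.

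Next I would invoke Assumption~\ref{Ass} for $\mli$: since $\bar\lambda\in\Lambda^0(\xb;u)\subset\Lambda^0(\xb)$ and $\{(y^k,\lambda^k)\}\to(F(\xb),\bar\lambda)$ with $\lambda^k\in\widehat N_\Gamma(y^k)$, there is a subsequence $K\subset\N$ along which $\ip{\bar\lambda_\nu}{y^k_\nu-F_\nu(\xb)}\ge 0$ for all $\nu\in I_{\mli}(\bar\lambda)$. Relabelling so that we only keep indices $k\in K$ (which does not disturb any of the limits, in particular not $(x^k-\xb)/\norm{x^k-\xb}\to u$), I would add this inequality to $\ip{\bar\lambda_\nu}{F_\nu(x^k)-y^k_\nu}>0$ to obtain $\ip{\bar\lambda_\nu}{F_\nu(x^k)-F_\nu(\xb)}>0$ for every $\nu\in I_{\mli}(\bar\lambda)$ and every $k$ in the relabelled sequence. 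Together with $\bar\lambda\in\Lambda^0(\xb;u)\setminus\{0\}$ and $(x^k-\xb)/\norm{x^k-\xb}\to u$, this is precisely the configuration that the hypothesis of the theorem excludes — a contradiction. Hence PQ-normality w.r.t. $\mli$ in direction $u$ must hold, invoking also Lemma~\ref{lem:Aux}/Corollary~\ref{cor:Aux} as in Theorem~\ref{The:ModCQ} to reconcile $\widehat N_\Gamma$ versus $N_\Gamma$ if needed (though here the sequence already comes with $\lambda^k\in\widehat N_\Gamma(y^k)$, so this is not really an issue).

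The only genuinely delicate point, and the reason the directional statement is stated as an implication rather than an equivalence, is that one cannot run the converse direction: given a ``bad'' sequence $\{x^k\}$ realizing \eqref{eq : MPQNdef} with $(x^k-\xb)/\norm{x^k-\xb}\to u$, the natural candidate choices $y^k:=F(\xb)$, $\lambda^k:=\bar\lambda$ used in Lemma~\ref{Pro:ModCQ} would force $(y^k-F(\xb))/\norm{x^k-\xb}=0$, which equals $\nabla F(\xb)u$ only in the degenerate case $\nabla F(\xb)u=0$; so the second normalization in \eqref{eqn : DirPN1} generally cannot be met by that construction, and directional simplified PQ-normality need not imply directional PQ-normality. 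I expect this asymmetry — keeping track of which limits survive the passage to a subsequence and recognizing that the reverse implication genuinely breaks — to be the main conceptual obstacle, but for the stated (one-directional) claim the argument above is essentially a transcription of the proof of Theorem~\ref{The:ModCQ} with the extra normalization carried through.
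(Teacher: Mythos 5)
Your proposal is correct and follows essentially the same route as the paper: the paper proves Theorem \ref{The:DirModCQ} by simply repeating the argument of Theorem \ref{The:ModCQ} (pass to the subsequence $K$ supplied by Assumption \ref{Ass}, add \eqref{eq:SuffCond2} to the strict inequalities, and note that relabelling does not disturb the directional limits). Your closing remark on why the converse implication fails is also exactly the point the paper makes via Example \ref{Ex:DirVerDiff}.
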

\begin{proof}
 The proof follows by the same arguments as used in the proof of Theorem
 \ref{The:ModCQ}.
\end{proof}

In contrast to the standard case, the following example shows
that the reverse implication in the above theorem is not true in general.
\begin{example} \label{Ex:DirVerDiff}
 Consider $\Gamma \subset \R^2$ given by $\Gamma:=\{y \in \R^2 \mv y_2 \leq y_1^{2}\}$
and $F: \R \to \R^2$ defined by $F(x) := (x,x^4)^T$ and let $\xb := 0$ and $u:= 1$.
Clearly $\nabla F(\xb)=(1,0)^T$ and $\Lambda^0(\xb;1)=\Lambda^0(\xb) = \R_+ (0,1)^T.$
Set $\bar \lambda := (0,1)^T$ and note that any sequence $\{x_k\} \downarrow 0$ fulfills
$(x_k - \xb)/\norm{x_k - \xb} \to u$ as well as \eqref{eq : MPQNdef}
for $\mli^P$, since $\ip{\bar \lambda}{F(x_k) - F(\xb)} = x_k^4 > 0$.

On the other hand, for arbitrary sequence $y^k=(y^k_1,y^k_2)^T \to F(\xb)=(0,0)^T$ with
$N_{\Gamma}(y^k) \neq \{0\}$ we have $y^k=(y_1^k,(y_1^k)^2)^T$.
Hence, for any $\lambda \in \R_+ (0,1)^T$ one has
$\ip{\lambda}{y^k - F(\bx)} = \lambda_2 (y_1^k)^2 \geq 0$,
showing that Assumption \ref{Ass} for $\mli^P$ is fulfilled.
Moreover $(y^k_1/x_k,(y^k_1)^2/x_k)^T = (y^k - F(\xb))/\norm{x_k - \xb} \to \nabla F(\xb) u = (1,0)^T$
yields $y^k_1/x_k \to 1$.
Then, however, we obtain
\[\ip{\lambda}{F(x_k) - y^k}= \lambda_2(x_k^4 - (y_1^k)^2) =
\lambda_2 x_k^2(x_k^2 - (y^k_1)^2/x_k^2) \leq 0,\]
showing that pseudo-normality at $\xb$ in direction $u$ is fulfilled.
\end{example}

Nevertheless, the previous theorem still allows us to use sufficient conditions.
Consider the following second-order sufficient condition for directional PQ-normality w.r.t. $\mli$, SOSCdirPQN($\mli$) for short.
\begin{proposition}[SOSCdirPQN($\mli$)]\label{Cor: SOSCdirPQN}
Let $\bar x$ be feasible for \eqref{eq:GMP} with $F$ twice differentiable at $\xb$
and let Assumption \ref{Ass} for some $\mli$ be fulfilled.
Then directional PQ-normality w.r.t. $\mli$, in particular MSCQ, holds at $\bx$
if the following SOSCdirPQN($\mli$) is fulfilled:
For every $u \in \R^n$ with $\|u\|=1$, every $0 \neq \bar\lambda \in \Lambda^0(\xb;u)$
with $\ip{\bar\lambda_{\nu}}{\nabla F_{\nu}(\xb) u} = 0$, for all $\nu \in I_{\mli}(\bar\lambda)$ and every $w$ with $\ip{w}{u} = 0$ condition \eqref{eq : SSOSCMPQN} is fulfilled.
\end{proposition}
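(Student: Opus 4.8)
The plan is to derive SOSCdirPQN($\mli$) from the second-order sufficient condition for a local weak efficient solution, exactly as in the non-directional case (Corollary \ref{Cor : PQNormSC}), but now carrying along the directional information. First I would invoke Theorem \ref{The:DirModCQ}: under Assumption \ref{Ass} for $\mli$, directional PQ-normality w.r.t. $\mli$ in direction $u$ follows once we rule out the existence of a nonzero $\bar\lambda \in \Lambda^0(\xb;u)$ admitting a sequence $\{x^k\}\to\xb$ with $(x^k-\xb)/\norm{x^k-\xb}\to u$ and satisfying \eqref{eq : MPQNdef}; doing this for all unit $u$ yields directional PQ-normality, and hence MSCQ by Theorem \ref{The : MainDQNtoMS}. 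So the task reduces to: given unit $u$ and $0\neq\bar\lambda\in\Lambda^0(\xb;u)$, show no such sequence can exist when \eqref{eq : SSOSCMPQN} holds along the critical cone associated with $\bar\lambda$ and $u$.

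Next I would argue by contradiction. Suppose such a sequence $\{x^k\}$ exists. Write $x^k = \xb + t_k u + o(t_k)$ with $t_k := \norm{x^k-\xb}\downarrow 0$, so $(x^k-\xb)/t_k \to u$. Passing to a subsequence, the normalized second-order remainder $(x^k - \xb - t_k u)/t_k^2$ either converges to some $w$ (possibly after rescaling) or escapes to infinity; the standard trick is to set $w$ as a suitable limit of $(x^k-\xb)/t_k - u$ scaled by $1/t_k$, noting $\ip{u}{(x^k-\xb)/t_k - u}\to 0$ forces $\ip{w}{u}=0$ in the bounded case. From \eqref{eq : MPQNdef}, i.e. $\ip{\bar\lambda_\nu}{F_\nu(x^k)-F_\nu(\xb)}>0$ for all $\nu\in I_{\mli}(\bar\lambda)$, a first-order Taylor expansion of $F_\nu$ at $\xb$ gives $t_k\ip{\bar\lambda_\nu}{\nabla F_\nu(\xb)u} + o(t_k) > 0$, which in the limit yields $\ip{\bar\lambda_\nu}{\nabla F_\nu(\xb)u}\geq 0$ for each $\nu\in I_{\mli}(\bar\lambda)$; combined with $\bar\lambda\in\Lambda^0(\xb;u)\subset\Lambda^0(\xb)$ and the identity \eqref{eq : LinL0} ($\sum_\nu \ip{\bar\lambda_\nu}{\nabla F_\nu(\xb)z}=\ip{\bar\lambda}{\nabla F(\xb)z}=0$ for all $z$), this forces $\ip{\bar\lambda_\nu}{\nabla F_\nu(\xb)u}=0$ for all $\nu\in I_{\mli}(\bar\lambda)$. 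Thus $u$ is admissible for SOSCdirPQN($\mli$), and we obtain the genuinely directional piece of information the hypothesis wants. A clean second-order Taylor expansion of each $\ip{\bar\lambda_\nu}{F_\nu}$ at $\xb$ along $x^k = \xb + t_k u + t_k^2 w + o(t_k^2)$ then gives $\ip{\bar\lambda_\nu}{F_\nu(x^k)-F_\nu(\xb)} = t_k^2\big(\ip{\bar\lambda_\nu}{\nabla F_\nu(\xb)w} + \tfrac12 u^T\nabla^2\ip{\bar\lambda_\nu}{F_\nu}(\xb)u\big) + o(t_k^2)$, using the first-order term vanishes. Since the left side is positive for every $\nu\in I_{\mli}(\bar\lambda)$, dividing by $t_k^2$ and letting $k\to\infty$ yields $\ip{\bar\lambda_\nu}{\nabla F_\nu(\xb)w} + \tfrac12 u^T\nabla^2\ip{\bar\lambda_\nu}{F_\nu}(\xb)u \geq 0$ for all $\nu\in I_{\mli}(\bar\lambda)$, i.e. the minimum over $\nu$ is nonnegative, contradicting \eqref{eq : SSOSCMPQN} (up to the harmless factor $\tfrac12$ on the Hessian term, which can be absorbed).

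The main obstacle is the unbounded case: the rescaled second-order remainders $(x^k - \xb - t_k u)/t_k^2$ need not stay bounded, so one must handle a secondary scale. The standard remedy is to choose the exponent adaptively — set $s_k := \norm{x^k - \xb - t_k u}$ and rescale by $s_k$ rather than $t_k^2$, obtaining a limit direction $w$ with $\ip{w}{u}=0$; if $s_k/t_k^2 \to \infty$, the expansion shows the dominant term is $t_k s_k \ip{\bar\lambda_\nu}{\nabla F_\nu(\xb)w}$, so positivity of $\ip{\bar\lambda_\nu}{F_\nu(x^k)-F_\nu(\xb)}$ forces $\ip{\bar\lambda_\nu}{\nabla F_\nu(\xb)w}\geq 0$ for all $\nu$, again contradicting \eqref{eq : SSOSCMPQN} with $u$ replaced by $0$ in the Hessian slot (which is allowed since $w\neq 0$, $\ip{w}{u}=0$ — but note the condition as stated quantifies over $w$ with $\ip{w}{u}=0$ for a \emph{fixed} admissible $u\neq 0$, so one takes the same $u$ and this $w$). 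This case analysis mirrors the structure of Gfrerer's SOSCMS proofs and of \cite[Theorem 4]{Bi06} for weak efficiency, so I would simply cite that the second-order sufficient condition for a directional local weak efficient solution applies; the cleanest writeup is to state a directional analogue of Theorem \ref{Pro : PQNormVSMax} identifying directional PQ-normality with a directional weak-efficiency property and then quote the known second-order test, with the contradiction argument above serving as the proof.
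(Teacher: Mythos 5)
Your reduction via Theorem \ref{The:DirModCQ} and your treatment of the bounded case are sound and essentially match the paper's own strategy (including the observation that $\ip{\bar\lambda_{\nu}}{\nabla F_{\nu}(\xb) u}=0$ follows from the first-order expansion together with \eqref{eq : LinL0}, and that the factor $\tfrac12$ is absorbed by rescaling $w$). The gap is in the unbounded case. There, after rescaling by $s_k=\norm{x^k-\xb-t_k u}$ you obtain $\ip{\bar\lambda_{\nu}}{\nabla F_{\nu}(\xb) w}\ge 0$ for all $\nu\in I_{\mli}(\bar\lambda)$ and claim this contradicts \eqref{eq : SSOSCMPQN}. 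It does not: by \eqref{eq : LinL0} these nonnegative numbers sum to zero, hence all vanish, so $\min_{\nu}\big(\ip{\bar\lambda_{\nu}}{\nabla F_{\nu}(\xb)(\tau w)}+u^T\nabla^2\ip{\bar\lambda_{\nu}}{F_{\nu}}(\xb)u\big)=\min_{\nu} u^T\nabla^2\ip{\bar\lambda_{\nu}}{F_{\nu}}(\xb)u$ for every $\tau$, and SOSCdirPQN($\mli$) precisely \emph{asserts} this is negative --- consistency, not contradiction. (Your ``replace $u$ by $0$ in the Hessian slot'' is not an instance of the stated condition, as your own parenthesis concedes.) Once $\nabla\varphi^{\bar\lambda}(\xb)w=0$ you would have to iterate the rescaling on $w^k-w$, and this need not terminate. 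Incidentally, the dominant term at that scale is $s_k\ip{\bar\lambda_{\nu}}{\nabla F_{\nu}(\xb)w}$, not $t_k s_k\ip{\bar\lambda_{\nu}}{\nabla F_{\nu}(\xb)w}$.

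The paper closes this case by a different device, which you should adopt: normalize in the \emph{image} space. Set $\psi^k:=\varphi^{\bar\lambda}(x^k)-\varphi^{\bar\lambda}(\xb)$ and $p:=\lim \psi^k/\norm{\psi^k}$, so $p\ge 0$ and $\norm{p}=1$. If $\norm{\psi^k}/\norm{x^k-\xb}^2\to\infty$, dividing the Taylor expansion by $\norm{\psi^k}$ kills the second-order term and exhibits $p$ as a limit of points of the closed subspace $\Range\nabla\varphi^{\bar\lambda}(\xb)$; but any $p=\nabla\varphi^{\bar\lambda}(\xb)z$ satisfies $\sum_{\nu}p_{\nu}=\ip{\bar\lambda}{\nabla F(\xb)z}=0$ by \eqref{eq : LinL0}, which together with $p\ge 0$ forces $p=0$, contradicting $\norm{p}=1$ --- no appeal to \eqref{eq : SSOSCMPQN} is needed in this case at all. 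Only in the bounded case is SOSCdirPQN($\mli$) invoked, exactly as in your argument. Finally, do not lean on a ``directional analogue of Theorem \ref{Pro : PQNormVSMax}'' stated as an equivalence: Example \ref{Ex:DirVerDiff} shows the simplified directional form is strictly stronger than directional PQ-normality in general, so only the sufficiency direction (Theorem \ref{The:DirModCQ}) is available, and \cite{Bi06} does not supply a ready-made directional second-order test to quote.
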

\begin{proof}
 Assume that directional PQ-normality w.r.t. $\mli$ is violated.
 Theorem \ref{The:DirModCQ} yields the existence of
 $u \in \R^n$, $0 \neq \bar \lambda \in \Lambda^0(\xb;u)$
 and a sequence $\{x^k\} \to \bar x$ with $(x^k - \xb)/\norm{x^k - \xb} \to u$
 such that
 $\varphi^{\bar\lambda}_\nu(x^k) - \varphi^{\bar\lambda}_\nu(\xb) > 0$ for all $\nu \in I_{\mli}(\bar\lambda)$ with $\varphi^{\bar\lambda}$ as in \eqref{eq:varphi}.
 Hence, by passing to a subsequence if necessary, we can assume that $(\varphi(x^k) - \varphi(\xb))/\norm{\varphi(x^k) - \varphi(\xb)} \to p$ with $p \geq 0$ and $\norm{p} = 1$,
 where for simplification we dropped the upper index $\bar\lambda$ from $\varphi$.

 By Taylor expansion, we have
 \begin{equation} \label{eq : TaylorExp}
  \frac{\norm{\varphi(x^k) - \varphi(\xb)}}{\norm{x^k - \xb}^2} \frac{\varphi(x^k) - \varphi(\xb)}{\norm{\varphi(x^k) - \varphi(\xb)}} =
  \nabla \varphi(\xb) \frac{(x^k - \xb)}{\norm{x^k - \xb}^2} + u^T
  \nabla^2 \varphi(\xb) u + o(1),
 \end{equation}
 where $u^T
  \nabla^2 \varphi(\xb) u$ denotes the vector in $\R^{\vert I_{\mli}(\bar\lambda) \vert}$ with components
  $u^T
  \nabla^2 \varphi^{\bar\lambda}_\nu(\xb) u$ for $\nu \in I_{\mli}(\bar\lambda)$.
 If there exists a subsequence $K$ such that $\norm{\varphi(x^k) - \varphi(\xb)}/\norm{x^k - \xb}^2 \to \infty$,
 we conclude from \eqref{eq : TaylorExp} that
 \begin{equation*}
  \frac{\varphi(x^k) - \varphi(\xb)}{\norm{\varphi(x^k) - \varphi(\xb)}} = \nabla \varphi(\xb) \frac{(x^k - \xb)}{\norm{\varphi(x^k) - \varphi(\xb)}} + q^k,
 \end{equation*}
 where $q^k \to 0$ for $k \in K$. Passing to a subsequence if necessary, and taking into account
 that $\nabla \varphi(\xb) (x^k - \xb)/\norm{\varphi(x^k) - \varphi(\xb)} \in \Range(\nabla \varphi(\xb))$ with $\Range(\nabla \varphi(\xb))$
 being a closed set, we conclude that $p \in \Range(\nabla \varphi(\xb))$,
 i.e., there exists $z \in \R^n$ with
 $p = \nabla \varphi(\xb) z = (\ip{\bar\lambda_{\nu}}{\nabla F_{\nu}(\xb) z})_{\nu \in I_{\mli}(\bar\lambda)}$.
 This is, however, a contradiction with $\norm{p} = 1$,
 since we obtain that $p=0$ by $p \geq 0$ and \eqref{eq : LinL0},
 which clearly holds due to $\bar\lambda \in \Lambda^0(\xb;u) \subset \Lambda^0(\xb)$.

 Consequently, $\norm{\varphi(x^k) - \varphi(\xb)}/\norm{x^k - \xb}^2$ remains bounded
 and by passing to a subsequence $K$ if necessary we assume that $\norm{\varphi(x^k) - \varphi(\xb)}/\norm{x^k - \xb}^2 \to \alpha \geq 0$.
 Note also that in this case we get $\nabla \varphi(\xb) u = 0$
 by \eqref{eq : TaylorExp}.
 By similar arguments as before, \eqref{eq : TaylorExp} now yields the existence of $w$ such that
 \begin{equation*}
  \alpha p = \nabla \varphi(\xb) w + u^T \nabla^2 \varphi(\xb) u.
 \end{equation*}
 Moreover, we can clearly take $w$ with $\ip{w}{u} = 0$ since $\R^n$ is the direct sum of the span of $u$ and its orthogonal complement
 and $\nabla \varphi(\xb) u = 0$.
 The assumed SSOSCdirPQN($\mli$) \eqref{eq : SSOSCMPQN} implies the existence of $\nu \in I_{\mli}(\bar\lambda)$ with $\alpha p_\nu < 0$,
 a contradiction. This completes the proof.
\end{proof}
\begin{remark}
 Note that $\Lambda^0(\xb;u) \neq \emptyset$ includes the condition $\nabla F(\xb) u \in T_{\Gamma}(F(\xb))$.
\end{remark}

\if{
\noindent
In the definition of SOSCdirPQN($\mli$) we explicitly assume $\ip{\bar\lambda_{\nu}}{\nabla F_{\nu}(\xb) u} = 0$ for all $\nu \in I_{\mli}(\bar\lambda)$
in order to make it clear that SOSCdirPQN($\mli$) is indeed milder than SOSCPQN($\mli$).
In fact, we can omit it from the assumption since it actually follows from
$\bar\lambda \in \Lambda^0(\xb;u)$.
}\fi

As before, we will refer to SOSCdirPQN($\mli^P$)
and SOSCdirPQN($\mli^Q$) as
second-order sufficient condition for directional pseudo/quasi-normality (SOSCdirPN and SOSCdirQN).

The following directional counterpart of Theorem \ref{Pro:SOSCMPNtoSOSCMPQN}
follows by the same arguments.
\begin{theorem} \label{Pro:SOSCMPNtoSOSCMPQNdir}
 Let $\bar x$ be feasible for \eqref{eq:GMP} with $F$ twice differentiable at $\xb$ and consider two multi-indices $\mli \in \N^l, \mli^{\prime} \in \N^{l^{\prime}}$
 with $\mli^{\prime} \subset \mli$.
 Then SOSCdirPQN($\mli$) implies SOSCdirPQN($\mli^{\prime}$).
 In particular, we have
 SOSCdirPN $\Rightarrow$ SOSCdirPQN($\mli$) $\Rightarrow$ SOSCdirQN.
\end{theorem}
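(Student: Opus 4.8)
The plan is to transcribe the proof of Theorem~\ref{Pro:SOSCMPNtoSOSCMPQN} into the directional setting, the only modification being that one fixes a direction $u$ at the outset and replaces the critical cone $\Lambda^0(\xb)$ by its directional counterpart $\Lambda^0(\xb;u)$ throughout. Concretely, to verify SOSCdirPQN($\mli^{\prime}$) I would fix $u\in\R^n$ with $\|u\|=1$, take an arbitrary $0\neq\bar\lambda\in\Lambda^0(\xb;u)$ with $\ip{\bar\lambda_{\nu^{\prime}}}{\nabla F_{\nu^{\prime}}(\xb)u}=0$ for all $\nu^{\prime}\in I_{\mli^{\prime}}(\bar\lambda)$, and an arbitrary $w$ with $\ip{w}{u}=0$, the aim being to produce some $\nu^{\prime}\in I_{\mli^{\prime}}(\bar\lambda)$ for which \eqref{eq : SSOSCMPQN} (with $\mli$ replaced by $\mli^{\prime}$) holds.

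The first step would be to lift the orthogonality condition from $\mli^{\prime}$ to $\mli$. Since $\mli^{\prime}\subset\mli$, each $\nu\in I_{\mli}$ carries an index set $I^\nu_{\mli^{\prime}}\subset I_{\mli^{\prime}}$ with $z_\nu=(z_{\nu^{\prime}})_{\nu^{\prime}\in I^\nu_{\mli^{\prime}}}$, and hence $\bar\lambda_\nu=(\bar\lambda_{\nu^{\prime}})_{\nu^{\prime}\in I^\nu_{\mli^{\prime}}}$; in particular $\nu\in I_{\mli}(\bar\lambda)$ exactly when $I^\nu_{\mli^{\prime}}\cap I_{\mli^{\prime}}(\bar\lambda)\neq\emptyset$, and the indices $\nu^{\prime}\in I^\nu_{\mli^{\prime}}$ with $\bar\lambda_{\nu^{\prime}}=0$ are inert in all the sums that follow. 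Summing $\ip{\bar\lambda_{\nu^{\prime}}}{\nabla F_{\nu^{\prime}}(\xb)u}=0$ over $\nu^{\prime}\in I^\nu_{\mli^{\prime}}$ then yields $\ip{\bar\lambda_{\nu}}{\nabla F_{\nu}(\xb)u}=0$ for every $\nu\in I_{\mli}(\bar\lambda)$, the scalar products splitting additively along the block structure of the factorization.

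I would then invoke SOSCdirPQN($\mli$) for the same triple $u,\bar\lambda,w$ --- which is legitimate because $\bar\lambda\in\Lambda^0(\xb;u)$ and the orthogonality w.r.t.\ $\mli$ has just been established --- to obtain some $\bar\nu\in I_{\mli}(\bar\lambda)$ with $\ip{\bar\lambda_{\bar\nu}}{\nabla F_{\bar\nu}(\xb)w}+u^T\nabla^2\ip{\bar\lambda_{\bar\nu}}{F_{\bar\nu}}(\xb)u<0$. Re-expanding this expression over $I^{\bar\nu}_{\mli^{\prime}}$ gives $\sum_{\bar\nu^{\prime}\in I^{\bar\nu}_{\mli^{\prime}}}(\ip{\bar\lambda_{\bar\nu^{\prime}}}{\nabla F_{\bar\nu^{\prime}}(\xb)w}+u^T\nabla^2\ip{\bar\lambda_{\bar\nu^{\prime}}}{F_{\bar\nu^{\prime}}}(\xb)u)<0$, so at least one summand, indexed by some $\bar\nu^{\prime}\in I^{\bar\nu}_{\mli^{\prime}}\cap I_{\mli^{\prime}}(\bar\lambda)$, is negative, which is exactly \eqref{eq : SSOSCMPQN} for $\mli^{\prime}$. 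The displayed chain SOSCdirPN $\Rightarrow$ SOSCdirPQN($\mli$) $\Rightarrow$ SOSCdirQN then follows from the relations $\mli^Q\subset\mli\subset\mli^P$ valid for every $\mli$. I do not expect a genuine obstacle here: everything is a mechanical adaptation of the non-directional proof, and the only real point of care is the bookkeeping of the refinement index sets together with the remark that the components with vanishing multiplier drop out of every sum --- which is precisely the reason the statement can be asserted to ``follow by the same arguments''.
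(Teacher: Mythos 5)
Your proposal is correct and coincides with the paper's intent: the paper proves the non-directional version (Theorem \ref{Pro:SOSCMPNtoSOSCMPQN}) by exactly the block-summing and re-expansion argument you describe, and for the directional statement it simply remarks that the same arguments apply, which is what you have carried out. Your explicit bookkeeping of the refinement index sets and the observation that components with vanishing multiplier drop out of every sum is precisely the (routine) content the paper leaves implicit.
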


We point out here that, unlike in the non-directional case, we could not find an example to show that the above implications can be indeed strict,
so this remains an open question.

\subsection{Summary}

\noindent
We now summarize our findings of this section. We studied the directional versions of  pseudo- and quasi-normality, first established in  the paper by Bai et al.  \cite{BYZ19}.
In addition, we introduced the new concept of PQ-normality, together with its directional
counterpart, that unifies the two standard CQs.
As a result, we obtained novel and improved results for the metric subregularity constraint qualification
and we established interesting connections among the well-known CQs and the new ones.

\noindent
In the following diagram, we summarize the relations between the various constraint qualifications weaker than
GMFCQ that imply MSCQ.
The point-based conditions are naturally of primary interest
and are hence emphasized in double-framed boxes.
Note that pseudo- and quasi-normality are included as special cases
of PQ-normality for $\mli^P$ and $\mli^Q$.

\unitlength1mm
\begin{figure}[!h]
\begin{picture}(145,75)
\put(35,64){\framebox(40,11){}}
\put(35.5,64.5){\framebox(39,10){\parbox{36mm}{GMFCQ/Mord. crit.:\\
$\Lambda^0(\xb) = \{0\}$}}}
\put(45,62){\vector(-3,-1){25}}
\put(45,62){\vector(0,-1){10}}
\put(45,62){\vector(3,-1){25}}
\put(0,41){\framebox(30,11){}}
\put(0.5,41.5){\framebox(29,10){\parbox{27mm}{FOSCMS: $u \neq 0$:\\$\Lambda^0(\xb;u) = \{0\}$}}}
\put(35,44){\framebox(20,6){PQ-N($\mli$)}}
\put(70,41){\framebox(75,11){}}
\put(70.5,41.5){\framebox(74,10){\parbox{73mm}{SOSCPQN($\mli$): $0\neq\lambda \in \Lambda^0(\xb), u \neq 0, w$:\\
$\min_{\nu} \big( \ip{\lambda_{\nu}}{\nabla F_{\nu}(\xb) w} + u^T \nabla^2 \ip{\lambda_{\nu}}{F_{\nu}}(\xb) u \big) < 0$}}}
\put(69,46){\vector(-1,0){13}}
\put(57,47){\scriptsize Ass.\ref{Ass}($\mli$)}
\put(10,39){\vector(0,-1){10}}
\put(10,39){\vector(3,-1){25}}
\put(45,41){\vector(-3,-1){33}}
\put(80,39){\vector(-3,-1){25}}
\put(0,21){\framebox(20,6){Dir.PQ-N($\mli$)}}
\put(35,18){\framebox(75,11){}}
\put(35.5,18.5){\framebox(74,10){\parbox{73mm}{SOSCdirPQN($\mli$): $\norm{u}=1,0\neq\lambda \in \Lambda^0(\xb;u),w$:\\
$\min_{\nu} \big( \ip{\lambda_{\nu}}{\nabla F_{\nu}(\xb) w} + u^T \nabla^2 \ip{\lambda_{\nu}}{F_{\nu}}(\xb) u \big) < 0$}}}
\put(34,23){\vector(-1,0){13}}
\put(22,24){\scriptsize Ass.\ref{Ass}($\mli$)}
\put(10,19){\vector(0,-1){10}}
\put(0,1){\framebox(20,6){MSCQ}}
\end{picture}
\caption{Constraint qualifications for GMP \eqref{eq:GMP}.}
\end{figure}
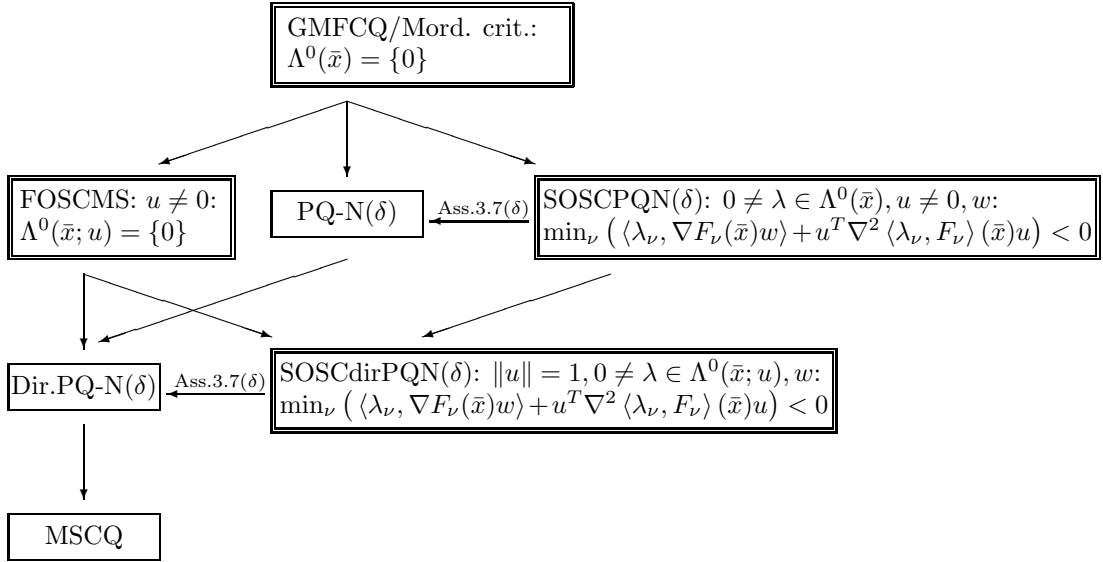 
\section{Programs with disjunctive constraints}\label{sec:Disjunctive Sets}

\noindent
In this section we study a special case of problem \eqref{eq:GMP}
in which the set $\Gamma$ is {\em disjunctive}, that is it can be written as a union of finitely many convex polyhedra, i.e.,
\begin{equation}\label{eq:Disjunctive Sets}
\Gamma=\bigcup_{\ell=1}^N \Gamma^{\ell} \quad \text{with}\quad \Gamma^{\ell} \subset \R^d\;\text{convex polyhedral},
\end{equation}
where we refer the reader to Section \ref{sec:Poly} for a definition of convex polyhedral sets.
Subsequently, we call problem \eqref{eq:GMP} with $\Gamma$ disjunctive (in the sense of \eqref{eq:Disjunctive Sets}) as a
{\em (mathematical) program with disjunctive constraints}
or simply a {\em disjunctive program}.

Disjunctive programs have been systematically studied for decades, see, e.g., \cite{St03} and the references therein.
For more recent works on disjunctive programs, which are also more related to our approach,
we refer to the papers \cite{BeGfr16d,FleKanOut07,Gfr14a,Me19} and the thesis \cite{Be17}.

The most prominent examples of disjunctive programs are  the aforementioned classes of MPCCs, MPVCs,
as well as {\em mathematical programs with relaxed cardinality constraints} (MPrCCs), {\em mathematical programs with relaxed probabilistic constraints} (MPrPCs),
and the recently introduced
{\em mathematical programs with switching constraints} (MPSCs).

For the mathematical background and several applications we refer the reader to the textbooks
\cite{LuPaRa96,OutKoZo98} for MPCCs as well as to the book \cite{De02} on the closely related class of bilevel programs.
As for MPVCs we refer to the paper
\cite{AchKa08} and the thesis \cite{Ho09} and the references therein.
For relaxed cardinality constrained problems we point to the papers \cite{BKS16,CerKanSchw16}. 
For MPrPCs see \cite{AB16}, and for MPSCs see \cite{Me18}. 

Dropping standard constraints for brevity, all of these programs exhibit the general form
\begin{equation} \label{eq : MPsDCs}
 \min_{x \in \R^n} f(x) \quad  \st \quad (G_i(x),H_i(x)) \in \widetilde\Gamma \ (i \in V),
\end{equation}
where $f,G_i,H_i: \R \to \R$ are continuously differentiable,
$V$ is a finite index set and $\widetilde\Gamma$
is given by
\begin{itemize}
\item[(a)] (complementarity constraints)
\[
\widetilde\Gamma:=\Gamma_{\text{CC}}:=\set{(a,b)}{ab=0, \; a,b\geq 0}=(\R_+\times \{0\}) \cup (\{0\}\times \R_+);
\]

\item[(b)] (vanishing constraints)
\[
\widetilde\Gamma:=\Gamma_{\text{VC}}:=\set{(a,b)}{ab\leq 0, \; b\geq 0}=(\R_-\times \R_+) \cup (\R_+\times \{0\});
\]

\item[(c)] (relaxed cardinality constraints)
\[
\widetilde\Gamma:=\Gamma_{\text{rCC}}:= \set{(a,b)}{ab=0, \;b\in [0,1]}= (\R\times \{0\}) \cup (\{0\}\times [0,1]);
\]

\item[(d)] (relaxed probabilistic constraints)
\[
\widetilde\Gamma:=\Gamma_{\text{rPC}}:= \set{(a,b)}{ab\leq 0, \;b\in [0,1]}= (\R_-\times [0,1]) \cup (\R_+\times \{0\});
\]

\item[(e)] (switching constraints)
\[
\widetilde\Gamma:=\Gamma_{\text{SC}}:= \set{(a,b)}{ab=0}= (\R\times \{0\}) \cup (\{0\}\times \R).
\]
\end{itemize}
Clearly, $\Gamma_{\text{CC}}$, $\Gamma_{\text{VC}}$, $\Gamma_{\text{rCC}}$, $\Gamma_{\text{rPC}}$ and
$\Gamma_{\text{SC}}$ are  disjunctive, rendering the resulting optimization problem a disjunctive program. We point out that there is  generally not  a unique way to write the  disjunctive sets in (a)-(e)  as a union of convex polyhedral sets. For instance, $\Gamma_{\text{VC}}$ can be alternatively written as
$\Gamma_{\text{VC}} = (\R_-\times \R_+) \cup (\R \times \{0\})$.

The main finding of this section is to show that the crucial Assumption \ref{Ass} is automatically fulfilled
for disjunctive programs. In addition, we also prove that directional pseudo-normality
does not only imply, but is, in fact, equivalent to its simplified form
from Theorem \ref{The:DirModCQ},
which suggests that our sufficient conditions are not too restrictive.
Recall that Example \ref{Ex:DirVerDiff} shows that, in general, the simplified form is strictly stronger.
For these purposes, we commence our study with a preliminary section on the variational geometry of convex polyhedral sets and how these extend to a more general setting.

\subsection{Key properties of convex polyhedral sets}\label{sec:Poly}

Recall that a set is said to be {\em convex polyhedral} (or a {\em convex polyhedron}) if it is the intersection of finitely many closed half-spaces.  In particular, for a convex polyhedron $P\subset \R^s$ there exist $p\in \N$ and $ a_j\in \R^s,\;\beta_j\in \R\;(j=1,\dots,p)$ such that
\[
P=\set{y}{\ip{a_j}{y}\leq \beta_j\;(j=1,\dots,p)}.
\]
Clearly, every convex polyhedron is closed.
Due to convexity of $P$, the  regular and limiting normal
cone to $P$ coincide with the classical normal cone of convex analysis, see \eqref{eq : convexNC}.
Given $y\in P$, we have
\[
N_P(y)=\set{\sum_{j\in J(y)}\lambda_j a_j}{\lambda_j\geq 0},
\]
where $J(y):=\set{j\in\{1,\dots,p\}}{\ip{a_j}{y}=\beta_j}$, i.e., the normal cone of $P$ at $y$ is the convex cone generated by $\set{a_j}{j\in J(y)}$, see e.g. \cite[p.~67]{HUL 01}. Therefore, there is only a finite number of different normal cones induced by a convex polyhedral set, in fact, this number is bounded by $2^p$ (as there can be at most $2^p$ active sets in $\{1,\dots,p\}$).

We will make use of two essential properties of convex polyhedra.
The first one is the well-known {\em exactness of tangent approximation}, see \cite[Exercise 6.47]{RoW 98}:
Given a convex polyhedron $P$, for any $\bar y \in P$ there exists a neighborhood $U$ of $\bar y$ such that
\begin{equation}\label{eq:PolyPropConv}
P \cap U = \big( \bar{y} + T_P(\bar{y})\big) \cap U.
\end{equation}
In particular, taking into account \cite[Exercise 6.44]{RoW 98}, one has
\[
N_{P}(\bar y)= N_{T_{P}(\bar y)}(0).
\]
The second property is closely related to Assumption \ref{Ass}
as stated in the following lemma.

\begin{lemma}\label{lem:PolyProp}
 Let $P\subset \R^s$ be closed and convex,  let
 $\{y^k\in P\}\to \bar y$ and $\{\lambda^k\in N_{P}(y^k)\}\to \bar \lambda$.
 Then there exists a subsequence $K\subset \N$ such that the following hold:
\begin{itemize}
\item[(i)] We have $\ip{\bar \lambda}{y^k - \bar y}\leq 0$ for all $k\in K$;
\item[(ii)] Moreover, if $P$ is polyhedral then $\ip{\bar \lambda}{y^k - \bar y} = 0$ for all $k\in K$.
\end{itemize}
\end{lemma}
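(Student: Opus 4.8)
The plan is to exploit the fact that a convex set $P$ has only "few" distinct normal cones near $\bar y$, in a sense made precise by the structure of its tangent cones; for polyhedral $P$ this becomes literally finite. First I would recall that by convexity of $P$ and the definition of the classical normal cone~\eqref{eq : convexNC}, the inclusion $\lambda^k \in N_P(y^k)$ means $\ip{\lambda^k}{v - y^k} \le 0$ for every $v \in P$. Applying this with $v = \bar y \in P$ gives $\ip{\lambda^k}{\bar y - y^k} \le 0$, i.e. $\ip{\lambda^k}{y^k - \bar y} \ge 0$, for \emph{every} $k$. So I have one inequality for free; the content of part~(i) is to get the \emph{reverse} inequality $\ip{\bar\lambda}{y^k - \bar y} \le 0$ along a subsequence, which is not automatic because it is the limiting multiplier $\bar\lambda$ paired against $y^k$, not $\lambda^k$.

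For part~(i): since $\lambda^k \in N_P(y^k)$ and $y^k \in P$, apply the normal-cone inequality at the point $y^k$ with the test vector $v = y^m \in P$ for another index $m$, obtaining $\ip{\lambda^k}{y^m - y^k} \le 0$. Now I would pass to the limit in $k$ (with $m$ fixed): since $\lambda^k \to \bar\lambda$ and $y^k \to \bar y$, this yields $\ip{\bar\lambda}{y^m - \bar y} \le 0$ for every $m$. This already gives the conclusion of part~(i) for \emph{all} $m$, so in fact no passing to a subsequence is needed for~(i) at all — the subsequence language in the statement is presumably there only to unify with part~(ii). So part~(i) is essentially immediate from two applications of the convex normal-cone inequality plus one limit.

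For part~(ii), the polyhedral case, I want equality $\ip{\bar\lambda}{y^k - \bar y} = 0$ along a subsequence. Here I would use the finite description $P = \{y : \ip{a_j}{y} \le \beta_j,\ j = 1,\dots,p\}$ and the formula $N_P(y) = \{\sum_{j \in J(y)} \mu_j a_j : \mu_j \ge 0\}$ with $J(y)$ the active index set at $y$. Since there are only finitely many possible active sets, by passing to a subsequence $K$ I may assume $J(y^k) = \hat J$ is constant along $K$. One checks that $\hat J \subset J(\bar y)$ (active constraints persist in the limit since $\ip{a_j}{y^k} = \beta_j$ passes to $\ip{a_j}{\bar y} = \beta_j$). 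Write $\lambda^k = \sum_{j \in \hat J} \mu_j^k a_j$ with $\mu_j^k \ge 0$; for each $j \in \hat J$ we have $\ip{a_j}{y^k - \bar y} = \beta_j - \ip{a_j}{\bar y}$, which is $0$ precisely because $j \in \hat J \subset J(\bar y)$. Hence $\ip{\lambda^k}{y^k - \bar y} = \sum_{j \in \hat J} \mu_j^k \ip{a_j}{y^k - \bar y} = 0$ for all $k \in K$; passing to the limit (or just noting $\bar\lambda$ also lies in the cone generated by $\{a_j : j \in \hat J\}$, since that cone is closed and $\lambda^k \to \bar\lambda$) gives $\bar\lambda = \sum_{j \in \hat J} \bar\mu_j a_j$ with $\bar\mu_j \ge 0$, and therefore $\ip{\bar\lambda}{y^k - \bar y} = \sum_{j \in \hat J} \bar\mu_j \ip{a_j}{y^k - \bar y} = 0$ for every $k \in K$.

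The only genuinely delicate point is the bookkeeping in part~(ii): making sure that the constant active set $\hat J$ obtained along the subsequence contains only indices active at $\bar y$, and that the limiting multiplier $\bar\lambda$ is expressible using exactly those $a_j$ with $j \in \hat J$ (closedness of the finitely generated cone $\{\sum_{j\in\hat J}\mu_j a_j : \mu_j \ge 0\}$ handles this). Everything else is a routine limiting argument in the convex-analysis normal-cone inequality. I would present~(i) first in full generality, then specialize to the polyhedral representation for~(ii).
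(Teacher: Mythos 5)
Your proof is correct and follows essentially the same route as the paper's: part (i) is the convex normal-cone inequality passed to the limit, and part (ii) rests on the finiteness of the polyhedron's normal-cone/active-set structure, which lets one fix a constant configuration along a subsequence and place $\bar\lambda$ in the corresponding closed, finitely generated cone. The only cosmetic difference is that the paper concludes (ii) by combining the reverse inequality $\ip{\bar \lambda}{y^k - \bar y}\geq 0$ (obtained from $\bar\lambda\in N_{P}(y^k)$ for $k$ in the subsequence) with part (i), whereas you compute the inner product to be zero directly from the generators $a_j$; both arguments are sound.
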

\begin{proof}
(i) Taking the limit in $\lambda^k \in N_{P}(y^k)$ yields
$\bar \lambda \in N_{P}(\bar y)$. In particular, as $y^k\in P$ we get $\ip{\bar \lambda}{y^k-\bar y}\leq 0 \;(k\in \N)$.
\smallskip

\noindent
 (ii) Recall from the discussion above, that for a convex polyhedral set there are only finitely many different normal cones. Hence,
 there exists a subsequence $K\subset \N$ such that
 $N_{P}(y^k) \equiv \mathcal{N}$ for all $k\in K$ and some closed convex cone $\mathcal{N}$.
  Consequently, from $\lambda^k \in N_{P}(y^k)$ we obtain $\bar \lambda \in \mathcal{N}=N_{P}(y^k)$ and hence
  $\ip{\bar \lambda}{y^k - \bar y}\geq 0$
  due to convexity of $P$ and $\bar y \in P$.
\end{proof}
The above lemma immediately yields that Assumption \ref{Ass} for the multi-index $\mli^P := d$ is fulfilled at every feasible point for program \eqref{eq:GMP} with convex polyhedral $\Gamma$, regardless of the constraint mapping $F$.
However, since we are not  primarily interested in this convex polyhedral setting, we now state the desirable properties from \eqref{eq:PolyPropConv} and Lemma \ref{lem:PolyProp} (ii) in a general form.
To this end, given an arbitrary closed set $C \subset \R^d$ and $\bar y \in C$, consider the following condition:
\begin{equation}\tag{P1}\label{eq : PolyhMain}
\exists \ U(\bar y): \quad C \cap U(\bar y) = \big( \bar{y} + T_C(\bar{y})\big) \cap U(\bar y),
\end{equation}
where $U(\bar y)$ denotes a neighborhood of $\bar y$.
Moreover, given also a multi-index $\mli \in \N^l$ with $\vert \mli \vert=d$
and $\bar \lambda \in \R^d$, consider the condition:
\begin{equation} \tag{P2}\label{eq : MainSetPropPQ}
\forall \{y^k\in C\}\to \bar y,\; \{\lambda^k\in \widehat N_{C}(y^k)\}\to \bar \lambda, \;\exists \; K\subset \N:\;
  \ip{\bar \lambda_{\nu}}{y_{\nu}^k - \bar y_{\nu}} = 0 \; (\nu \in I_{\mli},\;k\in K),
\end{equation}
where $K$ is a subsequence of $\N$.
Note that \eqref{eq : MainSetPropPQ} is automatically fulfilled
if $\bar \lambda \notin N_C(\bar y)$.
We will repeatedly refer to these conditions in the subsequent study and hence we formulated it for an arbitrary multi-index $\mli$.
Clearly, if $\xb$ is feasible for \eqref{eq:GMP} and $\Gamma$ satisfies \eqref{eq : MainSetPropPQ} for $\mli$, $\bar y = F(\xb)$ and every multiplier $\bar \lambda \in N_\Gamma(F(\xb))$,
then Assumption \ref{Ass} for $\mli$ is fulfilled at $\xb$.

Motivated by the disjunctive setting in \eqref{eq:Disjunctive Sets},
for the remainder of our study  we deal with sets
generated by  unions  and, in addition, Cartesian products of convex polyhedra (see the product setting in Section 5).
Hence, we now examine properties \eqref{eq : PolyhMain} and \eqref{eq : MainSetPropPQ} under these set operations on convex polyhedra.

Consider first a collection of closed sets $C^i \subset \R^d$ for
$i = 1,\ldots,q$ and set $C:= \bigcup_{i = 1}^q C^i$.
We start with some elementary observations about tangent and normal cones.
To this end, for $y \in C$, let us denote $\mathcal{I}(y):=\set{i\in\{1,\ldots,q\}}{y\in C^i}$ and observe that, by the definition of the tangent cone, we have
\begin{equation}\label{eq:Tangent Disjunctive}
T_{C}(y)=\bigcup_{i \in \mathcal{I}(y)} T_{C^i}(y),
\end{equation}
hence, by polarization
\begin{equation}\label{eq:Frechet Normal Disjunctive}
\widehat N_{C}(y)=\bigcap_{i \in \mathcal{I}(y)} \widehat N_{C^i}(y).
\end{equation}

\noindent
This yields the following elementary estimate
\begin{equation}\label{lem:Disj Normal}
  N_{C}(y) \subset \bigcup_{i \in \mathcal{I}(y)} N_{C^i}(y),
\end{equation}
which can be derived, e.g., from the more general result \cite[Proposition 3.1]{BeGfrOut18}.

On the other hand, consider now $C=\prod_{i = 1}^{r} C_i$,
where $C_i \subset \R^{d_i}$ is closed for $i=1, \ldots, r$
and let $y=(y_1, \ldots, y_r) \in C$.
By \cite[Proposition 6.41]{RoW 98}, we have
\begin{equation} \label{NCtoProducts}
 \widehat N_{C}(y) = \prod_{i = 1}^r \widehat N_{C_i}(y_i)\AND
 N_{C}(y) = \prod_{i = 1}^r N_{C_i}(y_i).
\end{equation}
Note that for the tangent cones, \cite[Proposition 6.41]{RoW 98} in general yields
only the inclusion $T_{C}(y) \subset \prod_{i = 1}^r T_{C_i}(y_i)$.
It can be easily seen, however, that
\begin{equation} \label{TCtoProducts}
 T_{C}(y) = \prod_{i = 1}^r T_{C_i}(y_i)
\end{equation}
holds, provided $C_i$ satisfies \eqref{eq : PolyhMain} at $\bar y_i$
for all $i=1,\ldots,r$.
Indeed, for $v=(v_i) \in \prod_{i = 1}^r T_{C_i}(y_i)$ we readily
obtain from \eqref{eq : PolyhMain} for every $i=1,\ldots,r$
the existence of $\alpha_i > 0$ such that $y_i + \alpha v_i \in C_i$
holds for all $\alpha \leq \alpha_i$.
Taking $\bar \alpha := \min \alpha_i$ yields $y + \alpha v \in C$
for all $\alpha \leq \bar \alpha$ and $v \in T_{C}(y)$ follows.

Next we show that conditions \eqref{eq : PolyhMain} and \eqref{eq : MainSetPropPQ} are preserved under unions and products,
provided the obvious adjustments of multi-index, point and multiplier
are made if needed.

\begin{proposition} \label{Lem : UnionToAss}
Let $C= \bigcup_{i = 1}^q C^i$ with $C^i \subset \R^d\;(i=1,\dots,q)$ closed and let $\bar y\in C$.
\begin{itemize}
\item[(i)] If $C^i$ satisfies \eqref{eq : PolyhMain} at $\bar y$
for all $i \in \mathcal{I}(\bar y)$, then $C$ also satisfies \eqref{eq : PolyhMain} at $\bar y$.
\item[(ii)] If $C^i$ satisfies \eqref{eq : MainSetPropPQ} for some multi-index $\mli$, the point $\bar y$ and some $\bar \lambda$ for all $i \in \mathcal{I}(\bar y)$, then $C$
also satisfies \eqref{eq : MainSetPropPQ} for $\mli$, $\bar y$ and $\bar \lambda$.
\end{itemize}
\end{proposition}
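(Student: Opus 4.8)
The plan is to reduce everything to the corresponding property of the \emph{active} sets $C^i$, $i \in \mathcal{I}(\bar y)$, using the local identities \eqref{eq:Tangent Disjunctive} and \eqref{eq:Frechet Normal Disjunctive}. The starting observation in both parts is that, since each $C^i$ is closed and $\bar y \notin C^i$ for $i \notin \mathcal{I}(\bar y)$, there is a neighborhood $U$ of $\bar y$ with $C \cap U = \bigcup_{i \in \mathcal{I}(\bar y)}(C^i \cap U)$; that is, only the sets indexed by $\mathcal{I}(\bar y)$ are visible near $\bar y$.

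For (i), I would intersect the neighborhood $U$ above with the neighborhoods $U_i$ furnished by \eqref{eq : PolyhMain} applied to $C^i$ for each $i \in \mathcal{I}(\bar y)$, obtaining a single neighborhood $U'$ of $\bar y$. On $U'$ one then computes $C \cap U' = \bigcup_{i \in \mathcal{I}(\bar y)}(C^i \cap U') = \bigcup_{i \in \mathcal{I}(\bar y)}\big((\bar y + T_{C^i}(\bar y)) \cap U'\big) = \big(\bar y + \bigcup_{i \in \mathcal{I}(\bar y)} T_{C^i}(\bar y)\big) \cap U'$, and by \eqref{eq:Tangent Disjunctive} the union in the last pair of parentheses equals $T_C(\bar y)$. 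Thus (i) is a short distributivity argument once the inactive sets have been discarded.

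For (ii), \eqref{eq : MainSetPropPQ} for $C$ holds vacuously unless $\bar \lambda \in N_C(\bar y)$, so I would assume this and fix arbitrary sequences $\{y^k \in C\}\to \bar y$ and $\{\lambda^k \in \widehat N_C(y^k)\}\to \bar \lambda$. For $k$ large one has $\mathcal{I}(y^k) \subset \mathcal{I}(\bar y)$, again since the inactive $C^i$ are closed and bounded away from $\bar y$; because $\mathcal{I}(y^k)$ takes values in a finite collection of index sets, one may pass to a subsequence along which $\mathcal{I}(y^k)$ is a fixed nonempty set $\mathcal{J} \subset \mathcal{I}(\bar y)$. Fix any $i_0 \in \mathcal{J}$. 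Then $y^k \in C^{i_0}$ along this subsequence and, by \eqref{eq:Frechet Normal Disjunctive}, $\lambda^k \in \widehat N_C(y^k) \subset \widehat N_{C^{i_0}}(y^k)$; hence property \eqref{eq : MainSetPropPQ} for $C^{i_0}$ (with the same $\mli$, $\bar y$ and $\bar \lambda$) applies to these two sequences and produces a further subsequence $K$ with $\ip{\bar\lambda_{\nu}}{y^k_\nu - \bar y_\nu} = 0$ for all $\nu \in I_{\mli}$ and $k \in K$, which is precisely \eqref{eq : MainSetPropPQ} for $C$.

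The only mild subtlety, and the closest thing to an obstacle, lies in the index-set bookkeeping in (ii): one has to observe that after stabilizing $\mathcal{I}(y^k)$ to a constant set $\mathcal{J}$ it suffices to invoke \eqref{eq : MainSetPropPQ} for a \emph{single} active component $C^{i_0}$ rather than combining the conclusions over all $i \in \mathcal{J}$, and that the passage to active indices is legitimate precisely because each $C^i$ is closed. Everything else is routine manipulation of finite unions and neighborhoods.
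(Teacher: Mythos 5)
Your proof is correct and follows essentially the same route as the paper's: part (i) is the same neighborhood-intersection and distributivity argument based on \eqref{eq:Tangent Disjunctive} and closedness of the inactive $C^i$, and part (ii) is the same pigeonhole/subsequence extraction of a single active index $j$ with $\lambda^k\in\widehat N_{C^j}(y^k)$ via \eqref{eq:Frechet Normal Disjunctive}, followed by invoking \eqref{eq : MainSetPropPQ} for that one component. Your extra bookkeeping (stabilizing $\mathcal{I}(y^k)$ to a fixed set $\mathcal{J}$, noting the vacuous case $\bar\lambda\notin N_C(\bar y)$) is harmless and just makes explicit what the paper leaves as ``one easily obtains.''
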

\begin{proof}
Denoting $U^i(\bar y)$ for $i \in \mathcal{I}(\bar y)$
the neighborhoods given by the assumption (i)
and taking into account \eqref{eq:Tangent Disjunctive},
the first statement follows easily by setting
$U(\bar y) := \bigcap_{i \in \mathcal{I}(\bar y)} U^i(\bar y) \cap \widetilde U(\bar y)$, where $\widetilde U(\bar y)$ is a neighborhood of $\bar y$ such that
$C \cap \widetilde U(\bar y) = \bigcup_{i \in \mathcal{I}(\bar y)} C^i \cap \widetilde U(\bar y)$.
Clearly, the existence of $\widetilde U(\bar y)$ is guaranteed
by the closedness of $C^i$ $(i \notin \mathcal{I}(\bar y))$.

In order to prove (ii), consider sequences $\{y^k\in C\}\to \bar y$ and $\{\lambda^k\in \widehat N_{C}(y^k)\}\to \bar \lambda$.
From \eqref{eq:Frechet Normal Disjunctive}, closedness of $C^i$
and finiteness of $\mathcal{I}(\bar y)$ one easily obtains
the existence of $j \in \mathcal{I}(\bar y)$
and a subsequence $\tilde K \subset \N$ such that
\begin{equation*}\label{eq:Lambda_k}
\lambda^k \in \widehat N_{C^j}(y^k) \quad (k\in \tilde K).
\end{equation*}
The assumption now yields the existence of a subsequence $K \subset \tilde K$ such that
$\ip{\bar \lambda_{\nu}}{y_{\nu}^k - \bar y_{\nu}} = 0$ for $\nu \in I_{\mli}$ and $k\in K$.
\end{proof}
Recall that if $\bar \lambda \notin N_{C^i}(\bar y)$ for some $i \in \mathcal{I}(\bar y)$,
then $C^i$ automatically satisfies \eqref{eq : MainSetPropPQ}.

\begin{proposition} \label{Lem: ProdToAss}
Let $C=\prod_{i = 1}^{r} C_i$ with $C_i \subset \R^{d_i}$
$(i=1,\ldots,r)$ closed and $\bar y = (\bar y_1, \ldots, \bar y_r) \in C$.
\begin{itemize}
\item[(i)] If $C_i$ satisfies \eqref{eq : PolyhMain} at $\bar y_i$
for all $i=1,\ldots,r$, then $C$ satisfies \eqref{eq : PolyhMain} at $\bar y$.
\item[(ii)] If $C_i$ satisfies \eqref{eq : MainSetPropPQ} for multi-index $\mli_i$ with $\vert \mli_i \vert = d_i$, the point $\bar y_i$ and $\bar \lambda_i$ for all $i=1,\ldots,r$, then $C$
satisfies \eqref{eq : MainSetPropPQ} for $\mli=(\mli_1, \ldots, \mli_r)$, $\bar y$ and $\bar \lambda=(\bar \lambda_1, \ldots, \bar \lambda_r)$.
\end{itemize}
\end{proposition}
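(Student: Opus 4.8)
The plan is to prove the two assertions separately, in both cases reducing everything to the corresponding statement on each factor $C_i$ via the product formulas \eqref{NCtoProducts} for $\widehat N_C$, $N_C$ and \eqref{TCtoProducts} for $T_C$. The only preliminary bookkeeping is that the concatenated multi-index $\mli=(\mli_1,\ldots,\mli_r)$ refines the block factorization $\R^d=\R^{d_1}\times\cdots\times\R^{d_r}$, so that $I_{\mli}=\bigcup_{i=1}^r I_{\mli_i}$ and, for $\nu\in I_{\mli_i}$, the component $z_\nu$ of $z=(z_1,\ldots,z_r)\in\R^d$ coincides with the corresponding component of the $i$-th block $z_i$; likewise $\bar\lambda=(\bar\lambda_1,\ldots,\bar\lambda_r)$ and $\bar y=(\bar y_1,\ldots,\bar y_r)$ split along these blocks.

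For (i), I would first invoke \eqref{eq : PolyhMain} for each $C_i$ at $\bar y_i$ to obtain neighborhoods $U_i$ of $\bar y_i$ with $C_i\cap U_i=(\bar y_i+T_{C_i}(\bar y_i))\cap U_i$. Since every $C_i$ satisfies \eqref{eq : PolyhMain}, \eqref{TCtoProducts} applies and yields $T_C(\bar y)=\prod_{i=1}^r T_{C_i}(\bar y_i)$. Taking $U(\bar y):=\prod_{i=1}^r U_i$, which is a neighborhood of $\bar y$, and using that $C$, $\bar y+T_C(\bar y)$ and $U(\bar y)$ are all products over the $r$ blocks, the identity $C\cap U(\bar y)=(\bar y+T_C(\bar y))\cap U(\bar y)$ follows by intersecting the blockwise identities componentwise. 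This part is entirely routine.

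For (ii), take arbitrary sequences $\{y^k\in C\}\to\bar y$ and $\{\lambda^k\in\widehat N_C(y^k)\}\to\bar\lambda$ and write $y^k=(y^k_1,\ldots,y^k_r)$, $\lambda^k=(\lambda^k_1,\ldots,\lambda^k_r)$. By \eqref{NCtoProducts} one has $\lambda^k_i\in\widehat N_{C_i}(y^k_i)$ for each $i$, and obviously $y^k_i\to\bar y_i$ and $\lambda^k_i\to\bar\lambda_i$. Then I would apply \eqref{eq : MainSetPropPQ} for $C_1$ (with $\mli_1$, $\bar y_1$, $\bar\lambda_1$) to extract a subsequence $K_1\subset\N$ along which $\ip{(\bar\lambda_1)_\nu}{(y^k_1)_\nu-(\bar y_1)_\nu}=0$ for all $\nu\in I_{\mli_1}$; next apply \eqref{eq : MainSetPropPQ} for $C_2$ along $K_1$ to obtain $K_2\subset K_1$ handling $I_{\mli_2}$; iterate through $i=r$, which is legitimate since $r<\infty$, to reach $K_r\subset\cdots\subset K_1$. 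Setting $K:=K_r$ and using $I_{\mli}=\bigcup_{i=1}^r I_{\mli_i}$ together with $z_\nu=(z_i)_\nu$ for $\nu\in I_{\mli_i}$, we conclude $\ip{\bar\lambda_{\nu}}{y^k_\nu-\bar y_\nu}=0$ for every $\nu\in I_{\mli}$ and $k\in K$, which is precisely \eqref{eq : MainSetPropPQ} for $C$, $\mli$, $\bar y$, $\bar\lambda$. The structure here parallels part (ii) of Proposition \ref{Lem : UnionToAss}, except that one peels off the hypothesis block by block rather than selecting a single set from a union.

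The \emph{main obstacle} is not analytical but organizational: one must keep the multi-index blocks straight and make sure the iterated passage to subsequences does not spoil equalities already secured for earlier blocks (this is exactly why finiteness of $r$ matters). As a consistency check, if $\bar\lambda\notin N_C(\bar y)=\prod_{i=1}^r N_{C_i}(\bar y_i)$, then no sequences as in \eqref{eq : MainSetPropPQ} exist at all, so the condition holds vacuously, in line with the remark following its definition; the argument above never needs to treat this case separately.
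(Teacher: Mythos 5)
Your proof is correct and follows essentially the same route as the paper: part (i) via $U(\bar y)=\prod_{i=1}^r U_i(\bar y_i)$ together with \eqref{TCtoProducts}, and part (ii) via \eqref{NCtoProducts} followed by the iterated extraction of nested subsequences $K_r\subset\cdots\subset K_1$ over the finitely many blocks. No gaps.
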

\begin{proof}
Denoting by $U_i(\bar y_i)\; (i=1,\ldots,r)$
the neighborhoods given by the assumption in (i),
the first statement follows by simply setting
$U(\bar y) := \prod_{i = 1}^r U_i(\bar y_i)$
and applying \eqref{TCtoProducts}.

In order to prove (ii), consider sequences $\{y^k\in C\}\to \bar y$ and $\{\lambda^k\in \widehat N_{C}(y^k)\}\to \bar \lambda$.
By \eqref{NCtoProducts}, we  have
$\lambda_{i}^k\in \widehat N_{C_{i}}(y_{i}^k)$
for every $i = 1, \ldots, r$ and $k\in \N$.
By assumption, there exists a subsequence
$K_1 \subset \N$ with $\ip{\bar \lambda_{1,\nu_1}}{y_{1,\nu_1}^k - \bar y_{1,\nu_1}} = 0$ $(\nu_1 \in I_{\mli_1}, k\in K_1)$.
Consequently, by assumption, there exists a subsequence $K_2\subset K_1$ such that
$\ip{\bar \lambda_{2,\nu_2}}{y_{2,\nu_2}^k - \bar y_{2,\nu_2}} = 0$ $(\nu_2 \in I_{\mli_2}, k\in K_2)$. Repeating this argument another  $r-2$ times, we find that there exists a subsequence $K(=K_r)$ such that $\ip{\bar \lambda_{i,\nu_i}}{y_{i,\nu_i}^k - \bar y_{i,\nu_i}} = 0$
$(\nu_i \in I_{\mli_i}, k\in K)$ for all $i=1,\ldots,r$. This proves the statement.
\end{proof}

We conclude this subsection by showing that  the program \eqref{eq:GMP}, with $\Gamma$ satisfying properties
\eqref{eq : PolyhMain} and \eqref{eq : MainSetPropPQ}, automatically satisfies the crucial Assumption \ref{Ass}, and  in addition,  that directional PQ-normality is equivalent to its simplified counterpart in this case. We point out that this result is the very foundation for all remaining results of the paper.

\begin{proposition}\label{Pro:EquivDir}
Let $\bar x$ be feasible for \eqref{eq:GMP} with $\Gamma$ closed and
satisfying \eqref{eq : PolyhMain} at $\bar y=F(\xb)$
as well as \eqref{eq : MainSetPropPQ} for some multi-index $\mli$,
the point $\bar y=F(\xb)$ and every multiplier
$\bar \lambda \in N_\Gamma(F(\xb))$.
Then Assumption \ref{Ass} for $\mli$ is fulfilled at $\xb$ and, moreover,
(directional) PQ-normality w.r.t. $\mli$ at $\xb$ is equivalent to
its simplified form \eqref{eq : MPQNdef} from Theorem \ref{The:ModCQ} (Theorem \ref{The:DirModCQ}).
\end{proposition}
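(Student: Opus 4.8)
The assertion splits into two parts, of which the first is essentially a bookkeeping matter. Given any $\bar\lambda\in\Lambda^0(\xb)\subset N_\Gamma(F(\xb))$ and any $\{(y^k,\lambda^k)\in\Gamma\times\R^d\}\to(F(\xb),\bar\lambda)$ with $\lambda^k\in\widehat N_\Gamma(y^k)$, condition \eqref{eq : MainSetPropPQ} for $\mli$, the point $\bar y=F(\xb)$ and $\bar\lambda$ hands us a subsequence $K$ along which $\ip{\bar\lambda_\nu}{y_\nu^k-F_\nu(\xb)}=0$ for all $\nu\in I_\mli\supseteq I_\mli(\bar\lambda)$, which is even stronger than \eqref{eq:SuffCond2}; hence Assumption \ref{Ass} for $\mli$ holds at $\xb$. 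Because of this, the non-directional equivalence of PQ-normality w.r.t. $\mli$ with its simplified form \eqref{eq : MPQNdef} is exactly Theorem \ref{The:ModCQ}, and in the directional setting Theorem \ref{The:DirModCQ} already gives one implication, namely that the simplified form implies directional PQ-normality w.r.t. $\mli$. So the only genuinely new thing to establish is the reverse directional implication, and I would prove it by contraposition, using \eqref{eq : PolyhMain} to bridge the gap that Example \ref{Ex:DirVerDiff} shows is real once \eqref{eq : PolyhMain} is dropped.

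So I would assume the simplified form from Theorem \ref{The:DirModCQ} is violated for some direction $u$ with $\norm u=1$: there are $0\neq\bar\lambda\in\Lambda^0(\xb;u)$ and $\{x^k\}\to\xb$ with $(x^k-\xb)/\norm{x^k-\xb}\to u$ and $\ip{\bar\lambda_\nu}{F_\nu(x^k)-F_\nu(\xb)}>0$ for all $\nu\in I_\mli(\bar\lambda)$ and all $k$. From $\bar\lambda\in N_\Gamma(F(\xb);\nabla F(\xb)u)$ I would extract witnessing sequences $\{t_k\}\downarrow0$, $\{w^k\}\to\nabla F(\xb)u$, $\{\mu^k\}\to\bar\lambda$ with $\mu^k\in\widehat N_\Gamma(F(\xb)+t_kw^k)$, noting that $F(\xb)+t_kw^k\in\Gamma$ by the convention on $\widehat N_\Gamma$. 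Applying \eqref{eq : MainSetPropPQ} to $\{F(\xb)+t_kw^k\}$ and $\{\mu^k\}$ and relabelling along the resulting subsequence, I may further assume $\ip{\bar\lambda_\nu}{w_\nu^k}=0$ for all $\nu\in I_\mli$ and all $k$, while retaining $w^k\to\nabla F(\xb)u$ and $\mu^k\to\bar\lambda$.

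The heart of the proof is then the construction of triples $(x^k,y^k,\lambda^k)$ witnessing the failure of directional PQ-normality, and here \eqref{eq : PolyhMain} is used in an essential way. Near $F(\xb)$ the set $\Gamma$ coincides with the translated cone $F(\xb)+T_\Gamma(F(\xb))$; since $F(\xb)+t_kw^k\in\Gamma$ this forces $w^k\in T_\Gamma(F(\xb))$, and because the tangent cone, hence the regular normal cone, of a cone at a point of it is unchanged under positive rescaling of that point, combined with the locality of tangent and normal cones, one obtains that both $F(\xb)+s\,w^k\in\Gamma$ and $\widehat N_\Gamma(F(\xb)+s\,w^k)\ni\mu^k$ hold for every sufficiently small $s>0$. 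Taking $s:=\norm{x^k-\xb}$ (admissible once $k$ is large) I would set $y^k:=F(\xb)+\norm{x^k-\xb}\,w^k$ and $\lambda^k:=\mu^k$. Then $(x^k,y^k,\lambda^k)\to(\xb,F(\xb),\bar\lambda)$, $\lambda^k\in\widehat N_\Gamma(y^k)$ and $(y^k-F(\xb))/\norm{x^k-\xb}=w^k\to\nabla F(\xb)u$, so \eqref{eqn : DirPN1} holds; and since $\ip{\bar\lambda_\nu}{w_\nu^k}=0$ for $\nu\in I_\mli$, one finds $\ip{\bar\lambda_\nu}{F_\nu(x^k)-y_\nu^k}=\ip{\bar\lambda_\nu}{F_\nu(x^k)-F_\nu(\xb)}>0$ for all $\nu\in I_\mli(\bar\lambda)$, which is \eqref{eqn : DirPQN2}. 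Hence directional PQ-normality w.r.t. $\mli$ in direction $u$ fails, completing the contraposition.

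The main obstacle is that the sequence $\{x^k\}$ (delivered by the violation of the simplified condition) and the sequence $\{t_k\}$ (coming from the directional normal cone) carry a priori unrelated rates of convergence, so one cannot simply reuse the $y^k$ produced by the directional normal cone. The remedy is the observation, flowing from \eqref{eq : PolyhMain}, that $\widehat N_\Gamma$ is constant along the open segment $\{F(\xb)+s\,w^k:\ s>0\ \text{small}\}$, which lets one rescale freely to the rate $\norm{x^k-\xb}$; complementarily, \eqref{eq : MainSetPropPQ} is precisely what annihilates the cross term $\ip{\bar\lambda_\nu}{w_\nu^k}$, so that the strict inequality survives the replacement of $F(\xb)$ by $y^k$. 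The remaining points are routine bookkeeping: all passages to subsequences are harmless because violating a constraint qualification only requires a single certifying sequence, and the membership $F(\xb)+t_kw^k\in\Gamma$ follows automatically from $\mu^k\in\widehat N_\Gamma(F(\xb)+t_kw^k)$.
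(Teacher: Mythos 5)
Your proposal is correct and follows essentially the same route as the paper's proof: reduce everything to the reverse directional implication, argue by contraposition, use \eqref{eq : PolyhMain} and the conic rescaling invariance of $\widehat N_\Gamma$ to replace the scale $t_k$ from the directional normal cone by $\norm{x^k-\xb}$ in defining $y^k:=F(\xb)+\norm{x^k-\xb}w^k$, and use \eqref{eq : MainSetPropPQ} to kill the term $\ip{\bar\lambda_\nu}{y^k_\nu-F_\nu(\xb)}$. The only (immaterial) difference is that you apply \eqref{eq : MainSetPropPQ} to the sequence $F(\xb)+t_kw^k$ before rescaling, whereas the paper applies it directly to the rescaled sequence $y^k$.
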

\begin{proof}
Assumption \ref{Ass} for $\mli$ at $\bar x\in \cX $ follows from \eqref{eq : MainSetPropPQ} for  $\Gamma$  with $\mli$ at $\bar y=F(\bar x)\in \Gamma$.
Hence, the statement for the nondirectional version follows from Theorem \ref{The:ModCQ}.
Similarly, the implication from the directional simplified form to
directional PQ-normality follows from Theorem \ref{The:DirModCQ}.

It remains to show that PQ-normality w.r.t. $\mli$ in direction $u$
implies its simplified form. We do this by contraposition, so let us assume that there exists $\bar \lambda \in \Lambda^0(\xb;u)\setminus \{0\}$
and $\{x^k\} \to \bar x$ such that $(x^k - \xb)/\norm{x^k - \xb} \to u$ and
\[
\ip{\bar \lambda_{\nu}}{F_{\nu}(x^k)-F_{\nu}(\xb)} >0 \ \text{ for } \  \nu \in I_{\mli}(\bar\lambda), \ (k\in\N).
\]
By the definition of the directional normal cone, there exists $\{t_k\} \downarrow 0$ and $\{w^k\} \to \nabla F(\xb)u$ as well as
$\{\lambda^k \in \widehat N_{\Gamma}(F(\xb) + t_k w^k)\} \to \bar \lambda$.
Taking into account \eqref{eq : PolyhMain} together with
\cite[Exercise 6.44]{RoW 98} we obtain
\begin{eqnarray*}
\lambda^k & \in & \widehat N_{\Gamma}(F(\xb) + t_k w^k) = \widehat N_{F(\xb) + T_\Gamma(F(\xb))}(F(\xb) + t_k w^k) \subset \widehat N_{T_\Gamma(F(\xb))}(t_k w^k) \\
&= & \widehat N_{T_\Gamma(F(\xb))}(\alpha w^k) = \widehat N_{\Gamma -  F(\xb)}(F(\xb) + \alpha w^k -F(\xb))
\subset \widehat N_{\Gamma}(F(\xb) + \alpha w^k)
\end{eqnarray*}
for any $\alpha > 0$ sufficiently small.
Hence by setting $y^k := F(\xb) + \norm{x^k - \xb} w^k$ we conclude
$\lambda^k \in \widehat N_{\Gamma}(y^k)$.
Moreover, \eqref{eq : MainSetPropPQ} for $\mli$ yields that,
by passing to a subsequence if necessary, we may take $y^k$ such that
$\ip{\bar \lambda_{\nu}}{y_{\nu}^k - F_{\nu}(\bar x)} = 0$, for all $\nu \in I_{\mli}$ and $k \in \N$.
Consequently, we obtain
\[\ip{\bar \lambda_{\nu}}{F_{\nu}(x^k)-y_{\nu}^k} =
\ip{\bar \lambda_{\nu}}{F_{\nu}(x^k)-F_{\nu}(\xb)} >0.\]
Finally, $(y^k - F(\xb))/\norm{x^k - \xb} = w^k \to \nabla F(\xb) u$,
showing the violation of PQ-normality w.r.t. $\mli$ in direction $u$
and the proof is complete.
\end{proof}

\subsection{Pseudo-normality for disjunctive programs}

The desired results for the disjunctive setting
 \eqref{eq:Disjunctive Sets} can be viewed as a  corollary of our analysis in Section \ref{sec:Poly}.
Indeed, Lemma \ref{lem:PolyProp} and Proposition \ref{Lem : UnionToAss} yield that a disjunctive set $\Gamma$ satisfies properties \eqref{eq : PolyhMain}
and \eqref{eq : MainSetPropPQ} for the multi-index $\mli^P := d$.
In particular, due to \eqref{eq : PolyhMain},
the endeavor of computing the normal cone to disjunctive $\Gamma$ at some point can be reduced to computing the
normal cone to a union of finitely many polyhedral cones at zero, i.e.,
\[
N_{\Gamma}(\bar y)=N_{\bigcup_{\ell=1}^N T_{\Gamma^{\ell}}(\bar y)}(0) =
N_{T_{\Gamma}(\bar y)}(0),
\]
see \cite[p.~59]{HeO 08}.
More importantly, the following corollary is a consequence of Proposition \ref{Pro:EquivDir}.

\begin{corollary}\label{cor:Disj Sets}
Let $\Gamma$ be disjunctive in the sense of \eqref{eq:Disjunctive Sets}.
Then $\Gamma$ satisfies \eqref{eq : PolyhMain} at every point $\bar y \in \Gamma$
as well as \eqref{eq : MainSetPropPQ} for the multi-index $\mli^P := d$ at every point $\bar y$ and every $\bar \lambda$. In particular,
Assumption \ref{Ass} for $\mli^P$ is fulfilled at every feasible point $\xb$
for disjunctive programs.
Moreover, (directional) pseudo-normality at $\xb$ is equivalent to
its simplified form:
(for any $u \in \R^n$ with $\norm{u}=1$) there is no nonzero $\bar \lambda \in \Lambda^0(\xb)$ ($\bar \lambda \in \Lambda^0(\xb;u)$)
such that there exists a sequence $\{x^k\} \to \bar x$
(with $(x^k - \xb)/\norm{x^k - \xb} \to u$) fulfilling
\begin{equation} \label{eq : PNdef}
\ip{\bar \lambda}{F(x^k) - F(\bar x)} > 0\ 
\ (k\in\N).
\end{equation}
\end{corollary}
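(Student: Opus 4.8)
The statement of Corollary~\ref{cor:Disj Sets} has two parts: first, that a disjunctive $\Gamma$ satisfies the abstract properties \eqref{eq : PolyhMain} and \eqref{eq : MainSetPropPQ} for $\mli^P$ (hence Assumption~\ref{Ass}), and second, that (directional) pseudo-normality coincides with its simplified form. For the first part I would simply invoke the building blocks. A disjunctive set is $\Gamma = \bigcup_{\ell=1}^N \Gamma^\ell$ with each $\Gamma^\ell$ convex polyhedral. Each convex polyhedron satisfies \eqref{eq : PolyhMain} at every point by the exactness of tangent approximation \eqref{eq:PolyPropConv}, and it satisfies \eqref{eq : MainSetPropPQ} for $\mli^P$ at every point and every $\bar\lambda$ by Lemma~\ref{lem:PolyProp}(ii): if $\{y^k \in \Gamma^\ell\} \to \bar y$ and $\{\lambda^k \in \widehat N_{\Gamma^\ell}(y^k)\} \to \bar\lambda$, then along a subsequence $\ip{\bar\lambda}{y^k - \bar y} = 0$, which is exactly \eqref{eq : MainSetPropPQ} since for $\mli^P$ there is a single block $\nu$ with $\bar y_\nu = \bar y$ and $\bar\lambda_\nu = \bar\lambda$. (If $\bar\lambda \notin N_{\Gamma^\ell}(\bar y)$, the condition holds vacuously.) Then Proposition~\ref{Lem : UnionToAss}(i) propagates \eqref{eq : PolyhMain} to the union, and Proposition~\ref{Lem : UnionToAss}(ii) propagates \eqref{eq : MainSetPropPQ} to the union, using that these properties for the individual $\Gamma^\ell$ are assumed only for $\ell \in \mathcal{I}(\bar y)$ but they in fact hold for all $\ell$.

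**For the second part, I would apply Proposition~\ref{Pro:EquivDir} directly.** Since $\Gamma$ satisfies \eqref{eq : PolyhMain} at $\bar y = F(\xb)$ and \eqref{eq : MainSetPropPQ} for $\mli^P$, the point $F(\xb)$, and every $\bar\lambda \in N_\Gamma(F(\xb))$, Proposition~\ref{Pro:EquivDir} immediately gives that Assumption~\ref{Ass} for $\mli^P$ holds at $\xb$ and that (directional) PQ-normality w.r.t.~$\mli^P$ is equivalent to its simplified form. It only remains to observe that PQ-normality w.r.t.~$\mli^P$ is exactly pseudo-normality (by definition, since $\mli^P$ is the maximal/trivial multi-index with a single block), and that for $\mli^P$ the simplified condition \eqref{eq : MPQNdef} reads $\ip{\bar\lambda}{F(x^k) - F(\xb)} > 0$, i.e.~precisely \eqref{eq : PNdef}. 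The directional statement follows the same way: PQ-normality w.r.t.~$\mli^P$ in direction $u$ with simplified form \eqref{eq : MPQNdef} plus $(x^k - \xb)/\norm{x^k - \xb} \to u$ is directional pseudo-normality in the sense of Theorem~\ref{The:DirModCQ}.

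**I do not anticipate a genuine obstacle here — this is a bookkeeping corollary** whose whole content was front-loaded into the lemmas and propositions of Section~\ref{sec:Poly}. The only point requiring mild care is the translation of notation: one must check that specializing the multi-index formalism to $\mli^P = d \in \N^1$ collapses the block structure so that $I_{\mli^P}$ is a singleton, $z_\nu = z$, $\bar\lambda_\nu = \bar\lambda$, and $F_\nu = F$; once this identification is made, \eqref{eq : MainSetPropPQ} becomes the scalar condition $\ip{\bar\lambda}{y^k - \bar y} = 0$ that Lemma~\ref{lem:PolyProp}(ii) supplies, and \eqref{eq : MPQNdef} becomes \eqref{eq : PNdef}. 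The auxiliary remark that $\widetilde\Gamma$ in \eqref{eq : MPsDCs} is often not uniquely decomposable into polyhedra is irrelevant to the proof: \eqref{eq : PolyhMain} and \eqref{eq : MainSetPropPQ} are properties of the set $\Gamma$ itself, not of a chosen decomposition, and they hold for \emph{any} valid decomposition. I would therefore write the proof as: (1) cite \eqref{eq:PolyPropConv} and Lemma~\ref{lem:PolyProp}(ii) to get the properties for a single polyhedron; (2) cite Proposition~\ref{Lem : UnionToAss} to pass to the union; (3) cite Proposition~\ref{Pro:EquivDir} and unwind the $\mli^P$ notation to conclude.
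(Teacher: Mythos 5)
Your proposal is correct and follows exactly the route the paper takes: the paper derives the corollary by noting that each $\Gamma^\ell$ satisfies \eqref{eq : PolyhMain} via the exactness of tangent approximation \eqref{eq:PolyPropConv} and \eqref{eq : MainSetPropPQ} for $\mli^P$ via Lemma \ref{lem:PolyProp}(ii), passing to the union by Proposition \ref{Lem : UnionToAss}, and concluding by Proposition \ref{Pro:EquivDir}. Your unwinding of the $\mli^P$ notation and the remark about non-uniqueness of the polyhedral decomposition are both accurate and consistent with the paper.
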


We emphasize that Corollary \ref{cor:Disj Sets} clarifies that the various
definitions of pseudo-normality used in the literature stem from the same concept.
In the general setting \eqref{eq:GMP}, pseudo-normality
contains the additional sequence $\{y^k\}$,
but in the special cases of disjunctive programs it reduces to the simplified version without $\{y^k\}$.

Corollary \ref{cor:Disj Sets} also allows us to use all the sufficient
conditions for pseudo-normality, hence also for MSCQ, studied in Section 3.
These conditions now take on simpler forms
since the vector optimization techniques reduce to
standard optimization in the disjunctive setting. This can be seen from
\eqref{eq : PNdef}, which yields that
pseudo-normality of $\xb$ is equivalent
to $\xb$ being a local maximizer of $\ip{\bar \lambda}{F(x)}$ for
all $\bar \lambda \in \Lambda^0(\xb)$, cf. Theorem \ref{Pro : PQNormVSMax}.
In particular, the second-order sufficient conditions from Corollary \ref{Cor : PQNormSC}
and Proposition \ref{Cor: SOSCdirPQN} read as follows.

\begin{corollary}\label{Cor:SOSCPN}
Let $\bar x$ be feasible for \eqref{eq:GMP} with $\Gamma$ disjunctive and $F$ twice differentiable at $\xb$.
Consider the following two conditions:
\begin{itemize}
 \item[(i)] second-order sufficient condition for pseudo-normality (SOSCPN):
 For every $0 \neq \bar \lambda \in \Lambda^0(\xb)$
 and every $0 \neq u \in \R^n$ one has
 \begin{equation} \label{eq:SOSCPN}
u^T \nabla^2 \ip{\bar \lambda}{F}(\xb) u < 0;
\end{equation}
 \item[(ii)] second-order sufficient condition for directional pseudo-normality (SOSCdirPN):
 For every $u \in \R^n$ with $\norm{u}=1$
 and every $0 \neq \bar \lambda \in \Lambda^0(\xb;u)$ one has \eqref{eq:SOSCPN}.
\end{itemize}
Then condition (i) (condition (ii)) implies (directional) pseudo-normality at $\xb$.
In particular, either of the two conditions implies MSCQ at $\bx$.
\end{corollary}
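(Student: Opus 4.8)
The plan is to derive both claims as direct specializations of the second-order sufficient conditions of Section \ref{sec:NewCQ} to the choice $\mli = \mli^P := d$. The enabling fact is Corollary \ref{cor:Disj Sets}: for a disjunctive $\Gamma$, Assumption \ref{Ass} for $\mli^P$ holds at every feasible point, so that Corollary \ref{Cor : PQNormSC} and Proposition \ref{Cor: SOSCdirPQN} are applicable with $\mli = \mli^P$ and already yield (directional) pseudo-normality, hence MSCQ at $\xb$ (cf. Theorem \ref{The : MainDQNtoMS}, Proposition \ref{The : CQsToMS}). It then only remains to verify that, for this particular multi-index, SOSCPQN($\mli^P$) and SOSCdirPQN($\mli^P$) reduce to the conditions (i) and (ii) stated in the corollary.

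The bookkeeping is the following. Since $\mli^P$ has a single (vectorial) component, the minimum over $\nu$ in \eqref{eq : SSOSCMPQN} degenerates to the single term $\ip{\bar\lambda}{\nabla F(\xb)w} + u^T\nabla^2\ip{\bar\lambda}{F}(\xb)u$, and $I_{\mli^P}(\bar\lambda)$ is nonempty precisely when $\bar\lambda \neq 0$. Moreover, for $\bar\lambda \in \Lambda^0(\xb)$ --- and a fortiori for $\bar\lambda \in \Lambda^0(\xb;u) \subset \Lambda^0(\xb)$ --- one has $\nabla F(\xb)^T\bar\lambda = 0$, whence $\ip{\bar\lambda}{\nabla F(\xb)z} = \ip{\nabla F(\xb)^T\bar\lambda}{z} = 0$ for every $z \in \R^n$. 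Consequently the side condition $\ip{\bar\lambda_\nu}{\nabla F_\nu(\xb)u} = 0$ appearing in Corollary \ref{Cor : PQNormSC} and Proposition \ref{Cor: SOSCdirPQN} is automatically satisfied and imposes no restriction on $u$, while the term $\ip{\bar\lambda}{\nabla F(\xb)w}$ in \eqref{eq : SSOSCMPQN} vanishes for every $w$. Hence \eqref{eq : SSOSCMPQN} becomes $u^T\nabla^2\ip{\bar\lambda}{F}(\xb)u < 0$, i.e.\ exactly \eqref{eq:SOSCPN}: SOSCPQN($\mli^P$) is literally condition (i) and SOSCdirPQN($\mli^P$) is literally condition (ii). This proves the corollary.

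Alternatively, and perhaps more transparently, I would argue directly through Corollary \ref{cor:Disj Sets}, which identifies (directional) pseudo-normality at $\xb$ with $\xb$ being a (directional) local maximizer of $x \mapsto \ip{\bar\lambda}{F(x)}$ for every $\bar\lambda \in \Lambda^0(\xb)$, resp.\ every $\bar\lambda \in \Lambda^0(\xb;u)$. Since $\nabla F(\xb)^T\bar\lambda = 0$ furnishes first-order stationarity automatically, \eqref{eq:SOSCPN} is precisely the classical second-order sufficient condition for a strict local maximizer of the $C^2$ function $\ip{\bar\lambda}{F}$; in the directional case one feeds the sequences from the simplified form \eqref{eq : PNdef} into a second-order Taylor expansion of $\ip{\bar\lambda}{F}$ at $\xb$ along $u$ and divides by $\norm{x^k - \xb}^2$, obtaining $u^T\nabla^2\ip{\bar\lambda}{F}(\xb)u \geq 0$, which contradicts SOSCdirPN. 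I do not expect a genuine obstacle here: the only point needing care is checking that the gradient terms indeed cancel and that the minimum in \eqref{eq : SSOSCMPQN} trivializes for $\mli^P$, after which everything is a direct corollary of the earlier results.
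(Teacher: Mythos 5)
Your proposal is correct and follows essentially the same route as the paper, which states this corollary as the direct specialization of Corollary \ref{Cor : PQNormSC} and Proposition \ref{Cor: SOSCdirPQN} to $\mli=\mli^P$, with Assumption \ref{Ass} supplied by Corollary \ref{cor:Disj Sets} and the gradient terms in \eqref{eq : SSOSCMPQN} vanishing because $\nabla F(\xb)^T\bar\lambda=0$. Your alternative argument via the maximality characterization \eqref{eq : PNdef} is also exactly the viewpoint the paper emphasizes in the surrounding discussion, so there is nothing to add.
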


Clearly, an affine $F$ can never fulfill the strict inequality of SOSCPN.
The required maximality of $\xb$
expressed in \eqref{eq : PNdef} can be secured nonetheless.

\begin{corollary} \label{Cor : MPN_Flin}
 Let $\bar x $ be feasible for \eqref{eq:GMP} with $\Gamma$ disjunctive.
 If $F$ is affine then pseudo-normality, and consequently also MSCQ, holds at $\bx$.
\end{corollary}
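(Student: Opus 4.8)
The plan is to invoke Corollary \ref{cor:Disj Sets}, which for disjunctive $\Gamma$ reduces (non-directional) pseudo-normality at $\bx$ to its simplified form, and then to observe that the defining inequality \eqref{eq : PNdef} can never hold once $F$ is affine. So I would argue by contraposition: assume pseudo-normality fails at $\bx$, and derive a contradiction.

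By Corollary \ref{cor:Disj Sets}, failure of pseudo-normality at $\bx$ yields a nonzero $\bar\lambda \in \Lambda^0(\xb)$ together with a sequence $\{x^k\} \to \bx$ such that $\ip{\bar\lambda}{F(x^k) - F(\bx)} > 0$ for all $k\in\N$. Now use that $F$ is affine: its Jacobian is constant, so $F(x) - F(\bx) = \nabla F(\bx)(x - \bx)$ for every $x\in\R^n$. Consequently,
\[
\ip{\bar\lambda}{F(x^k) - F(\bx)} = \ip{\bar\lambda}{\nabla F(\bx)(x^k - \bx)} = \ip{\nabla F(\bx)^T\bar\lambda}{x^k - \bx} \quad (k\in\N).
\]
But $\bar\lambda \in \Lambda^0(\xb) = \ker\nabla F(\xb)^T \cap N_\Gamma(F(\xb))$, hence $\nabla F(\bx)^T\bar\lambda = 0$ and the right-hand side vanishes for every $k$, contradicting $\ip{\bar\lambda}{F(x^k) - F(\bx)} > 0$. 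Therefore no such $\bar\lambda$ and sequence exist, i.e.\ pseudo-normality holds at $\bx$. Finally, MSCQ at $\bx$ follows from Theorem \ref{The : MainDQNtoMS} (pseudo-normality being PQ-normality w.r.t.\ $\mli^P$), or equivalently from Proposition \ref{The : CQsToMS}(i).

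I do not anticipate any real obstacle here; the argument is essentially one line once the reduction in Corollary \ref{cor:Disj Sets} is in place, the crucial point being the elementary orthogonality $\rge \nabla F(\bx) \perp \ker \nabla F(\bx)^T$. I note that MSCQ alone could also be obtained directly from Robinson's result, Proposition \ref{The : CQsToMS}(iv), but that route would not deliver the stronger conclusion that pseudo-normality itself holds, which is what distinguishes this corollary.
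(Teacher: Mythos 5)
Your argument is correct and is essentially the paper's own proof: both rely on the reduction to the simplified form \eqref{eq : PNdef} via Corollary \ref{cor:Disj Sets} and then observe that, for affine $F$, $\ip{\bar\lambda}{F(x)-F(\xb)}=\ip{\nabla F(\xb)^T\bar\lambda}{x-\xb}=0$ for every $x$, so the strict inequality can never hold. The only cosmetic difference is that you phrase this as a contraposition while the paper states it as $\xb$ being a (trivial) local maximizer of $\ip{\bar\lambda}{F}$.
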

\begin{proof} For $F$ affine we have $F(x) = F(\bar x) + \nabla F(\bar x)(x - \bar x)$ for all $x\in \R^n$.
Hence, taking into account $\bar\lambda \in \Lambda^0(\bar x)$ we find that
\[
\ip{\bar \lambda}{F(x)-F(\bar x)}=\ip{\nabla F(\bar x)^T \bar \lambda}{x-\bar x}=0,
\]
showing that $\xb$ is a local maximizer of $\ip{\bar \lambda}{F}$ and pseudo-normality thus follows.
\end{proof}

We point out  that the sufficiency of SOSCdirPN for MSCQ established in Corollary \ref{Cor:SOSCPN} corresponds
to the sufficiency of Gfrerer's SOSCMS for MSCQ (Proposition  \ref{The : CQsToMS} (iii)).
In turn, Corollary \ref{Cor : MPN_Flin} corresponds to Robinson's result (Proposition \ref{The : CQsToMS} (iv)).
Hence, by employing the notion of (directional) pseudo-normality and its sufficiency for MSCQ, we found new proofs for these interesting results.
Moreover, the notion of directional quasi-normality unifies all sufficient conditions for MSCQ from Proposition \ref{The : CQsToMS}.

Note that the analogous results were obtained also in
\cite[Theorem 4.1., Proposition 4.2.]{BYZ19}.
What was not noticed there, however,
is the underlying maximality principle \eqref{eq : PNdef},
which provides a nice understanding and makes things much simpler.
In particular, it enables us to extend the above results by means of higher-order analysis.

\subsection{Higher-order conditions}\label{sec:Higher}

In order to proceed, we rely once more on the notion of multi-indices. First, we introduce the following standard notation:
Given $\alpha=(\alpha_1,\ldots,\alpha_n) \in \N^n$ and $x=(x_1,\ldots,x_n) \in \R^n$
we set
\[\vert \alpha \vert := \alpha_1 + \ldots + \alpha_n, \quad \alpha! := \alpha_1! \ldots \alpha_n!, \quad
x^{\alpha} = x_1^{\alpha_1} \ldots x_n^{\alpha_n}.\]
Given a function $g:\R^n \to \R$, $m$-times differentiable at $\xb$,
and $\alpha \in \N^n$ with $\vert \alpha \vert \leq m$ we set
\[D^{\alpha} g (\xb)=\frac{\partial^{\vert \alpha \vert} g (\xb)}{\partial x_1^{\alpha_1} \ldots \partial x_n^{\alpha_n}}.\]

\begin{corollary} \label{Cor:MTHorder}
 Let $\bar x$ be feasible for a disjunctive program with $F$  $m$-times differentiable at $\xb$.
 Consider the following two conditions:
 \begin{itemize}
 \item[(i)] for every $0 \neq \bar \lambda \in \Lambda^0(\xb)$,
$1\leq q < m$, $w \in \R^n$ and all $0\neq u \in \R^n$ one has
\begin{equation} \label{eq : NthOrderSCPN}
 \sum_{\vert \alpha \vert = q} \frac{D^{\alpha} \ip{\bar \lambda}{F}(\xb)}{\alpha!} w^{\alpha} \leq 0
 \quad \textrm{ and } \quad
 \sum_{\vert \alpha \vert = m} \frac{D^{\alpha} \ip{\bar \lambda}{F}(\xb)}{\alpha!} u^{\alpha} < 0;
\end{equation}
 \item[(ii)] for every $u \in \R^n$ with $\norm{u}=1$, $0 \neq \bar \lambda \in \Lambda^0(\xb;u)$,
$1\leq q < m$ and all $w \in \mathcal{U}$,
where $\mathcal{U}$ denotes a neighbourhood of $u$,
one has \eqref{eq : NthOrderSCPN}.
\end{itemize}
Then condition (i) (condition (ii)) implies (directional) pseudo-normality at $\xb$.
In particular, either of the two conditions implies MSCQ at $\bx$.
\end{corollary}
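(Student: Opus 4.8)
The plan is to trace everything back to Corollary~\ref{cor:Disj Sets}, which for a disjunctive program identifies (directional) pseudo-normality at $\xb$ with its \emph{simplified form}: for every $0\neq\bar\lambda\in\Lambda^0(\xb)$ (respectively, for each unit vector $u$ and every $0\neq\bar\lambda\in\Lambda^0(\xb;u)$) there is no sequence $\{x^k\}\to\xb$ (respectively, one with $(x^k-\xb)/\norm{x^k-\xb}\to u$) satisfying $\ip{\bar\lambda}{F(x^k)-F(\xb)}>0$ for all $k$. Equivalently, writing $g:=\ip{\bar\lambda}{F}$, one has to show that $\xb$ is a local maximizer of $g$ in the appropriate (non-directional/directional) sense. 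Thus the whole statement reduces to a higher-order sufficient condition for a single scalar smooth function, which is precisely the content encoded by \eqref{eq : NthOrderSCPN}.

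First I would fix a nonzero $\bar\lambda\in\Lambda^0(\xb)$ (respectively $\bar\lambda\in\Lambda^0(\xb;u)$), put $g:=\ip{\bar\lambda}{F}$, and record the homogeneous Taylor parts $P_q(h):=\sum_{\vert\alpha\vert=q}\frac{D^{\alpha}g(\xb)}{\alpha!}\,h^{\alpha}$, which are polynomials homogeneous of degree $q$ in $h$. Since $F$ is $m$-times differentiable at $\xb$, the Taylor expansion of $g$ yields $g(\xb+h)-g(\xb)=\sum_{q=1}^{m}P_q(h)+o(\norm{h}^{m})$ as $h\to 0$. Condition \eqref{eq : NthOrderSCPN} says exactly that $P_q\le 0$ for $q=1,\dots,m-1$---on all of $\R^n$ in case~(i), on a neighbourhood $\mathcal{U}$ of $u$ in case~(ii)---together with $P_m(h)<0$ for all $h\neq 0$ in case~(i), resp.\ $P_m(u)<0$ in case~(ii).

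In case~(i), homogeneity and continuity of $P_m$ together with compactness of the unit sphere give a constant $c>0$ with $P_m(h)\le -c\norm{h}^{m}$ for all $h\in\R^n$; combining this with $P_q\le 0$ for $q<m$ and the remainder estimate, $g(\xb+h)-g(\xb)\le -c\norm{h}^{m}+o(\norm{h}^{m})<0$ for all sufficiently small $h\neq 0$, so $\xb$ is a strict local maximizer of $g$ and the simplified form of pseudo-normality holds. In case~(ii), I would argue by contraposition: suppose $\{x^k\}\to\xb$ with $t_k:=\norm{x^k-\xb}\downarrow 0$, $d^k:=(x^k-\xb)/t_k\to u$ and $g(x^k)>g(\xb)$. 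For $k$ large, $d^k\in\mathcal{U}$, hence $P_q(d^k)\le 0$ for $q<m$, while $P_m(d^k)\to P_m(u)<0$ by continuity. Inserting $h=t_kd^k$ into the expansion and using $P_q(t_kd^k)=t_k^qP_q(d^k)$ and $\norm{h}=t_k$ gives
\[
g(x^k)-g(\xb)=\sum_{q=1}^{m-1}t_k^qP_q(d^k)+t_k^mP_m(d^k)+o(t_k^m)\le t_k^m\bigl(P_m(d^k)+o(1)\bigr)<0
\]
for all large $k$, a contradiction. In both cases Corollary~\ref{cor:Disj Sets} then yields (directional) pseudo-normality at $\xb$, and the final assertion follows since (directional) pseudo-normality implies MSCQ, by Proposition~\ref{The : CQsToMS}(i) and Theorem~\ref{The : MainDQNtoMS}.

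The argument is largely routine; the one spot needing a little care is the directional step, where one must make sure the remainder along $x^k=\xb+t_kd^k$ is genuinely $o(t_k^m)$ (it is, since $\norm{x^k-\xb}=t_k$) and that the lower-order terms $t_k^qP_q(d^k)$, although of larger order in $t_k$ than the decisive term $t_k^mP_m(d^k)$, carry the right (nonpositive) sign so they may simply be dropped. A secondary point is that \eqref{eq : NthOrderSCPN} is imposed only on multipliers in $\Lambda^0(\xb)$ (respectively $\Lambda^0(\xb;u)$), which is exactly the set Corollary~\ref{cor:Disj Sets} forces us to consider. I expect these---rather mild---bookkeeping issues to be the main obstacle.
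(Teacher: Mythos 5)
Your proposal is correct and follows essentially the same route as the paper: reduce to the simplified maximality form via Corollary \ref{cor:Disj Sets}, Taylor-expand $\ip{\bar\lambda}{F}$ at $\xb$, drop the nonpositive homogeneous terms of order $q<m$ using the first inequality in \eqref{eq : NthOrderSCPN}, and conclude from the strict negativity of the $m$-th order term (via compactness of the unit sphere in case (i), and via $d^k\to u$ with the neighbourhood $\mathcal{U}$ in case (ii)). Your write-up is in fact somewhat more explicit than the paper's one-display proof, but there is no substantive difference.
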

\begin{proof}
 Both statements follows from the same arguments,
 namely, given $0 \neq \bar \lambda \in \Lambda^0(\xb)$ and $1\leq q < m$ and
 setting $u_k:=(x^k - \xb)/\norm{x^k - \xb}$, Taylor expansion
 together with \eqref{eq : NthOrderSCPN} yield
 \begin{eqnarray*}
  \ip{\bar \lambda}{F(x^k)-F(\xb)} &=&
  \sum_{1 \leq \vert \alpha \vert \leq m} \frac{D^{\alpha} \ip{\bar \lambda}{F}(\xb)}{\alpha!} (x^k - \xb)^{\alpha} + o(\norm{x^k - \xb}^m)\\
  & \leq & \norm{x^k - \xb}^m \Big( \sum_{\vert \alpha \vert = m} \frac{D^{\alpha} \ip{\bar \lambda}{F}(\xb)}{\alpha!} u_k^{\alpha} + o(1) \Big) < 0.
 \end{eqnarray*}
\end{proof}

Similarly as in the case of affine $F$, the strict inequality of the above higher-order sufficient conditions does not have to be fulfilled, as long as $F$ has polynomial structure,
i.e., for every $i=1,\ldots,d,$ and every $x,$ we have
\begin{equation}\label{eq:Polyn}
F_i(x) = \sum_{\vert \alpha \vert \leq m} c_{i,\alpha} x^{\alpha}
\end{equation}
for some $m \in \N$, denoting the degree of $F$, and $c_{i,\alpha} \in \R$.
We point out that one actually has
$c_{i,\alpha} = D^{\alpha} F_i(0)/\alpha!$
and \eqref{eq:Polyn} can be equivalently rewritten as
\begin{equation}\label{eq:Polyn2}
F_i(x) = \sum_{\vert \alpha \vert \leq m} \frac{D^{\alpha} F_i(\xb)}{\alpha!} (x - \xb)^{\alpha}
\end{equation}
for arbitrary $\bar x \in \R^n$.
\begin{corollary} \label{Cor:polyn}
 Let $\bar x$ be feasible for a disjunctive program with $F$ being polynomial of degree $m$,
 i.e., given by \eqref{eq:Polyn}.
 Consider the following two conditions:
 \begin{itemize}
 \item[(i)] for every $0 \neq \bar \lambda \in \Lambda^0(\xb)$,
 $1 \leq q \leq m,$ and for all $w \in \R^n$ one has
 \begin{equation} \label{eq : PolynSCPN}
  \sum_{\vert \alpha \vert = q} \frac{D^{\alpha} \ip{\bar \lambda}{F}(\xb)}{\alpha!} w^{\alpha} \leq 0;
 \end{equation}
 \item[(ii)] for every $u \in \R^n$ with $\norm{u}=1$, $0 \neq \bar \lambda \in \Lambda^0(\xb;u)$,
$1\leq q < m$ and all $w \in \mathcal{U}$,
where $\mathcal{U}$ denotes a neighbourhood of $u$,
one has \eqref{eq : PolynSCPN}.
\end{itemize}
Then condition (i) (condition (ii)) implies (directional) pseudo-normality at $\xb$.
In particular, either of the two conditions implies MSCQ at $\bx$.
\end{corollary}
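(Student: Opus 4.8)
The plan is to imitate the proof of Corollary~\ref{Cor:MTHorder}, but to replace the Taylor estimate with remainder by an \emph{exact} polynomial expansion, which is available because $\ip{\bar\lambda}{F}$ is a polynomial of degree at most~$m$. Fix a nonzero multiplier $\bar\lambda$ --- taken in $\Lambda^0(\xb)$ for part~(i) and in $\Lambda^0(\xb;u)$ for part~(ii) --- and, applying \eqref{eq:Polyn2} to each component of $F$, record that for all $x\in\R^n$
\[
\ip{\bar\lambda}{F(x)}-\ip{\bar\lambda}{F(\xb)}=\sum_{q=1}^{m}P_q(x-\xb),\qquad
P_q(w):=\sum_{\vert\alpha\vert=q}\frac{D^{\alpha}\ip{\bar\lambda}{F}(\xb)}{\alpha!}\,w^{\alpha},
\]
where each $P_q$ is homogeneous of degree~$q$ and, in contrast to Corollary~\ref{Cor:MTHorder}, there is no $o(\norm{x-\xb}^{m})$ remainder. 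By Corollary~\ref{cor:Disj Sets} it then suffices to rule out, for the chosen $\bar\lambda$, a sequence $\{x^k\}\to\xb$ --- with $(x^k-\xb)/\norm{x^k-\xb}\to u$ in the directional case --- along which $\ip{\bar\lambda}{F(x^k)-F(\xb)}>0$; metric subregularity then follows from Theorem~\ref{The : MainDQNtoMS} (see also Proposition~\ref{The : CQsToMS}(i)).

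For part~(i), condition \eqref{eq : PolynSCPN} says precisely that $P_q(w)\le 0$ for every $w\in\R^n$ and every $q\in\{1,\dots,m\}$. Substituting $w=x-\xb$ and summing over $q$ yields $\ip{\bar\lambda}{F(x)}\le\ip{\bar\lambda}{F(\xb)}$ for all $x\in\R^n$, so $\xb$ is a global --- hence local --- maximizer of $\ip{\bar\lambda}{F}$; as $\bar\lambda\in\Lambda^0(\xb)$ was arbitrary, Corollary~\ref{cor:Disj Sets} gives pseudo-normality at~$\xb$.

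For part~(ii), I would argue by contraposition. Suppose directional pseudo-normality fails in a direction $u$ with $\norm{u}=1$; then there are $0\neq\bar\lambda\in\Lambda^0(\xb;u)$ and $\{x^k\}\to\xb$ with $t_k:=\norm{x^k-\xb}\downarrow 0$, $u_k:=(x^k-\xb)/t_k\to u$ and $\ip{\bar\lambda}{F(x^k)-F(\xb)}>0$ for all~$k$, whence $0<\sum_{q=1}^{m}t_k^{q}P_q(u_k)$ by the exact expansion. Since $u_k\to u$ and $\mathcal{U}$ is a neighbourhood of~$u$, we have $u_k\in\mathcal{U}$ for all large~$k$, so \eqref{eq : PolynSCPN} forces $P_q(u_k)\le 0$ for every order $q$ up to~$m$, making the right-hand side nonpositive --- a contradiction; hence directional pseudo-normality holds at~$\xb$. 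The one place where the reasoning truly departs from the sketch of Corollary~\ref{Cor:MTHorder}, and the main point to watch, is the top-order term $t_k^{m}P_m(u_k)$: it is now present (no remainder having been discarded) and must be absorbed by the non-strict bound $P_m\le 0$ on $\mathcal{U}$ rather than by the strict inequality at~$u$ used in \eqref{eq : NthOrderSCPN}, so the order index in \eqref{eq : PolynSCPN} has to be understood to include $q=m$ in the directional statement as well. Apart from this bookkeeping the argument is routine.
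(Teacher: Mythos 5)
Your argument is correct and coincides with the paper's own proof: both rest on the exact expansion $\ip{\bar\lambda}{F(x)-F(\xb)}=\sum_{1\le\vert\alpha\vert\le m}\frac{D^{\alpha}\ip{\bar\lambda}{F}(\xb)}{\alpha!}(x-\xb)^{\alpha}$ with no remainder, followed by termwise nonpositivity of each homogeneous piece (using homogeneity and the fact that $u_k=(x^k-\xb)/\norm{x^k-\xb}$ eventually lies in $\mathcal{U}$ in the directional case), combined with Corollary \ref{cor:Disj Sets}. Your remark about the top-order term is also consistent with the paper: its proof explicitly invokes \eqref{eq : PolynSCPN} for all $1\le q\le m$ in \emph{both} statements, so the range $1\le q<m$ printed in condition (ii) should indeed be read as $1\le q\le m$, exactly as you note.
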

\begin{proof}
 Denoting $c_{\alpha}:=(c_{1,\alpha}, \ldots, c_{d,\alpha})$ and taking into account \eqref{eq:Polyn2},
 for any $\bar \lambda \neq 0$, one has
\[\ip{\bar \lambda}{F(x)} = \sum_{\vert \alpha \vert \leq m} \ip{\bar \lambda}{c_{\alpha}} x^{\alpha} = \sum_{\vert \alpha \vert \leq m} \frac{D^{\alpha} \ip{\bar \lambda}{F}(\xb)}{\alpha!} (x - \xb)^{\alpha}\]
 for every $x$. Hence, given $0 \neq \bar \lambda \in \Lambda^0(\xb)$
 and $1\leq q \leq m$, both statements follows from \eqref{eq : PolynSCPN} since
 \begin{equation*}
  \ip{\bar \lambda}{F(x)-F(\xb)} =
  \sum_{1 \leq \vert \alpha \vert \leq m} \frac{D^{\alpha} \ip{\bar \lambda}{F}(\xb)}{\alpha!} (x - \xb)^{\alpha} \leq 0.
 \end{equation*}
\end{proof}

Of course the above higher-order conditions are sufficient for pseudo-normality
and MSCQ also for general programs \eqref{eq:GMP} fulfilling Assumption \ref{Ass} for $\mli^P$.

\subsection{Summary and example for the disjunctive case}

For the sake of completeness, we summarize the sufficient conditions
for pseudo-normality and MSCQ in the disjunctive setting in the following theorem.

\begin{theorem}[Sufficient conditions for pseudo-normality and MSCQ]\label{cor:Ex Pen PN}
Consider \eqref{eq:GMP} with $\Gamma$ disjunctive in the sense of \eqref{eq:Disjunctive Sets} and a feasible point $\bar x$.
Then any of the conditions from
Corollaries \ref{cor:Disj Sets}, \ref{Cor:SOSCPN}, \ref{Cor : MPN_Flin}, \ref{Cor:MTHorder} and \ref{Cor:polyn}
implies (directional) pseudo-normality and MSCQ at $\bar x$.
\end{theorem}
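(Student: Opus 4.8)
The plan is to treat this statement purely as a bookkeeping corollary that assembles the results already proven in this section, so the ``proof'' will consist of verifying that each of the cited conditions is indeed applicable in the disjunctive setting and then invoking the chain of implications established earlier. First I would recall Corollary~\ref{cor:Disj Sets}: for $\Gamma$ disjunctive in the sense of \eqref{eq:Disjunctive Sets}, property \eqref{eq : PolyhMain} holds at every $\bar y\in\Gamma$ and property \eqref{eq : MainSetPropPQ} holds for the multi-index $\mli^P:=d$ at every $\bar y$ and every $\bar\lambda$. By Proposition~\ref{Pro:EquivDir} (equivalently, by the second part of Corollary~\ref{cor:Disj Sets}) this yields that Assumption~\ref{Ass} for $\mli^P$ is automatically satisfied at every feasible point $\bar x$, and moreover that (directional) pseudo-normality at $\bar x$ is equivalent to its simplified form \eqref{eq : PNdef}. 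This is the structural fact that unlocks all the remaining implications.

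Next I would run through the list. Each of the sufficient conditions in Corollaries~\ref{Cor:SOSCPN}, \ref{Cor : MPN_Flin}, \ref{Cor:MTHorder}, and \ref{Cor:polyn} was stated and proven precisely in the disjunctive setting (or, equivalently, under Assumption~\ref{Ass} for $\mli^P$, which we have just observed holds automatically here). Hence each of them already delivers (directional) pseudo-normality at $\bar x$ as its conclusion; there is nothing further to check. Likewise Corollary~\ref{cor:Disj Sets} itself provides a characterization of (directional) pseudo-normality via \eqref{eq : PNdef}, so ``any of the conditions from Corollary~\ref{cor:Disj Sets}'' is to be read as: whenever the simplified criterion \eqref{eq : PNdef} is verified, pseudo-normality follows. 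In all cases the implication ``(directional) pseudo-normality $\Rightarrow$ MSCQ'' is then supplied by Theorem~\ref{The : MainDQNtoMS} (directional PQ-normality w.r.t.\ $\mli^P$, i.e.\ directional pseudo-normality, implies MSCQ), or in the nondirectional case already by Proposition~\ref{The : CQsToMS}(i). Concatenating these gives the asserted conclusion for every listed condition.

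Since the statement is a summary, I do not anticipate a genuine obstacle; the only thing to be careful about is consistency of hypotheses. In particular one must note the differentiability assumptions differ across the corollaries ($F$ twice differentiable for Corollary~\ref{Cor:SOSCPN}, $m$-times differentiable for Corollary~\ref{Cor:MTHorder}, affine for Corollary~\ref{Cor : MPN_Flin}, polynomial of degree $m$ for Corollary~\ref{Cor:polyn}), so the theorem is to be understood as: under the smoothness hypothesis appropriate to the condition being invoked, that condition implies (directional) pseudo-normality and hence MSCQ. I would make this explicit in one sentence and then conclude that the theorem follows immediately by combining Corollaries~\ref{cor:Disj Sets}--\ref{Cor:polyn} with Theorem~\ref{The : MainDQNtoMS}. $\qquad\square$
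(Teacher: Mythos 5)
Your proposal is correct and matches the paper's intent exactly: the paper states this theorem as a pure summary with no separate proof, relying—just as you do—on Corollary \ref{cor:Disj Sets} to secure Assumption \ref{Ass} for $\mli^P$ and the simplified form \eqref{eq : PNdef}, on Corollaries \ref{Cor:SOSCPN}--\ref{Cor:polyn} for the individual implications, and on Theorem \ref{The : MainDQNtoMS} for the passage to MSCQ. Your added remark about the differing smoothness hypotheses across the cited corollaries is a sensible clarification but does not change the argument.
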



\noindent
The following parametric example  demonstrates the usefulness of our conditions based on pseudo-normality.  

\begin{example}\label{ExampleUltimate}
  Let $\Gamma \subset \R^3$ be given by $\Gamma:=\R \times \{y \in \R^2 \mv y_2 \leq -\vert y_1 \vert\}$, $F: \R^2 \to \R^3$ defined by $F(x) := (x_1,x_2,ax_1^2 + bx_1^4 + cx_2^2 + dx_2^4)^T$
for some parameters $a,b,c,d \in \R$ and let $\xb := (0,0)$.
Clearly,
\[\nabla F(\xb)=\left(\begin{array}{ccc}
                              1 & 0 & 0\\
                              0 & 1 & 0
                              \end{array}\right)^T, \quad
\nabla^2 \ip{\lambda}{F}(\xb)=2\lambda_3\left(\begin{array}{cc}
                              a & 0\\
                              0 & c
                              \end{array}\right),
\]
and for any $\lambda=(\lambda_1,\lambda_2,\lambda_3) \in \Lambda^0(\xb) = \Lambda^0(\xb;(\pm1,0)^T) = \R_+ (0,0,1)^T$.
Note also that $\Lambda^0(\xb;u)=\emptyset$ for all directions
$u \neq (\pm1,0)^T$ with $\norm{u} = 1$ since $T_{\Gamma}(F(\xb)) = \Gamma$
and $\nabla F(\xb) u = (u_1,u_2,0)^T$. Moreover, observe that
$
\mathcal X=F^{-1}(\Gamma)=\set{x\in\R^2}{ax_1^2+bx_1^4+cx_2^2+dx_2^4\leq -|x_2|}.
$

The most crucial parameter is $a$. Indeed,
if $a>0$, then locally around $\bar x=0$, the set $ \mathcal X$ is the singleton $\{0\}$, and thus    it  can be seen  that sequence $\{x_k := (1/k,0)^T\}$ shows violation of MSCQ.
On the other hand, if $a<0$, MSCQ holds and can be verified
by SOSCMS.
Hence, suppose now that $a=0$.

Next, let us look into parameter $b$.
If $b>0$, $\{x_k := (1/k,0)^T\}$ again shows violation of MSCQ
regardless of other parameters.
Note that if $b \leq 0 < c$,
the sequence $\{\tilde x_k := (1/k^2,1/k^3)^T\}$
satisfies $\tilde x_k/\norm{\tilde x_k} \to (1,0)^T=:\bar u$, but
for $\bar \lambda := (0,0,1)^T \in \Lambda^0(\xb;\bar u)$ we get
\[
\ip{\bar \lambda}{F(\tilde x^k) - F(\bar x)}
= b/(k^8) + c/(k^6) + d/(k^{12}) > 0
\]
for sufficiently large $k$, showing violation
of pseudo-normality in direction $\bar u$, hence  we cannot use any of
the  stronger  conditions to verify MSCQ.
Clearly, a similar problem occurs if $b=c=0<d$. 

We conjecture  that MSCQ holds in this case,
but as the direct proof  appears fairly technical
and since for our purposes it is more interesting to see
the limitations of sufficient conditions in this case
(rather than determine if MSCQ holds),
we skip the details  for $b<0$
and only prove MSCQ in the simpler case $b=0$ below.

Let us mention, however,
that if $b < 0 \geq c$, we may use the directional
version of the fourth-order sufficient condition based on 
Corollary \ref{Cor:MTHorder} (ii) to verify MSCQ, even if $d>0$.

Next, we prove that MSCQ holds if $b=0$,
regardless of parameters $c$ and $d$.
Since the feasible set $\cX$, locally around $\bar x=(0,0)$, equals $\R \times \{0\}$,
we get $\dist_\cX(x)=\vert x_2 \vert$ for any $x \in \R^2$ close enough to $\bar x$.
On the other hand,
for $y,\tilde y \in \R^3$ with $y_2 = \tilde y_2$
and $y_3 \leq \tilde y_3$ we clearly have
$\dist_{\Gamma}(y) \leq \dist_{\Gamma}(\tilde y)$.
Given $\varepsilon \in (0,1) $, let $x$ be sufficiently
close to $0$ so that
$- \varepsilon \vert x_2 \vert \leq c x_2^2 + d x_2^4$.
One computes that
\[\dist_{\Gamma}(x_1,x_2,- \varepsilon \vert x_2 \vert)
= \frac{1 - \varepsilon}{\sqrt{2}} \vert x_2 \vert.\]
Thus, setting $\kappa := \frac{\sqrt{2}}{1 - \varepsilon}$ yields
\[\dist_\cX(x)=\vert x_2 \vert
= \kappa \dist_{\Gamma}(x_1,x_2,- \varepsilon \vert x_2 \vert)
\leq \kappa \dist_{\Gamma}(F(x))\]
for all $x\in \R^2$ close enough to $\bar x$, 
and hence MSCQ follows.

In order to better illustrate  the results of this paper,
in the following tables corresponding to $a = 0 > b$ and $a=0=b$,
respectively, we provide  sufficient conditions
ensuring MSCQ for given parameters. Recall from above that, for   $c>0$ and $b\leq 0$,
pseudo-normality-based  conditions are not applicable, and thus we restrict ourselves to the case 
$c\leq 0$.
As mentioned above, the case $a = 0 > b$ can be handled
by the directional fourth-order sufficient condition
while for the case $a = 0 = b$ we provided the direct proof.
We point out, however, that in both cases, unless $c=0<d$,
 one can use also other sufficient conditions as indicated in the table.
In particular, one can see the meaning of parameter $d$,
which does not seem to influence the validity of MSCQ, but it influences which sufficient conditions
can be invoked to verify it.
Note also that if $a < 0$, depending on other parameters,
conditions other that SOSCMS can be used as well.
Nevertheless, the only condition that can never be used
in case $a=0$ is SOSCPN, which is applicable if $a,c < 0$.
Hence, we further detail only the case $a=0$.

\begin{table}[h]
\centering
\begin{subtable}{.5\textwidth}
\centering
\begin{tabular}{|c|c|c|c|}
\hline
 & $c=0$ & $c<0$ \\
\hline
$d=0$ & Polyn. $4^{th}$-OSC & Polyn. $4^{th}$-OSC \\
\hline
$d>0$ & Dir. $4^{th}$-OSC & Pseudo-normality \\
\hline
$d<0$ & $4^{th}$-OSC & $4^{th}$-OSC \\
\hline
\end{tabular}
\caption{$a=0>b$}
\end{subtable}%
\begin{subtable}{.5\textwidth}
\centering
\begin{tabular}{|c|c|c|c|}
\hline
 & $c=0$ & $c<0$ \\
\hline
$d=0$ & Robinson SC & Polyn. $2^{nd}$-OSC \\
\hline
$d>0$ & Def. & Pseudo-normality \\
\hline
$d<0$ & Polyn. $4^{th}$-OSC & Polyn. $4^{th}$-OSC \\
\hline
\end{tabular}
\caption{$a=0=b$}
\end{subtable}
\caption{
\emph{Polyn.} $4^{th}$-\emph{OSC} and \emph{Polyn.} $2^{nd}$-\emph{OSC}
refer to the sufficient condition for polynomial $F$
of fourth- and second-order, respectively
(Corollary \ref{Cor:polyn} (i)),
(\emph{Dir.}) $4^{th}$-\emph{OSC} stands for (the directional version of) the fourth-order sufficient condition based on Corollary \ref{Cor:MTHorder},
\emph{Robinson SC} refers to Robinson's result (Proposition \ref{The : CQsToMS} (iv)),
and \emph{Def.} means the direct proof from definition.\\
We excluded the case $c > 0$ since validity of MSCQ either remains undetermined
($b < 0$) or was proven directly ($b=0$).
}
\end{table}

To illustrate the difference between directional
and non-directional approach, observe that
the mildest non-directional sufficient condition for MSCQ,
pseudo-normality, characterized by the maximality condition
\eqref{eq : PNdef}, is satisfied if and only if
$a,c \leq 0$ and $a < 0$ provided $b > 0$
and $c < 0$ if $d > 0$.
On the other hand, on top of the above situations,
directional pseudo-normality can be applied whenever $a < 0$
or also in case $a = 0 > b$ and $c \leq 0$.

The power of our new sufficient conditions is nicely
demonstrated for $a=0$, when Gfrerer's SOSCMS can never be used.
Similarly, Robinson's result can not be applied
unless all the parameters are zero.
\end{example}

\section{Disjunctive programs with product structures}\label{sec:DisjProd}

The simplified form of quasi-normality is not sufficient for metric subregularity even in case the set $\Gamma$
under consideration is a general convex polyhedral set, see Example \ref{ex:MSviolUnderMQN}.
On the other hand, we realize that the set $\widetilde\Gamma$ in all cases \eqref{eq : MPsDCs} (a)-(e)
is a union of {\em products} of closed intervals.
This additional product structure motivates our study of {\em ortho-disjunctive programs} in Section \ref{sec:OrthoDisj},
which enables us to recover and extend several known quasi-normality results for MPCCs and MPVCs
and obtain new corresponding results for MPSCs, MPrCCs and MPrPCs.

In order to clarify the role of product structures in a broader context, consider first an instance of GMP \eqref{eq:GMP}, where
\begin{equation}\label{eq:GamProd}
\Gamma=\prod_{\nu \in I_{\mli}} \Gamma_\nu, \quad \Gamma_\nu = \bigcup_{\ell=1}^{N_\nu} \Gamma_\nu^\ell, \quad \Gamma_\nu^\ell\; \text{convex polyhedral},
\end{equation}
for some multi-index $\mli \in \N^l$ with $l \in \{1,\ldots,d\}$
and $\vert \mli \vert = d$,
i.e., $\Gamma$ is the Cartesian product of disjunctive sets.
Note that all the prototypical disjunctive programs from \eqref{eq : MPsDCs} (a)-(e) exhibit such ``outer'' product structure.

We emphasize that $\Gamma$ given by \eqref{eq:GamProd} is still a disjunctive set in the sense of \eqref{eq:Disjunctive Sets}.
Indeed, denoting $\mathcal{J}:=\prod_{\nu \in I_{\mli}} \{1,\ldots,N_\nu\}$, for $\vec{\boldsymbol{\ell}} \in \mathcal{J}$
the set $\Gamma^{\vec{\boldsymbol{\ell}}} := \prod_{\nu \in I_{\mli}} \Gamma_\nu^{\ell_\nu}$ is convex
polyhedral and $\Gamma= \bigcup_{\vec{\boldsymbol{\ell}} \in \mathcal{J}} \Gamma^{\vec{\boldsymbol{\ell}}}.$
Regardless, it turns out to be advantageous to exploit the underlying product structure of $\Gamma$ rather than just treating $\Gamma$ as a disjunctive set.
One of the reasons is that we deal with the unions of only $N_{\nu}$ sets,
which is typically a small number ($N_\nu = 2$ for all $\nu$ in all cases \eqref{eq : MPsDCs} (a)-(e)),
instead of dealing with the union of $\vert \mathcal{J} \vert = \prod_{\nu \in I_{\mli}} N_\nu$ sets.
We point out that the newly developed concept of $\mathcal{Q}$-stationarity from \cite{BeGfr16b,BeGfr16d} takes advantage of this observation.

On the basis of Propositions \ref{Lem : UnionToAss} and \ref{Lem: ProdToAss} we readily infer
that, on top of property \eqref{eq : PolyhMain},
$\Gamma$ given by \eqref{eq:GamProd} satisfies also \eqref{eq : MainSetPropPQ} {\em for multi-index $\mli$}.
Proposition \ref{Pro:EquivDir} thus yields that in this case the (directional) PQ-normality w.r.t. $\mli$
coincides with its simplified form.
In particular, standard NLPs, where $\Gamma=\{0\}^r \times \R_{-}^{d-r}$ for some $r \leq d$,
fit into \eqref{eq:GamProd} with the multi-index $\mli^{Q}:=(1,\ldots,1) \in \N^d$
and hence we can readily handle  quasi-normality for NLPs with ease.

Utilizing the ``outer'' product structure on its own, however,
does not enable one to analyze the quasi-normality for programs from \eqref{eq : MPsDCs} (a)-(e),
where the factors $\Gamma_\nu = \widetilde\Gamma$ are two-dimensional.
To overcome this, consider the GMP \eqref{eq:GMP} with the ``inner'' product structure, i.e., where
\begin{equation} \label{eq:GamProd2}
 \Gamma = \bigcup_{\ell=1}^{N} \Gamma^{\ell}, \ \Gamma^{\ell}=\prod_{\mu \in I_{\mli}} \Gamma_{\mu}^{\ell}, \ \Gamma_{\mu}^{\ell} \; \text{convex polyhedral},
\end{equation}
for some multi-index $\mli \in \N^l$.

By the same arguments as before, $\Gamma$ again satisfies \eqref{eq : MainSetPropPQ} for $\mli$ and PQ-normality w.r.t. $\mli$ attains the simplified form.
Moreover, the choice of multi-index $\mli^{Q}:=(1,\ldots,1) \in \N^d$ now offers richer setting.

\subsection{Ortho-disjunctive constraints and quasi-normality}\label{sec:OrthoDisj}

Motivated by the above discussion, we now introduce the new subclass of disjunctive programs
containing the ``inner'' product structure with {\em one-dimensional} factors.
To this end, consider the mathematical program of the form
\begin{equation}\label{eq:Ortho Disj 1}
\min\limits_{x \in \mathbb{R}^n} \, f(x) \quad  \st \quad F(x) \in \Gamma=\bigcup_{\ell=1}^{N} \Gamma^{\ell},\
\Gamma^{\ell}=\prod_{i \in I} [a_{i}^{\ell},b_{i}^{\ell}],
\end{equation}
where $I=\{1,\ldots,d\}$, $a_{i}^{\ell},b_{i}^{\ell} \in \R$ with
$a_{i}^{\ell} \leq b_{i}^{\ell}$ and we also allow symbols $a_{i}^{\ell} = -\infty$ and $b_{i}^{\ell} = +\infty$
to include unbounded intervals. Note that we do not work with extended real numbers, i.e., given $a \in \R$, $[a,\infty]$ stands for
$\{x \in \R \mv x\geq a\}$.
This simply means that $\Gamma^{\ell}$ is a product of closed convex subsets of $\R$, i.e., closed intervals.
We refer to such sets $\Gamma$ as {\em ortho-disjunctive} and to such programs as
{\em mathematical programs with ortho-disjunctive constraints} or briefly {\em ortho-disjunctive programs}.

Naturally, one can combine the ``inner'' and ``outer'' products and consider the Cartesian product of ortho-disjunctive sets,
a setting that indeed fits the problem class   \eqref{eq : MPsDCs} best.
As before, it can be easily shown that such sets are still ortho-disjunctive.
Moreover, only the ``inner'' products are important for our remaining analysis.
Hence, we proceed without the ``outer'' product, which is also more consistent with the notion of disjunctive sets.

On the basis of Propositions \ref{Lem : UnionToAss}, \ref{Lem: ProdToAss} and \ref{Pro:EquivDir}
we obtain the following analogon of Corollary \ref{cor:Disj Sets}.
\begin{corollary} \label{Pro : QusinormEquiv}
Set $\Gamma$ given by \eqref{eq:Ortho Disj 1} satisfies \eqref{eq : PolyhMain} at every point $\bar y \in \Gamma$
as well as \eqref{eq : MainSetPropPQ} for multi-index $\mli^Q := (1,\ldots,1)\in \N^d$ at every $\bar y$ and every $\bar \lambda$.
In particular, for ortho-disjunctive program \eqref{eq:Ortho Disj 1}, Assumption \ref{Ass} for $\mli^Q$ is fulfilled at every feasible point $\bar x$ and, moreover,
the (directional) quasi-normality at $\xb$ is equivalent to its simplified form:
(for any $u \in \R^n\setminus\{0\}$) there is no nonzero $\bar \lambda \in \Lambda^0(\xb)$ ($\bar \lambda \in \Lambda^0(\xb;u)$)
such that there exists a sequence $\{x^k\} \to \bar x$
(with $(x^k - \xb)/\norm{x^k - \xb} \to u$) fulfilling
\begin{equation} \label{eq : QNsimplified}
\bar \lambda_i \left( F_i(x^k) - F_i(\bar x) \right) > 0 \quad \ 
(i \in I(\bar\lambda) := I_{\mli^Q}(\bar\lambda) = \{i \in I \mv \bar\lambda_i \neq 0\}, \, k\in\N).
\end{equation}
\end{corollary}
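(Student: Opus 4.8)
The plan is to obtain the statement as a direct assembly of the structural results of Section~\ref{sec:Poly}, with no new analytic content. The starting observation is that every factor $[a_i^{\ell},b_i^{\ell}] \subset \R$ appearing in \eqref{eq:Ortho Disj 1} is a convex polyhedron (a bounded closed interval, a closed half-line, or all of $\R$), hence in particular closed and convex polyhedral. Consequently, the exactness of tangent approximation \eqref{eq:PolyPropConv} shows that each such factor satisfies \eqref{eq : PolyhMain} at every one of its points, while Lemma~\ref{lem:PolyProp}(ii) shows that it satisfies \eqref{eq : MainSetPropPQ} for the trivial multi-index $1 \in \N^1$ at every point $\bar y_i$ and every $\bar \lambda_i$; recall also that \eqref{eq : MainSetPropPQ} holds vacuously whenever $\bar \lambda_i \notin N_{[a_i^{\ell},b_i^{\ell}]}(\bar y_i)$, so only genuine normal directions need to be considered.

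Next I would lift these two properties from the factors to the products $\Gamma^{\ell}=\prod_{i \in I}[a_i^{\ell},b_i^{\ell}]$ via Proposition~\ref{Lem: ProdToAss}: part (i) gives \eqref{eq : PolyhMain} for $\Gamma^{\ell}$ at every $\bar y \in \Gamma^{\ell}$, and part (ii), applied with the multi-index $\mli_i = 1 \in \N^1$ for each $i \in I$, gives \eqref{eq : MainSetPropPQ} for $\Gamma^{\ell}$ with the concatenated multi-index $(\mli_1,\ldots,\mli_d) = \mli^Q = (1,\ldots,1) \in \N^d$, at every point and every multiplier. Then I would pass from the finite family $\{\Gamma^{\ell}\}_{\ell=1}^{N}$ to $\Gamma = \bigcup_{\ell=1}^{N}\Gamma^{\ell}$ via Proposition~\ref{Lem : UnionToAss}: part (i) yields \eqref{eq : PolyhMain} for $\Gamma$ at every $\bar y \in \Gamma$, and part (ii), noting that it suffices to verify the hypothesis for the indices $\ell \in \mathcal{I}(\bar y)$, yields \eqref{eq : MainSetPropPQ} for $\Gamma$ with $\mli^Q$ at every $\bar y$ and every $\bar \lambda$.

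With these two properties of $\Gamma$ in hand, the remaining claims follow from Proposition~\ref{Pro:EquivDir} applied with the multi-index $\mli = \mli^Q$: for any $\xb$ feasible for \eqref{eq:Ortho Disj 1} we have $\bar y = F(\xb) \in \Gamma$, so \eqref{eq : PolyhMain} holds at $\bar y$ and \eqref{eq : MainSetPropPQ} holds for $\mli^Q$, the point $\bar y$, and every multiplier $\bar \lambda \in N_{\Gamma}(F(\xb))$; hence Assumption~\ref{Ass} for $\mli^Q$ is satisfied at $\xb$, and (directional) PQ-normality w.r.t.\ $\mli^Q$ at $\xb$ is equivalent to its simplified form from Theorem~\ref{The:ModCQ} (resp.\ Theorem~\ref{The:DirModCQ}). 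Since PQ-normality w.r.t.\ $\mli^Q$ is by definition exactly (directional) quasi-normality, and since for $\mli^Q$ the vector components $\bar\lambda_{\nu}$ in \eqref{eq : MPQNdef} are the scalars $\bar\lambda_i$ with $I_{\mli^Q}(\bar\lambda) = I(\bar\lambda)$, the inner-product condition \eqref{eq : MPQNdef} specializes to the componentwise condition \eqref{eq : QNsimplified}, which finishes the identification.

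The main obstacle, if there is one at all, is purely bookkeeping: one must be careful that the factors are allowed to be unbounded (half-lines or $\R$) yet are still convex polyhedral so that both \eqref{eq:PolyPropConv} and Lemma~\ref{lem:PolyProp}(ii) genuinely apply, and one must track the multi-index concatenation $(\mli_1,\ldots,\mli_d)$ correctly through Proposition~\ref{Lem: ProdToAss} so that it lands on $\mli^Q$ rather than on some coarser factorization of $\R^d$. Everything else is a mechanical invocation of the propositions of Section~\ref{sec:Poly}.
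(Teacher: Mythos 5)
Your proposal is correct and follows exactly the route the paper intends: the paper gives no separate proof for this corollary but states that it follows "on the basis of Propositions \ref{Lem : UnionToAss}, \ref{Lem: ProdToAss} and \ref{Pro:EquivDir}," which is precisely the assembly you carry out (factors are convex polyhedral, hence satisfy \eqref{eq : PolyhMain} and, via Lemma \ref{lem:PolyProp}(ii), \eqref{eq : MainSetPropPQ}; lift to the products $\Gamma^{\ell}$, then to the union $\Gamma$, then invoke Proposition \ref{Pro:EquivDir} with $\mli=\mli^Q$). Your bookkeeping of the multi-index concatenation and the specialization of \eqref{eq : MPQNdef} to \eqref{eq : QNsimplified} is also consistent with the paper.
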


Just as in the case of pseudo-normality, cf. the comments after Corollary \ref{cor:Disj Sets},
we have now clarified that, in fact, there is only one concept of quasi-normality
which, in general, contains the additional sequence $\{y^k\}$,
but in special cases, such as NLPs or MPCCs, simplifies to the known versions without $\{y^k\}$.
Moreover, the above corollary provides the definition
of quasi-normality for all other ortho-disjunctive programs.

Before we state the main result of this subsection
that parallels Theorem \ref{cor:Ex Pen PN} for pseudo-normality,
we write down explicitly the conditions from
Theorem \ref{Pro : PQNormVSMax}, Proposition \ref{Cor: SOSCdirPQN}
and Corollary \ref{Cor : PQNormSC} for multi-index $\mli^Q$ corresponding to quasi-normality.

Given $\lambda = (\lambda_i)_{i \in I}$,
$\varphi^{\lambda}$ from \eqref{eq:varphi} reads as
\begin{equation} \label{eq:MultiobjProbQN}
 \varphi^{\lambda}(x) =
\left(\lambda_i F_i(x) \right)_{i \in I(\lambda)}.
\end{equation}
Moreover, assuming that $F$ is twice differentiable at $\xb$,
the second-order sufficient conditions from Corollary \ref{Cor : PQNormSC}
and Proposition \ref{Cor: SOSCdirPQN},
respectively, read as follows:
\begin{itemize}
 \item {\em Second-order sufficient condition for quasi-normality (SOSCQN):}
 For every $0 \neq \bar\lambda \in \Lambda^0(\xb)$, every $0 \neq u \in \R^n$ with $\nabla F_i(\xb) u = 0$ for all $i \in I(\bar \lambda)$ and every $w \in \R^n$ with $\ip{w}{u} = 0$ one has
\begin{equation} \label{eq : SSOSCMQN}
\min_{i \in I(\bar \lambda)} \left( \bar\lambda_i \nabla F_i(\xb) w + u^T \nabla^2 (\bar\lambda_i F_i)(\xb) u \right) < 0;
\end{equation}
\item {\em Second-order sufficient condition for directional quasi-normality (SOSCdirQN):}
 For every $u\in \R^n$ with $\|u\|=1$,
 every $\bar\lambda \in \Lambda^0(\xb;u)$ with
 $\nabla F_i(\xb) u = 0$ for all $i \in I(\bar \lambda)$
 and every $w$ with $\ip{w}{u} = 0$ one has \eqref{eq : SSOSCMQN}.
\end{itemize}
Moreover, for a closed interval $[a,b]$ and $c \in \R$ we have
\[\dist_{[a,b]}(c) = (c-a)^- + (c-b)^+,\]
where $(q)^-:= - \min\{q,0\}$ and $(q)^+:= \max\{q,0\}$
denotes the negative and the positive part of any number $q \in \R$, respectively, extended to symbols $\pm \infty$ by the natural convention $(\infty)^- = (-\infty)^+ = 0$.
Thus, depending on which norm we consider for the products, the penalty function now reads as
\begin{eqnarray}\label{eq:PenaltyFuncOrthoDisj}
P_\alpha & = & f+ \alpha \min_{\ell=1,\ldots,N}
\dist_{\Gamma^{\ell}}\circ F \\ \nonumber
& =& \begin{cases}
      f+ \alpha \min_{\ell=1,\ldots,N}
      \sum_{i \in I} \left((F_i(\cdot) - a_{i}^{\ell})^- + (F_i(\cdot) - b_{i}^{\ell})^+ \right) & (l_1\textrm{-norm}),\\
      f+ \alpha \min_{\ell=1,\ldots,N}
      \max_{i \in I} \left((F_i(\cdot) - a_{i}^{\ell})^- + (F_i(\cdot) - b_{i}^{\ell})^+ \right) & (l_{\infty}\textrm{-norm})
     \end{cases}\quad (\alpha>0).
\end{eqnarray}

\begin{theorem}[Sufficient conditions for quasi-normality and MSCQ]\label{cor:Ex Pen QN}
Consider an ortho-disjunctive program \eqref{eq:Ortho Disj 1} and a feasible point $\bar x$.
Then each of the following conditions
implies (directional) quasi-normality and MSCQ at $\bar x$:
(i) the weak efficiency of $\xb$ for $\varphi^{\lambda}$ from \eqref{eq:MultiobjProbQN}, (ii) SOSCQN from \eqref{eq : SSOSCMQN} (SOSCdirQN).
\end{theorem}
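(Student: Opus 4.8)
The plan is to derive all statements directly from the general machinery of Section~\ref{sec:NewCQ}, specialized to the multi-index $\mli^Q$. First I would invoke Corollary~\ref{Pro : QusinormEquiv}, which guarantees that for an ortho-disjunctive program Assumption~\ref{Ass} for $\mli^Q$ holds at every feasible point $\bar x$; this is the only structural fact needed, and it is precisely what makes the results of Section~\ref{sec:NewCQ} applicable with $\mli=\mli^Q$. Recall also that, since $\Lambda^0(\bar x;u)\subset\Lambda^0(\bar x)$ for every $u$ and the sequential condition in Definition~\ref{def:DirCQ}(ii) is more restrictive than the one in Definition~\ref{def:DirCQ}(i), (non-directional) quasi-normality implies directional quasi-normality, which in turn implies MSCQ by Theorem~\ref{The : MainDQNtoMS} (equivalently, Proposition~\ref{The : CQsToMS}(i)). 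Thus in each case it suffices to produce quasi-normality or directional quasi-normality at $\bar x$.

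For part~(i), I would apply Theorem~\ref{Pro : PQNormVSMax} with $\mli=\mli^Q$. For this multi-index the map $\varphi^{\bar\lambda}$ from \eqref{eq:varphi} is exactly the one displayed in \eqref{eq:MultiobjProbQN}, so the hypothesis in~(i) states precisely that $\bar x$ is a local weak efficient solution of $\max_{x\in\R^n}\varphi^{\bar\lambda}(x)$ for every $\bar\lambda\in\Lambda^0(\bar x)$. Theorem~\ref{Pro : PQNormVSMax} then yields PQ-normality w.r.t.\ $\mli^Q$ at $\bar x$, i.e.\ quasi-normality, hence directional quasi-normality and MSCQ as noted above.

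For part~(ii), the point to observe is that for $\mli=\mli^Q$ one has $I_{\mli^Q}(\bar\lambda)=I(\bar\lambda)$, and for $i\in I(\bar\lambda)$ the requirement $\ip{\bar\lambda_i}{\nabla F_i(\bar x)u}=0$ is equivalent to $\nabla F_i(\bar x)u=0$, since $\bar\lambda_i\neq 0$. Consequently condition \eqref{eq : SSOSCMQN} (SOSCQN) is literally condition \eqref{eq : SSOSCMPQN} of Corollary~\ref{Cor : PQNormSC} for $\mli^Q$, and SOSCdirQN is literally SOSCdirPQN($\mli^Q$) from Proposition~\ref{Cor: SOSCdirPQN}. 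Hence Corollary~\ref{Cor : PQNormSC} yields quasi-normality (whence directional quasi-normality and MSCQ) from SOSCQN, while Proposition~\ref{Cor: SOSCdirPQN} yields directional quasi-normality, hence MSCQ, from SOSCdirQN.

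I do not expect a genuine obstacle here: the heavy lifting was already done in establishing Corollary~\ref{Pro : QusinormEquiv} and the general results of Section~\ref{sec:NewCQ}. The only points needing a word of care are the trivial reduction $\ip{\bar\lambda_i}{\nabla F_i(\bar x)u}=0\Leftrightarrow\nabla F_i(\bar x)u=0$ for $i\in I(\bar\lambda)$, and the implication ``quasi-normality $\Rightarrow$ directional quasi-normality'' together with the reading of the ``(directional)'' qualifier — condition~(i) and SOSCQN give the non-directional conclusion (and a fortiori the directional one), while SOSCdirQN gives the directional conclusion directly. Both are immediate from the definitions.
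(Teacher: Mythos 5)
Your proposal is correct and follows exactly the route the paper intends: the theorem is a direct specialization of Theorem \ref{Pro : PQNormVSMax}, Corollary \ref{Cor : PQNormSC} and Proposition \ref{Cor: SOSCdirPQN} to $\mli=\mli^Q$, made applicable by Corollary \ref{Pro : QusinormEquiv} (which supplies Assumption \ref{Ass} for $\mli^Q$), together with the observations that SOSCQN/SOSCdirQN are literally SOSCPQN($\mli^Q$)/SOSCdirPQN($\mli^Q$) and that quasi-normality implies its directional version. The only implicit hypothesis worth flagging is that part (ii) tacitly assumes $F$ twice differentiable at $\bar x$, as the paper does when writing down \eqref{eq : SSOSCMQN}.
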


Let us briefly comment on the importance of the previous theorem
(together with Corollary \ref{Pro : QusinormEquiv}).
First, consider only the statement that
the (simplified form of) quasi-normality \eqref{eq : QNsimplified}
implies MSCQ and hence M-stationarity and exactness
of the penalty function \eqref{eq:PenaltyFuncOrthoDisj} at local minimizers.
For MPCCs, we thus recover the following results:
\cite[Theorem 3.3]{KaS 10} (quasi-normality implies M-stationarity),
\cite[Lemma 4.3 and 4.4]{KaS 10} (pseudo-normality implies MSCQ),
\cite[Theorem 4.5 and Corollary 4.6]{KaS 10} (pseudo-normality implies exactness of $l_1$ and $l_\infty$ penalty function),
as well as \cite[Theorem 3.1]{YeZ 14} (quasi-normality implies MSCQ). Similarly, for MPVCs we recover and improve
\cite[Theorem 3.1]{QHYM18} (pseudo-normality implies exactness of the penalty function)
and the fact that quasi-normality implies M-stationarity,
which is not stated in the paper,
but follows directly from \cite[Theorem 2.1 and Definition 2.3]{QHYM18}.
Moreover, to the best of our knowledge,
pseudo- and quasi-normality were not yet introduced for MPSCs, MPrCCs and MPrPCs
and all our results are hence new when applied to these problem classes.

Second, we also provide verifiable sufficient conditions
for quasi-normality, together with sufficient conditions
for pseudo-normality (higher-order conditions, polynomiality of $F$) from Section 4,
which enhances the applicability of our results.

Finally, we open a path for a refined analysis using   directional quasi-normality
as well as all the corresponding sufficient conditions (SOSCdirQN  etc.).

In order to illuminate and compare our results with the literature,
we conclude this section with application to MPCCs.
The same exercise could be executed for other classes \eqref{eq : MPsDCs} (b)-(e).
Recall that, omitting standard equality and inequality constraints, an MPCC is given as
\begin{equation*}
 \min_{x \in \R^n} f(x) \quad  \st \quad G_i(x), H_i(x) \geq 0,\,
 G_i(x) H_i(x) = 0, \ i \in V.
\end{equation*}
The constraints of MPCCs fit the general setting $F(x) \in \Gamma$ with
$F(x) := (G_i(x),H_i(x))_{i \in V}$, and
$\Gamma := \Gamma_{\text{CC}}^{\vert V \vert}$,
where $\Gamma_{\text{CC}} = (\R_+\times \{0\}) \cup (\{0\}\times \R_+)$
is clearly ortho-disjunctive.
As we mentioned, $\Gamma$ itself is also ortho-disjunctive,
but we choose to rather keep the ``outer'' product as well,
noting that the impact is only visible at the penalty function.
We point out that the standard approach to MPCCs is to consider
$\Gamma := -\Gamma_{\text{CC}}^{\vert V \vert}$ and $F(x) := (-G_i(x),-H_i(x))_{i \in V}$ in order to work with nonnegative signs of certain multipliers,
while in our case we obtain the opposite sign restrictions.

A simple computation yields that for $(G,H) \in \Gamma_{\text{CC}}$ we have
\[
N_{\Gamma_{\text{CC}}}(G,H) =
\begin{cases}
 \{0\} \times \R & \textrm{ if } G > 0 = H,\\
 \R \times \{0\} & \textrm{ if } G = 0 < H,\\
 (\R_- \times \R_-) \cup (\{0\} \times \R) \cup (\R \times \{0\})
 & \textrm{ if } G = 0 = H.
\end{cases}
\]
Hence, denoting
\begin{eqnarray*}
I^{+0}(\xb) & := & \{i \in V \mv G_i(\xb) > 0 = H_i(\xb)\},\\
I^{0+}(\xb) & := & \{i \in V \mv G_i(\xb) = 0 < H_i(\xb)\},\\
I^{00}(\xb) & := & \{i \in V \mv G_i(\xb) = 0 = H_i(\xb)\}
\end{eqnarray*}
for some feasible point $\xb$, we conclude that
$\lambda=(\lambda_i^G,\lambda_i^H)_{i \in V} \in N_{\Gamma_{\text{CC}}^{\vert V \vert}}(F(\xb))$
if and only if
\begin{equation} \label{eq: M-stat}
 \lambda_i^G = 0, \, i \in I^{+0}(\xb), \ \lambda_i^H = 0, \, i \in I^{0+}(\xb) \textrm{ and }
 \lambda_i^G,\lambda_i^H \leq 0 \textrm{ or } \lambda_i^G \lambda_i^H = 0, \, i \in I^{00}(\xb).
\end{equation}
Consequently, Corollary \ref{Pro : QusinormEquiv} yields that $\xb$ satisfies quasi-normality provided there is no nonzero
$\bar \lambda=(\bar \lambda_i^G,\bar \lambda_i^H)_{i \in V}$ fulfilling
\[0 = \sum_{i \in V}
\big( \bar \lambda_i^G \nabla G_i(\xb) + \bar \lambda_i^H \nabla H_i(\xb) \big)\]
together with \eqref{eq: M-stat} such that there exists
a sequence $\{x^k\} \to \bar x$ with
\begin{equation*}
\bar \lambda^G_i G_i(x^k) > 0 \text{ if } \ \bar\lambda_i^G \neq 0 \ \textrm{ and } \
\bar \lambda^H_i H_i(x^k) > 0 \text{ if } \ \bar\lambda_i^H \neq 0, \ (k\in\N).
\end{equation*}
On the other hand, $\xb$ is M-stationary provided there exists
$\bar \lambda=(\bar \lambda_i^G,\bar \lambda_i^H)_{i \in V}$ satisfying \eqref{eq: M-stat} and
\[0 = \nabla f(\xb) + \sum_{i \in V}
\big( \bar \lambda_i^G \nabla G_i(\xb)+ \bar \lambda_i^H \nabla H_i(\xb) \big).\]

Moreover, using first the $l_1$-norm to handle the ``outer'' product
we get
\[P_\alpha(x) = f(x)+\alpha\sum_{i \in V}
\dist_{\Gamma_{\text{CC}}}(G_i(x),H_i(x)).\]
Next, using the $l_\infty$-norm for the ``inner'' product,
for arbitrary $(G,H) \in \R^2$ we have
$\dist_{\Gamma_{\text{CC}}}(G,H) = \vert \min\{G,H\} \vert$.
Note that this agrees with the corresponding expression from \eqref{eq:PenaltyFuncOrthoDisj},
which reads as
$\min \big\{\max \{(G)^-, \vert H \vert \}, \max \{\vert G \vert, (H)^- \}\big\}$.
Consequently, we obtain
\begin{equation*}
 P_\alpha(x) = f(x)+\alpha\sum_{i \in V} \vert \min\{G_i(x),H_i(x)\} \vert.
\end{equation*}

\section*{Conclusion}
Building on recently developed directional techniques from variational analysis, 
this paper contains a complex and self-contained study of the metric subregularity constraint qualification (MSCQ)
for broad classes of nonconvex optimization problems including, most importantly, disjunctive programs.
Our findings reveal a common denominator of several prominent sufficient conditions
for MSCQ occurring in the literature. Thus, our study improves understanding of these seemingly independent approaches and provides an additional insight.
Moreover, it offers a wider spectrum of sufficient conditions for MSCQ, including point-based ones,
and consequently also improves existing sufficient conditions.
Furthermore, by introducing the new notion of ortho-disjunctive programs we established an appropriate framework for a unified study
of several nonconvex optimization problems such as mathematical programs with complementarity, vanishing or switching constraints.
These ortho-disjunctive programs hence provide an intriguing area for future research.

%
\section*{Acknowledgments}
\noindent
The authors also thank two anonymous referees for their comments which helped improve the presentation of the material.

\section*{Dedication}
\noindent
The authors would like to dedicate this paper to Helmut Gfrerer in honor of his 60th birthday.

\section*{Funding}
\noindent
The research of the first author was supported by the Austrian Science Fund (FWF) under grant P29190-N32.
The work on the revised version was supported by the FWF grant P32832-N.
The research of the second author was supported 
by the Grant Agency of the Czech Republic (Grant No. 18-04145S).
Part of this work was done while the second author was visiting  McGill University, partially supported by H2020-MSCA-RISE project GEMCLIME-2020 under GA No. 681228.
The research of the third author was supported by an NSERC discovery grant.


%



\begin{thebibliography}{99}

\bibitem{AchKa08}{Achtziger, W., Kanzow, C.}: {Mathematical programs with vanishing constraints: Optimality
conditions and constraint qualifications}. Math. Program. \textbf{114}, 69--99 (2008)
%
\bibitem{AB16}{Adam, L., Branda, M.}: Nonlinear Chance Constrained Problems: Optimality Conditions, Regularization and Solvers. J. Optim. Theory Appl. \textbf{170}(2), 419--436 (2016)
%
\bibitem{BYZ19}{Bai, K., Ye, J.~J., Zhang, J.}: {Directional quasi-/pseudo-normality as sufficient conditions for metric subregularity}.
SIAM J. Optim., \textbf{29}(4), 2625--2649 (2019)

\bibitem{BeGfr16b}{Benko, M., Gfrerer, H.}: {On estimating the regular normal cone to constraint systems
and stationary conditions}. Optimization \textbf{66}, 61--92 (2017)

\bibitem{BeGfr16d}{Benko, M., Gfrerer, H.}: {New verifiable stationary concepts for a class of mathematical programs
with disjunctive constraints}. Optimization \textbf{67}, 1--23 (2018)
%
\bibitem{Be17}{Benko, M.}: {Numerical methods for mathematical programs with disjunctive constraints}. PhD-thesis, Univ. Linz (2016).
%
\bibitem{BeGfrOut18}{Benko, M., Gfrerer, H., Outrata, J.~V.}:
{Calculus for directional limiting normal cones and subdifferentials}.
Set-Valued Var. Anal., \textbf{27}(3), 713--745 (2019)
%

\bibitem{BeGfrMor18}{Benko, M., Gfrerer, H., Mordukhovich, B.~S.}:
  {Characterizations of tilt-stable minimizers in second-order cone programming}. SIAM J. Optim., \textbf{29}(4), 3100--3130 (2019)
%
\bibitem{BeGfrOut18b}{Benko, M., Gfrerer, H., Outrata, J.~V.},
{Stability analysis for parameterized variational systems with implicit constraints}.
Set-Valued Var. Anal., \textbf{28}, 167--193 (2019)
%
\bibitem{BeO 02} {Bertsekas, D., Ozdaglar, A.~E.}: {Pseudonormality and a Lagrange multiplier theory for constrained optimization}.
J. Optim. Theory Appl. \textbf{114}, 287--343 (2002)
%
\bibitem{Bi06}{Bigi, G.}: {On sufficient second order optimality
conditions in multiobjective optimization}. Math. Meth. Oper. Res. \textbf{63}, 77--85 (2006)
%
\bibitem{BBCS18}{Branda, M., Bucher, M., \v{C}ervinka, M., Schwartz, A.}: {Convergence of a Scholtes-type regularization method for cardinality-constrained optimization problems
with an application in sparse robust portfolio optimization}. Comput. Optim. Appl. \textbf{70}(2), 503--530 (2018)
%
\bibitem{BS18}{Bucher, M., Schwartz, A.}: {Second-Order Optimality Conditions and Improved Convergence Results for Regularization Methods for Cardinality-Constrained Optimization Problems}. J. Optim. Theory Appl. \textbf{178}(2), 383--410 (2018)
%
\bibitem{BKS16} {Burdakov, O.~P., Kanzow, C., Schwartz, A.}:
{Mathematical Programs with Cardinality Constraints: Reformulation by Complementarity-Type Conditions and a Regularization Method}.
SIAM J. Optim. \textbf{26}(1), 397--425 (2016)
%
\bibitem{Bu91} {Burke, J.~V.}:
{Calmness and exact penalization}.
SIAM J. Control and Optim. \textbf{29}, 493--497 (1991)

\bibitem{Bu91.2}{Burke, J.~V.}: {An exact penalization viewpoint of constrained optimization}.
SIAM J. Control Optim. \textbf{29}, 968--998 (1991)

\bibitem{CerKanSchw16}{\v{C}ervinka, M., Kanzow, C., Schwartz, A.}:
{Constraint qualifications and optimality conditions for optimization problems with cardinality constraints}. Math. Program. \textbf{160}, 353--377 (2016)
%
\bibitem{Cla 83}{Clarke, F.~H.}:
{Optimization and Nonsmooth Analysis}. John Wiley \& Sons, New York (1983)
%
\bibitem{De02}{Dempe, S.}: {Foundations of Bilevel Programming}.
Nonconvex Optimization and Its Applications \textbf{61}, Kluwer Academic Publishers, Dordrecht, The Netherlands (2002)
%
\bibitem{DoRo04}{Dontchev, A.~L., Rockafellar, R.~T.}: {Regularity and conditioning of solution mappings in variatonal analysis},
Set-Valued Anal. \textbf{12}, 79--109 (2004)
%
\bibitem{DoRo14}{Dontchev, A.~L., Rockafellar, R.~T.}: {Implicit Functions and Solution Mappings}. Springer, Heidelberg (2014)
%
\bibitem{Ehr 05}{Ehrgott, M.}: {Multicriteria Optimization.}
Springer, Berline Heidelber (2005)
%
\bibitem{FabHenKruOut10}{Fabian, M., Henrion, R., Kruger, A.~Y., Outrata, J.~V.}: {Error bounds: necessary and sufficient conditions}. Set-Valued Var. Anal. \textbf{18}, 121--149 (2010)
%
\bibitem{FleKanOut07}{Flegel, M.~L., Kanzow, C., Outrata, J.~V.}: {Optimality Conditions for Disjunctive Programs
with Application to Mathematical Programs with Equilibrium Constraints}. Set-Valued Anal. \textbf{15}, 139--162 (2007)
%
\bibitem{Gfr13a}{Gfrerer, H.}: {On directional metric regularity, subregularity and optimality conditions for nonsmooth mathematical programs}. Set-Valued Var. Anal. \textbf{21}, 151--176 (2013)
%
\bibitem{Gfr13b}{Gfrerer, H.}: {On directional metric subregularity and second-order optimality conditions
for a class of nonsmooth mathematical programs}. SIAM J. Optim. \textbf{23}, 632--665 (2013)
%
\bibitem{Gfr14a}{Gfrerer, H.}: {Optimality conditions for disjunctive programs based on generalized differentiation with
application to mathematical programs with equilibrium constraints}. SIAM J. Optim. \textbf{24}, 898--931 (2014)
%
\bibitem{Gfr14b}{Gfrerer, H.}: {On metric pseudo-(sub)regularity of multifunctions and optimality conditions for degenerated mathematical programs}.
Set-Valued Var. Anal., \textbf{22}, 79--115 (2014)
%
\bibitem{GfrKla16}{Gfrerer, H., Klatte, D.}:
{Lipschitz and H\"{o}lder stability of optimization problems and generalized equations}.
Math. Program. \textbf{158}, 35--75 (2016)
%
\bibitem{GO3}{Gfrerer, H., Outrata, J.~V.}: {On Lipschitzian properties of implicit multifunctions}.
SIAM J. Optim. \textbf{26}, 2160--2189 (2016)
%
\bibitem{GfrMo17}{Gfrerer, H., Mordukhovich, B.~S.}: {Second-order variational analysis of parametric constraint and variational systems}.
SIAM J. Optim. \textbf{29}, 423--453 (2019)
%
\bibitem{GuoYeZhang13}{Guo, L., Ye, J.~J., Zhang, J.}:
{Mathematical Programs with Geometric Constraints in Banach Spaces: Enhanced Optimality, Exact Penalty, and Sensitivity}. SIAM J. Optim. \textbf{23}, 2295--2319 (2013)
%
\bibitem{HenJouOut02}{Henrion, R., Jourani, A., Outrata, J.~V.}:
{On the calmness of a class of multifunctions}.
SIAM J. Optim. \textbf{13}, 603--618 (2002)
%
\bibitem{HeO 05} {Henrion, R., Outrata, J.~V.}:
{Calmness of constraint systems with applications}.
Math. Program. \textbf{104} 437--464 (2005)
%
\bibitem{HeO 08} {Henrion, R., Outrata, J.~V.}:
{On calculating the normal cone to a finite union of convex polyhedra}.
Optimization \textbf{57}(1), 57--78 (2008)
%

%
\bibitem{HUL 01}{Hiriart-Urrruty, J.-B., Lemar\'echal, C.}: {Fundamentals of Convex Analysis}.
Grundlehren Text Editions, Springer, Berlin, Heidelberg (2001)
%
\bibitem{Ho09}{Hoheisel, T.}: {Mathematical programs with vanishing constraints}.
PhD-thesis, Julius--Maximilians--Universit\"at W\"urzburg (2009)
%
\bibitem{HoKaOut10}{Hoheisel, T., Kanzow, C., Outrata, J.~V.}: {Exact penalty results for mathematical
programs with vanishing constraints}. Nonlinear Anal. \textbf{72}, 2514--2526 (2010)
%
\bibitem{HKS 12}{Hoheisel, T., Kanzow, C., Schwartz, A.}: {Mathematical programs with vanishing constraints: A new regularization approach with strong convergence properties}. Optimization \textbf{61}(6), 619-636 (2012)
%
\bibitem{HKS 13} {Hoheisel, T., Kanzow, C., Schwartz, A.}: {Theoretical and numerical comparison of relaxation methods for mathematical programs with complementarity constraints}. Math. Program. \textbf{137}, 137--257 (2013)
%
\bibitem{QHYM18}{Hu, Q., Zhang, H., Chen, Y., Tang, M.}: {An Improved Exact
Penalty Result for Mathematical Programs with Vanishing Constraints}.
Journal of Adv. in Appl. Math. \textbf{3}, 43--49 (2018)
%
%
\bibitem{Io17}{Ioffe, A.~D.}: {Variational Analysis of Regular Mappings}.
Springer Monographs in Mathematics, Springer, Cham (2017)
%
\bibitem{IoOut08}{Ioffe, A.~D., Outrata, J.~V.}: {On metric and calmness qualification conditions in subdifferential calculus}. Set-Valued Anal. \textbf{16}, 199--227 (2018)
%
%
\bibitem{Jah 11}{Jahn, J.}: {Vector Optimization: Theory, Applications, and Extensions.}
Springer, Berlin Heidelberg (2011) 
%
\bibitem{KaS 10}{Kanzow,~C., Schwartz,~A.}:
{Mathematical Programs with Equilibrium Constraints: Enhanced Fritz John-conditions, New Constraint Qualifications, and Improved Exact Penalty Results}.
SIAM J. Optim. \textbf{20}, 2730--2753 (2010)
%
%
\bibitem{KK02a} {Klatte, D., Kummer, B.}: Constrained minima and Lipschitzian penalties in metric spaces.
SIAM J. Optim. \textbf{13}, 619--633 (2002)
%
\bibitem{KK02} {Klatte, D., Kummer, B.}: {Nonsmooth Equations in Optimization: Regularity, Calculus, Methods and Applications}. Nonconvex Optimization and Its Applications Vol.~\textbf{60},
Kluwer, Dordrecht/Boston/London (2002)
%
\bibitem{Kru15a}{Kruger, A.~Y.}: {Error bounds and metric subregularity}.
Optimization \textbf{64}, 49--79 (2015)
%
\bibitem{LuPaRa96}{Luo, Z.~Q., Pang, J.~S., Ralph, D.}: {Mathematical Programs with Equilibrium
Constraints}. Cambridge University Press, Cambridge, New York, Melbourne (1996)
%
%

\bibitem{Me19}{Mehlitz, P.}: {On the linear independence constraint qualification in disjunctive programming}. Optimization, 37 p., (2019)
\texttt{https://doi.org/10.1080/02331934.2019.1679811}

\bibitem{Me18}{Mehlitz, P.}: {Stationarity conditions and constraint qualifications for mathematical programs with switching constraints}.
Math. Program. \textbf{181}(1), 149--186 (2020)


%
\bibitem{Mo18}{Mordukhovich, B.~S.}: {Variational analysis and applications}. Springer,
Cham (2018)
%
\bibitem{OutKoZo98}{Outrata, J.~V., Ko\v{c}vara, M., Zowe, J.}: {Nonsmooth Approach to Optimization
Problems with Equilibrium Constraints}. Nonconvex Optimization and its Applications,
Kluwer Academic Publishers, Dordrecht, The Netherlands (1998)
%
\bibitem{Penot10}{Penot, J.~P.}: {Error bounds, calmness and their applications in nonsmooth analysis}.
In Nonlinear Analysis and Optimization II: Optimization, A. Leizarowitz, B. S. Mordukhovich, I. Shafrir, and A. J. Zaslavski, eds., vol. 514 of Contemp. Math., AMS, Providence, RI, 225--248 (2010)
\bibitem{Rob81}{Robinson, S.~M.}: {Some continuity
properties of polyhedral multifunctions}. Math. Program. Studies \textbf{14}, 206--214 (1981)
%
\bibitem{RoW 98} {Rockafellar, R.~T., Wets, R.~J.-B.}: {Variational Analysis}. A Series of Comprehensive Studies in Mathematics Vol.~\textbf{317}, Springer, Berlin, Heidelberg (1998)
%
\bibitem{Sch 04}{Scholtes, S.}: {Nonconvex structures in nonlinear programming}.
  Oper. Res. \textbf{52}, 368-383 (2004)
%
\bibitem{St03}{Stein, O.}: Bi-level Strategies in Semi-Infinite Programming. Nonconvex Optimization and Its Applications Vol.~\textbf{71},
Kluwer, Boston/Dordrecht/London (2003)
%
\bibitem{WuY 03}{Wu, Z., Ye, J.~J.}: {First-order and second-order conditions for error bounds}.
SIAM J. Optim. \textbf{14}(3), 621--645 (2003)
%
\bibitem{YeZ 14}{Ye, J.~J., Zhang, J.}: {Enhanced Karush-Kuhn-Tucker conditions for mathematical programs with equilibrium constraints}.
J. Optim. Theory Appl. \textbf{164}, 777--794 (2014)

\end{thebibliography}
\end{document}